\documentclass[pdflatex,sn-mathphys-num]{sn-jnl}% Math and Physical Sciences Numbered Reference Style
\usepackage{graphicx}%
\usepackage{multirow}%
\usepackage{amsmath,amssymb,amsfonts}%
\usepackage{amsthm}%
\usepackage{mathrsfs}%
\usepackage[title]{appendix}%
\usepackage{xcolor}%
\usepackage{textcomp}%
\usepackage{manyfoot}%
\usepackage{booktabs}%
\usepackage{algorithm}%
\usepackage{algorithmicx}%
\usepackage{algpseudocode}%
\usepackage{listings}%
\theoremstyle{thmstyleone}%
\newtheorem{theorem}{Theorem}%  meant for continuous numbers
\newtheorem{proposition}[theorem]{Proposition}% 

\theoremstyle{thmstyletwo}%
\newtheorem{remark}{Remark}%

\theoremstyle{thmstylethree}%
\newtheorem{definition}{Definition}%

\theoremstyle{thmstyleone}%
\newtheorem{lemma}{Lemma}%
\newtheorem{assumption}{Assumption} %

\begin{document}

\title[A Proximal Gradient Framework for Composite Multiobjective Optimization]{A Proximal Gradient Framework for Composite Multiobjective Optimization on Riemannian Manifolds}

\author*{\fnm{Kangming} \sur{Chen}}\email{kangming@tmu.ac.jp}

\affil{\orgdiv{Faculty of Economics and Business Administration},
\orgname{Tokyo Metropolitan University},
\orgaddress{\city{Tokyo}, \postcode{192-0397}, \country{Japan}}}

\abstract{
% This paper proposes a Riemannian Multiobjective Proximal Gradient Method (RMPGM) for unconstrained composite optimization on manifolds. Avoiding prior scalarization, RMPGM  achieves global convergence to Pareto stationary points with an $\mathcal{O}(1/k)$ rate. We further develop two variants: an inexact RMPGM accommodating controlled computational errors, and a trust-region framework enjoying an $\mathcal{O}(\epsilon^{-2})$ iteration complexity. Numerical experiments on representative manifold problems demonstrate the efficiency and robustness of the proposed methods.
This paper proposes a Riemannian Multiobjective Proximal Gradient Method (RMPGM) for composite optimization problems on manifolds. Unlike scalarization-based approaches, the proposed framework directly handles vector-valued objectives and establishes global convergence to Pareto stationary points, together with an $\mathcal{O}(1/k)$ convergence rate. We further develop two variants to enhance practicality and performance: an inexact RMPGM that allows controlled inexactness in solving subproblems, and a trust-region RMPGM that adaptively adjusts the penalty parameter and achieves an $\mathcal{O}(\epsilon^{-2}) $iteration complexity. Numerical experiments demonstrate that the proposed methods are consistently outperform subgradient-based baselines.
}

\keywords{Riemannian optimization, Multiobjective optimization, Proximal gradient method, Trust-region method, Pareto stationarity, Composite optimization}
\pacs[MSC Classification]{ 90C30, 65K05, 49M37, 53B21}

\maketitle

%\section*{Update Notes}

%This version changes two important parts: fonts and bibliography.

%\textbf{Fonts}: Due to the newtx package updates, we changed the font settings for all the templates of ElegantLaTeX. Under \hologo{XeLaTeX}, we use \lstinline{fontspec} package to set the font to TeX Gyre Terms/Heros. 

%\textbf{Bibliography}: The bib file is no longer \lstinline{wpref.bib}, it's same with ElegantBook bibfile, \lstinline{reference.bib}. Besides, we use biblatex/biber rather than bibtex to handler bibliography, you can use bibstyle and citestyle to set the styles. For convenience, we offer a \lstinline{bibend} option, which can take values of \lstinline{biber} (default) and \lstinline{bibtex}, please refer to Bibliography section and biblatex package document for more information.

\section{Introduction}

Riemannian optimization offers a natural framework for problems involving geometric constraints. Translating constrained Euclidean formulations into unconstrained problems on manifolds generally allows algorithms to better leverage the curvature and symmetry of the search space. Substantial progress has been made in developing manifold optimization algorithms, including gradient-based methods, trust-region methods, and Newton-type methods~\cite{AbsMahSep2008, boumal2023intromanifolds, absil_trust-region_2007, chen2026riemannian, sato_riemannian_2021, weber2023riemannian}.

In parallel, multiobjective optimization studies problems involving multiple conflicting objectives and seeks Pareto optimal solutions. Descent-based methods play an important role in approximating the Pareto frontier. 
Notable developments include the multiobjective steepest descent method~\cite{fliege2000steepest, grana_drummond_steepest_2005}, inexact descent methods~\cite{fukuda_inexact_2013}. For composite problems in Euclidean spaces, proximal gradient methods~\cite{tanabe_proximal_2019} have proven effective, yielding established convergence properties, including a proven $\mathcal{O}(1/k)$ rate~\cite{tanabe_convergence_2023}.

Motivated by applications in machine learning and signal processing involving multiple competing objectives, such as sparse principal component analysis and low-rank matrix recovery on matrix manifolds~\cite{journee2010generalized, vandereycken2013low}, there is growing interest in extending composite optimization techniques to the Riemannian multiobjective setting.
The multiobjective steepest descent~\cite{bento_unconstrained_2012, bento_inexact_2013}, subgradient~\cite{bento_subgradient_2013}, conjugate gradient~\cite{chen2025nonlinear, najafiMultiobjectiveConjugateGradient2023}, proximal point methods~\cite{bento_proximal_2018} and other methods~\cite{eslami2023trust} have been extended to the Riemannian manifolds. For composite Riemannian multiobjective optimization problems of the form $F_i=f_i+g_i$ with nonsmooth regularizers $g_i$, existing approaches are still relatively limited and are mainly based on proximal point--type methods~\cite{bento_proximal_2018,bento_locally_2018,upadhyay_inexact_2025}. Such methods require solving implicit proximal subproblems at each iteration, which may become expensive in large-scale settings. For single-objective problems, Riemannian proximal gradient methods avoid this bottleneck by linearizing only the smooth component~\cite{chen2020proximal, huang_riemannian_2022}. This approach has been further extended to include inexact evaluations~\cite{huang_inexact_2023} and trust-region mechanisms~\cite{zhao_proximal_2023}. 
Adapting these techniques to the multiobjective case, however, introduces specific structural difficulties. 
Unlike the Euclidean setting, the nonsmooth subgradients belong to different tangent spaces along the iterates, preventing direct compactness and limiting arguments.
Moreover, the multiobjective proximal subproblem possesses a minimax structure, which complicates both the derivation of optimality conditions and the design of implementable inexact criteria.

Our main contributions are summarized as follows:
\begin{itemize}
    \item \textbf{Riemannian multiobjective proximal gradient framework:} We propose RMPGM for composite multiobjective optimization on Riemannian manifolds. We establish global convergence to Pareto stationary points together with an $\mathcal{O}(1/k)$ convergence rate.
    
    \item \textbf{Inexact variant:} We develop an inexact version of RMPGM based on a relative residual condition. Global convergence is preserved while allowing approximate solutions of the proximal subproblems.
    
    \item \textbf{Trust-region extension:} We introduce a trust-region type variant (TR-RMPGM) that adaptively updates the proximal parameter and does not require prior knowledge of the Lipschitz constant. We prove global convergence and an $\mathcal{O}(\epsilon^{-2})$ iteration complexity bound.

\end{itemize}

The remainder of this paper is organized as follows.
Section~\ref{sect: Preliminaries} reviews essential background on multiobjective optimization and Riemannian geometry. Section~\ref{sec: MRPGD method} presents the RMPGM algorithm and its global convergence analysis under $L$-retraction-smoothness, together with the $\mathcal{O}(1/k)$ convergence rate under retraction-convexity. Section~\ref{sec: Inexact method} develops the inexact scheme. Section~\ref{sec: trust region} introduces the trust-region variant with its convergence and complexity analysis. Numerical experiments are reported in Section~\ref{sec: expreriment}, and conclusions are drawn in Section~\ref{sec: Conclusion}.

\section{Preliminaries}
\label{sect: Preliminaries}
    In this paper, we consider the following unconstrained multiobjective optimization
    problem:
    \begin{equation}
    \label{eq:prob}
    %\tag{P}
    \begin{aligned}
        \min & \quad F(x) \\
        \mbox{s.t.} & \quad x \in \mathcal{M} ,
    \end{aligned}
    \end{equation}
    where $F \colon \mathcal{M} \rightarrow (\mathbb{R} \cup \{+\infty\})^m$ is a vector-valued function with $F:=\left(F_{1}, \ldots, F_{m}\right)^{\top}$, and $\top$ denotes transpose. We assume that each component $F_{i} \colon \mathcal{M} \rightarrow \mathbb{R} \cup \{+\infty\}$ is defined by
    $$
    F_{i}(x):=f_{i}(x)+g_{i}(x), \quad i=1, \ldots, m,
    $$
    where $f_{i} \colon \mathcal{M} \rightarrow \mathbb{R} \cup \{+\infty\}$ is proper, closed and continuously differentiable, and $g_{i} \colon \mathcal{M} \rightarrow \mathbb{R}  \cup \{+\infty\}$ is proper, closed and     geodesically convex but not necessarily differentiable.

Here we present some essential background on Riemannian manifolds \cite{AbsMahSep2008, boumal2023intromanifolds, sato_riemannian_2021}.
A Riemannian manifold $\mathcal{M}$ is a manifold endowed with a Riemannian metric 
$(\eta_x, \sigma_x) \mapsto \langle\eta_x, \sigma_x\rangle_x \in \mathbb{R}$, where $\eta_x$ and $\sigma_x$ are tangent vectors in the tangent space of $\mathcal{M}$ at $x$.
The tangent space of a manifold $\mathcal{M}$ at $x \in \mathcal{M}$ is denoted as $T_x \mathcal{M}$, 
and the tangent bundle of $\mathcal{M}$ is denoted as $T \mathcal{M}:=\left\{(x, \eta) \mid \eta \in T_x \mathcal{M}, x \in \mathcal{M}\right\}$. The norm of $\eta \in T_x \mathcal{M}$ is defined as $\|\eta\|_x:=\sqrt{\langle\eta, \eta\rangle_x}$. 
For a map $F: \mathcal{M} \rightarrow \mathcal{N}$ between two manifolds $\mathcal{M}$ and $\mathcal{N}, \mathrm{D} F(x): T_x \mathcal{M} \rightarrow T_{F(x)} \mathcal{N}$ denotes the derivative of $F$ at $x \in \mathcal{M}$. 
The Riemannian gradient $\operatorname{grad} f(x)$ of a function $f: \mathcal{M} \rightarrow \mathbb{R}$ at $x \in \mathcal{M}$ is defined as the unique tangent vector at $x$ satisfying $\langle\operatorname{grad} f(x), \eta\rangle_x = \mathrm{D} f(x)[\eta]$ for any $\eta \in T_x \mathcal{M}$.

Now, we define geodesic convexity sets and functions.

\begin{definition}\cite{boumal2023intromanifolds}
    A set $\mathcal{X}$ is called geodesically convex if for any $x, y \in \mathcal{X}$, there is a geodesic $\gamma$ with $\gamma(0)=x, \gamma(1)=y$ and $\gamma(t) \in \mathcal{X}$ for all $t \in[0,1]$.
\end{definition}

\begin{definition}
    A function $h \colon \mathcal{M} \rightarrow \mathbb{R}$ is called geodesically convex, where $\mathcal{S}\subseteq \mathcal{M}$ is a geodesically convex set, if for any $p, q \in \mathcal{M}$, we have $h(\gamma(t)) \leq(1-t) h(p)+t h(q)$ for any $t \in[0,1]$, where $\gamma$ is the geodesic connecting $p$ and $q$.
\end{definition}

In Riemannian optimization, we use retraction to project points from the tangent space of the manifold onto the manifold itself while preserving the underlying Riemannian metric. 

\begin{definition}\cite{sato_riemannian_2021}
    A smooth map  $R \colon T \mathcal{M} \rightarrow \mathcal{M}$ is called a retraction on a smooth manifold $\mathcal{M}$ if  the restriction of $R$  to the tangent space $T_x \mathcal{M}$ at any point $x \in \mathcal{M}$, denoted by $R_x$, satisfies  the following conditions:
    \begin{enumerate}
    \item $R_x\left(0_x\right)=x$,
    \item $\mathrm{D} R_x\left(0_x\right)=\operatorname{id}_{T_x \mathcal{M}}$ for all $x \in \mathcal{M}$, 
    \end{enumerate}
    where $0_x$ and $\operatorname{id}_{T_x \mathcal{M}}$ are the zero vector of $T_x \mathcal{M}$ and identity map in $T_x \mathcal{M}$, respectively.
\end{definition}
Retractions map tangent vectors back to the manifold $\mathcal{M}$, providing a computationally efficient alternative to the exponential map for maintaining iterate feasibility. They generalize the standard iterative update to the Riemannian setting as
$$x_{k+1}=R_{x_k}(t_k d _k),\quad \text{for }  k = 0,1,2, \ldots,$$
where $d_k$ is a descent direction and $t_k$ is a step size. 

Another key concept in Riemannian optimization is vector transport, which ensures computations preserve the manifold's intrinsic geometry. By using vector transport, algorithms can efficiently navigate the curved space of the manifold and compute geometric quantities necessary for convergence to optimal solutions.
\begin{definition}\cite{sato_riemannian_2021}
A map $\mathcal{T} \colon T \mathcal{M} \oplus T \mathcal{M} \rightarrow T \mathcal{M}$ is called a vector transport on $\mathcal{M}$ if it satisfies the following conditions, where $T \mathcal{M} \oplus T \mathcal{M}:=\left\{(\xi, d) \mid \xi, d \in T_x \mathcal{M}, x \in \mathcal{M}\right\}$ is the Whitney sum:
\begin{enumerate}
\item There exists a retraction $R$ on $\mathcal{M}$ such that $\mathcal{T}_d(\xi) \in T_{R_x(d)} \mathcal{M}$ for all $x \in \mathcal{M}$ and $\xi, d \in T_x \mathcal{M}$.
\item For any $x \in \mathcal{M}$ and $\xi \in T_x \mathcal{M}, \mathcal{T}_{0_x}(\xi)=\xi$ holds, where $0_x$ is the zero vector in $T_x \mathcal{M}$, i.e., $\mathcal{T}_{0_x}$ is the identity map.
\item For any $a, b \in \mathbb{R}, x \in \mathcal{M}$, and $\xi, d, \zeta \in T_x \mathcal{M}, \mathcal{T}_d(a \xi+b \zeta)=a \mathcal{T}_d(\xi)+b \mathcal{T}_d(\zeta)$ holds, i.e., $\mathcal{T}_d$ is a linear map from $T_x \mathcal{M}$ to $T_{R_x(d)} \mathcal{M}$.
\end{enumerate}
\end{definition}

Note that the map \( \mathcal{T} \) defined by \( \mathcal{T}_d(\xi) := \mathrm{P}_{\gamma_{x, d}}^{1 \leftarrow 0}(\xi) \) is also a vector transport, where \( \mathrm{P}_{\gamma_{x, d}}^{1 \leftarrow 0} \) denotes the parallel translation along the geodesic \( \gamma_{x, d}(t) := \operatorname{Exp}_x(t d) \), which connects \( \gamma_{x, d}(0) = x \) and \( \gamma_{x, d}(1) = \operatorname{Exp}_x(d) \), with the exponential map \( \operatorname{Exp} \) serving as a retraction.

In this paper, we consider the standard Pareto partial order induced by the non-negative orthant $\mathbb{R}_+^m$. For any two vectors $u, v \in \mathbb{R}^m$, we write $u \preceq v$ if $v - u \in \mathbb{R}_+^m$, and $u \prec v$ if $v - u \in \operatorname{int}(\mathbb{R}_+^m)$.
Based on this partial order, we formally define the optimality conditions directly on the Riemannian manifold $\mathcal{M}$. A point $x^* \in \mathcal{M}$ is called \emph{Pareto optimal} for $F$ if there exists no $x \in \mathcal{M}$ such that $F(x) \preceq F(x^*)$ and $F(x) \neq F(x^*)$. Similarly, $x^* \in \mathcal{M}$ is a \emph{weakly Pareto optimal} point if there is no $x \in \mathcal{M}$ satisfying $F(x) \prec F(x^*)$. It is well known that every Pareto optimal point is weakly Pareto optimal, but the converse is generally false. The set of all (weakly) Pareto optimal values constitutes the (weakly) Pareto frontier. 

A necessary condition for weak Pareto optimality is Pareto stationarity. In a manner analogous to  \cite{hosseini_line_2018} and \cite{bento_subgradient_2013}, we formally define Pareto stationarity on Riemannian manifolds as follows. A point $x^* \in \mathcal{M}$ is called \emph{Pareto stationary} (or \emph{critical}) if
\begin{equation}\label{stationary}
    \max_{i=1,\ldots,m} F_{i}^{\circ}(x^* ; \eta) \geq 0 \quad \text{for all } \eta \in T_{x^*}\mathcal{M},
\end{equation}
where $F_i^\circ(x^*; \eta)$ denotes the Clarke generalized directional derivative~\cite{clarke1990optimization, hosseini2011generalized}, which satisfies the relation $F_i^\circ(x^*; \eta) = \max \{ \langle \xi, \eta \rangle_{x^*} : \xi \in \partial F_i(x^*) \}$, and $\partial F_i(x^*)$ is naturally the Clarke subdifferential at $x^*$.

\begin{assumption}\label{assume0}
        There exists an open neighborhood $\mathcal{U} \supseteq \omega_{x_0}$ such that, for each $i=1,\ldots,m$, the function $g_i$ is locally Lipschitz on $\mathcal{U}$.
    \end{assumption}

    Under Assumption~\ref{assume0}, each composite objective $F_i = f_i + g_i$ is locally Lipschitz on $\mathcal{U}$. Consequently, the Clarke generalized directional derivative is well-defined and satisfies the exact sum rule:
    \begin{equation}\label{eq:clarke-sum-rule}
        F_i^\circ(x;\eta) = \langle \operatorname{grad} f_i(x), \eta \rangle_x + g_i^\circ(x;\eta), \quad \forall x \in \mathcal{U},\ \forall \eta \in T_x\mathcal{M}.
    \end{equation}
    Note that this assumption is automatically satisfied when $g_i$ is a finite-valued convex function on $\mathcal{M}$.

\section{Proximal gradient method for multiobjective
optimization on Riemannian manifolds}
\label{sec: MRPGD method}

    The proximal gradient method for multiobjective optimization on Riemannian manifolds is formalized in Algorithm \ref{alg:VRPGD}.
    It is worth noting that when the step size $t_k$ equals 1, we have $x_{k+1} = R_{x_k}(\eta_k)$. Consequently, the RMPGM update is obtained by solving the following subproblem in the tangent space $T_{x_k}\mathcal{M}$:
    \begin{equation}
    \label{eq:pg_step}
    \eta_k =\underset{\eta \in T_{x_k}\mathcal{M}}{\operatorname{argmin}} \left( \psi_{x_k}(\eta) + \frac{\tilde{L}}{2} \big\|\eta \big\|_{x_k}^{2} \right), 
    \end{equation}
    where $\tilde{L} > L$, $ L:=\max_{i=1,\ldots,m} L^i$, and $\psi_{x_k} \colon T_{x_k}\mathcal{M} \rightarrow \mathbb{R}$ is the multiobjective first-order approximation as follows
    \begin{equation}\label{psi}
    \psi_{x_k}(\eta) := \max_{i=1, \ldots, m} \Big( \left\langle\operatorname{grad} f_i\left(x_k\right), \eta\right\rangle_{x_k} + g_{i}(R_{x_k}(\eta))-g_{i}(x_k) \Big).
    \end{equation}
    For convenience, we define the subproblem objective function at $x \in \mathcal{M}$ as
    \begin{equation}\label{subproblem}
    p_{x}(\eta) := \psi_{x}(\eta) + \frac{\tilde{L}}{2} \big\|\eta \big\|_{x}^{2}.
    \end{equation}
    It follows immediately from \eqref{eq:pg_step} that $p_{x_k}(\eta_k)\leq p_{x_k}(0) = 0$.

    \begin{algorithm}[H]
      \caption{Riemannian Multiobjective Proximal Gradient Method (RMPGM)}
      \label{alg:VRPGD}
        \noindent Step 0. Let $x_0 \in \mathcal{M}$, $\tilde{L} > L$, and initialize $k \leftarrow 0$.\\
        Step 1. Compute the descent direction $\eta_k \in T_{x_k}\mathcal{M}$ by solving \eqref{eq:pg_step}.\\
        Step 2. Set $x_{k+1}=R_{x_k}(\eta_k)$, update $k \leftarrow k+1$, and return to Step 1.
    \end{algorithm}

\begin{remark}
    % \textbf{Practical implementation: Backtracking for $\tilde L$.}
Since the smoothness constant $L$ is usually unknown, Algorithm~\ref{alg:VRPGD} can be implemented with a standard backtracking strategy. Starting from an initial estimate $\tilde L_{-1}>0$, we increase $\tilde L_k$ geometrically until
    \[
        F_i(R_{x_k}(\eta_k)) \le F_i(x_k) - \frac{\tilde L_k - L}{2}\|\eta_k\|_{x_k}^2, \qquad \forall\, i = 1,\dots,m,
    \]
holds. By the $L$-retraction-smoothness of each $f_i$, the procedure terminates finitely once $\tilde L_k>L$. Moreover, the convergence analysis extends directly since Lemma~\ref{descent} matches the backtracking condition and ${\tilde L_k}$ remains uniformly bounded~\cite{huang_riemannian_2022,tanabe_proximal_2019}.
\end{remark}

\subsection{Global convergence analysis}
\label{sec: global convergenve}

% In the following, we adapt proof techniques from \cite{huang_riemannian_2022} to the multiobjective setting.
% First, to prove the global convergence, we need to generalize the $L$-smoothness to the Riemannian setting and define a notion of $L$-retraction-smooth.
% To establish global convergence, we extend the analysis in \cite{huang_riemannian_2022} to the multiobjective setting. In particular, we employ the notion of $L$-retraction-smoothness, which generalizes classical $L$-smoothness to Riemannian manifolds.
To establish global convergence, first, we introduce the notion of $L$-retraction-smoothness, which generalizes classical $L$-smoothness to Riemannian manifolds.
    \begin{definition}[\cite{huang_riemannian_2022}, L-retraction-smooth]
        A function $h: \mathcal{M} \rightarrow \mathbb{R}$ is called $L$-retraction-smooth with respect to a retraction $R$ in $\mathcal{N} \subseteq \mathcal{M}$ if for any $x \in \mathcal{N}$ and any $\mathcal{S}_x \subseteq \mathrm{T}_x \mathcal{M}$ such that $R_x\left(\mathcal{S}_x\right) \subseteq \mathcal{N}$, we have that
        \begin{equation}\label{retraction-smooth}
            h\left(R_x(\eta)\right) \leq h(x)+\langle\operatorname{grad} h(x), \eta\rangle_x+\frac{L}{2}\|\eta\|_x^2, \quad \forall \eta \in \mathcal{S}_x .    
        \end{equation}
    \end{definition}  

    The following assumptions will be used throughout the convergence analysis.
    \begin{assumption}\label{assume1}
        The function $F$ is bounded from below and the sublevel set $\omega_{x_0}  = \{x \in M | F(x) \preceq F(x_0)\}$ is compact.
    \end{assumption}
    \begin{assumption}\label{assume2}
        The function $f_i$ , for all $ i=1, \ldots, m$,  is $L^i$-retraction-smooth with respect to the retraction R in the sublevel set $\omega_{x_0}$ and $ L:=\max_{i=1,\ldots,m} L^i$.
    \end{assumption}

    \begin{lemma}[]\label{descent}
        Suppose Assumption \ref{assume2} holds. Then the sequence $\left\{x_k\right\}$ generated by Algorithm \ref{alg:VRPGD} satisfies
        \begin{equation}\label{Fi}
            F_i\left(x_k\right)-F_i\left(x_{k+1}\right) \geq \beta\left\|\eta_k\right\|_{x_k}^2, \quad i=1, \ldots, m,
        \end{equation}
        
        where $\beta=(\tilde{L}-L) / 2$.
    \end{lemma}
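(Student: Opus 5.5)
Our plan is to combine the retraction-smoothness inequality for each $f_i$ with the optimality of $\eta_k$ in the subproblem \eqref{eq:pg_step}, compared against the trial point $\eta=0$. This is the Riemannian analogue of the Euclidean argument in \cite{tanabe_proximal_2019}. The one feature specific to the manifold setting is that $g_i$ enters $\psi_{x_k}$ through $g_i(R_{x_k}(\eta))$. As a result, the nonsmooth part of $F_i(x_{k+1})$ appears in the subproblem exactly, with no linearization error.

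\textbf{Step 1 (smooth part).} Since $x_{k+1}=R_{x_k}(\eta_k)$, Assumption~\ref{assume2} with \eqref{retraction-smooth} gives, for each $i$,
\[
f_i(x_{k+1}) \le f_i(x_k) + \langle \operatorname{grad} f_i(x_k), \eta_k\rangle_{x_k} + \frac{L^i}{2}\|\eta_k\|_{x_k}^2 \le f_i(x_k) + \langle \operatorname{grad} f_i(x_k), \eta_k\rangle_{x_k} + \frac{L}{2}\|\eta_k\|_{x_k}^2 .
\]
Adding $g_i(x_{k+1})=g_i(R_{x_k}(\eta_k))$ to both sides yields
\[
F_i(x_{k+1}) - F_i(x_k) \le \langle \operatorname{grad} f_i(x_k),\eta_k\rangle_{x_k} + g_i(R_{x_k}(\eta_k)) - g_i(x_k) + \frac{L}{2}\|\eta_k\|^2_{x_k} \le \psi_{x_k}(\eta_k) + \frac{L}{2}\|\eta_k\|^2_{x_k},
\]
where the last inequality holds because the $i$-th term is bounded by the maximum in \eqref{psi}.

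\textbf{Step 2 (optimality).} From $p_{x_k}(\eta_k)\le p_{x_k}(0)=0$ we get $\psi_{x_k}(\eta_k) \le -\frac{\tilde L}{2}\|\eta_k\|^2_{x_k}$. Substituting this into Step 1 gives
\[
F_i(x_{k+1})-F_i(x_k) \le -\frac{\tilde L - L}{2}\|\eta_k\|^2_{x_k},
\]
which is \eqref{Fi} with $\beta=(\tilde L-L)/2$.

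\textbf{Main obstacle.} The delicate point is justifying the use of \eqref{retraction-smooth}. Retraction-smoothness is assumed only on $\omega_{x_0}$, so we need $x_k\in\omega_{x_0}$ and the set $\mathcal{S}_{x_k}=\{\eta_k\}$ to satisfy $R_{x_k}(\eta_k)\in\omega_{x_0}$. We handle this by induction on $k$. For $k=0$ we have $x_0\in\omega_{x_0}$. Assuming $x_k\in\omega_{x_0}$, we must show $x_{k+1}\in\omega_{x_0}$ before invoking the smoothness bound at $x_{k+1}$.

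To close the induction we intend to read Definition \ref{retraction-smooth} as in \cite{huang_riemannian_2022}, where the inequality holds whenever the base point and its image lie in the region. The monotonicity then follows as in that reference. Concretely, the right-hand side computation shows that $F_i(x_{k+1})\le F_i(x_k)\le F_i(x_0)$ once the bound applies, so $x_{k+1}\in\omega_{x_0}$. If this appears circular, the fallback is to assume the smoothness inequality holds on a neighborhood containing the retraction images of the iterates, as is implicitly done in the Euclidean analogue.

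With membership in $\omega_{x_0}$ established, all iterates stay in $\omega_{x_0}$, and the two-line estimate above completes the proof.
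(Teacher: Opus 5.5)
Your Steps 1 and 2 are exactly the paper's argument: it derives the componentwise bound $\langle \operatorname{grad} f_i(x_k),\eta_k\rangle_{x_k}+g_i(R_{x_k}(\eta_k))-g_i(x_k)\le -\frac{\tilde L}{2}\|\eta_k\|_{x_k}^2$ from $p_{x_k}(\eta_k)\le p_{x_k}(0)=0$ and combines it with the retraction-smoothness inequality for $f_i$ at $(x_k,\eta_k)$, so your proof is correct at the paper's level of rigor. The paper never checks that $R_{x_k}(\eta_k)\in\omega_{x_0}$, so your ``main obstacle'' paragraph goes beyond it; however, your induction as phrased is genuinely circular, because the bound is used to place $x_{k+1}$ in the region where the bound is valid, and only your fallback (assuming the smoothness inequality on a region known in advance to contain every $R_{x_k}(\eta_k)$) actually closes it.
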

    \begin{proof}
        Considering the definition of $\eta_k$, it follows that $p_{x_k}(\eta_k) \leq 0$. This implies
        $$
        \left\langle\operatorname{grad} f_i\left(x_k\right), \eta_k\right\rangle_{x_k} + g_{i}(R_{x_k}(\eta_k))-g_{i}(x_k) \leq -\frac{\tilde{L}}{2}\left\|\eta_k\right\|_{x_k}^2, \quad i=1, \ldots, m,
        $$
        and combined with the $L$-retraction-smooth of $f_i$, we have
        $$
        \begin{aligned}
        F_i\left(x_k\right) & =f_i\left(x_k\right)+g_i\left(x_k\right) \\
        &\geq f_i\left(x_k\right)+\left\langle\operatorname{grad} f_i\left(x_k\right), \eta_k\right\rangle_{x_k}+g_i\left(R_{x_k}\left(\eta_k\right)\right)+\frac{\tilde{L}}{2}\left\|\eta_k\right\|_{x_k}^2 \\
        & \geq \frac{\tilde{L}-L}{2}\left\|\eta_k\right\|_{x_k}^2+f_i\left(R_{x_k}\left(\eta_k\right)\right)+g_i\left(R_{x_k}\left(\eta_k\right)\right)\\
        &=F_i\left(x_{k+1}\right)+\frac{\tilde{L}-L}{2}\left\|\eta_k\right\|_{x_k}^2,
        \end{aligned}
        $$
        which completes the proof.
    \end{proof}
\begin{lemma}[\cite{huang_riemannian_2022}, Lemma~2]
\label{lemma2}
Let $\xi$ be a continuous vector field on $\mathcal{M}$. 
For $x \in \mathcal{M}$ and $\eta_x \in T_x\mathcal{M}$, let 
$y = R_x(\eta_x)$. Then the operator
$(\mathrm{D}R_x(\eta_x)^*)^{-1}
:
T_x\mathcal{M}
\to
T_y\mathcal{M}$
is well defined and coincides with the inverse vector transport 
by differentiated retraction, denoted by 
$\mathcal{T}_{\eta_x}^{-\sharp}$ in~\cite{huang_riemannian_2022}. Moreover,
\[
\lim_{\eta_x \to 0}
\left\|
\xi_y
-
(\mathrm{D}R_x(\eta_x)^*)^{-1}\xi_x
\right\|_y
=
0.
\]
\end{lemma}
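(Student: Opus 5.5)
This result is Lemma~2 of \cite{huang_riemannian_2022}, so one option is simply to cite it. For a self-contained argument I would work in a local chart around a fixed point $\bar x$, or equivalently with a smooth local frame. I split the proof into two parts: well-definedness of $(\mathrm{D}R_x(\eta_x)^*)^{-1}$, and the limit statement.

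\textbf{Well-definedness.} The map $(x,\eta_x)\mapsto \mathrm{D}R_x(\eta_x)$ is smooth on $T\mathcal{M}$, since $R$ is smooth. At $\eta_x=0_x$ it equals $\operatorname{id}_{T_x\mathcal{M}}$ by the second retraction axiom. Invertibility of a linear map is an open condition, so there is a neighborhood $\mathcal{V}$ of the zero section on which $\mathrm{D}R_x(\eta_x)\colon T_x\mathcal{M}\to T_y\mathcal{M}$ is invertible. On $\mathcal{V}$ the adjoint $\mathrm{D}R_x(\eta_x)^*\colon T_y\mathcal{M}\to T_x\mathcal{M}$, taken with respect to the metrics at $y$ and $x$, is then also invertible. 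Its inverse maps $T_x\mathcal{M}\to T_y\mathcal{M}$, which is exactly the definition of $\mathcal{T}_{\eta_x}^{-\sharp}$ in \cite{huang_riemannian_2022}. This settles well-definedness for small $\eta_x$, which is the only regime the limit concerns.

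\textbf{The limit.} I would write
\[
\xi_y-(\mathrm{D}R_x(\eta_x)^*)^{-1}\xi_x=(\mathrm{D}R_x(\eta_x)^*)^{-1}\big(\mathrm{D}R_x(\eta_x)^*\xi_y-\xi_x\big).
\]
The norm of the inverse factor is uniformly bounded for $\eta_x$ small. This follows from continuity of $(x,\eta)\mapsto \mathrm{D}R_x(\eta)$ in local coordinates, together with the value $\operatorname{id}$ at $\eta=0$ and the continuity of the metric coefficients $G(\cdot)$. It therefore suffices to show $\mathrm{D}R_x(\eta_x)^*\xi_y-\xi_x\to0$.

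In coordinates, write $J(\eta)$ for the matrix of $\mathrm{D}R_x(\eta)$. Then the adjoint has the coordinate form $\mathrm{D}R_x(\eta)^*=G(x)^{-1}J(\eta)^{\top}G(y)$. As $\eta\to0$ we have $y=R_x(\eta)\to x$ and $J(\eta)\to I$, so $\mathrm{D}R_x(\eta)^*\to I$. Also $\xi_y\to\xi_x$ in coordinates by continuity of the vector field $\xi$. Combining these, the difference tends to $0$ in coordinates. Since $G$ is continuous and positive definite near $x$, coordinate norms and the norms $\|\cdot\|_y$ are uniformly equivalent, and the claim follows.

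\textbf{Main obstacle.} The delicate point is the bookkeeping across different tangent spaces and metrics. A vector in $T_y\mathcal{M}$ cannot be compared with one in $T_x\mathcal{M}$ directly, so the argument must go through a chart. The chart is also needed to justify that the adjoint depends continuously on $(x,\eta)$, which requires smoothness of both $R$ and the metric tensor. I would handle this by fixing one chart around $x$ and expressing every object in it.
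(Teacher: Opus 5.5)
Your argument is correct. It is also a genuinely different route, because the paper gives no proof of this lemma: it imports it verbatim from Huang and Wei (their Lemma~2). Your chart computation needs only the following:
\begin{itemize}
\item the coordinate form $\mathrm{D}R_x(\eta)^*=G(x)^{-1}J^{\top}G(y)$;
\item $J(x,0)=I$;
\item continuity of $G$ and of the coordinate expression of $\xi$;
\item uniform equivalence of coordinate and Riemannian norms near $x$.
\end{itemize}
The factorization through $(\mathrm{D}R_x(\eta_x)^*)^{-1}$ with a uniform operator bound is sound. You could also skip it and pass to the limit directly in the coordinate expression $G(y)^{-1}J^{-\top}G(x)$ applied to $\xi_x$.

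Compared with the bare citation, your approach buys two things.

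First, you correctly restrict well-definedness to a neighborhood of the zero section. The statement as written suggests invertibility for every $\eta_x$, which fails in general: for the exponential map on $\mathbb{S}^2$, $\mathrm{D}\operatorname{Exp}_x(\eta_x)$ is singular when $\|\eta_x\|_x=\pi$.

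Second, your objects are jointly continuous in $(x,\eta)$ within one chart. So the same argument gives the limit as $(x,\eta_x)\to(\bar x,0_{\bar x})$, together with a uniform bound on the operator norm of $(\mathrm{D}R_x(\eta_x)^*)^{-1}$. This uniform version, with moving base points $x_{k_j}\to x_*$, is what the paper actually uses in Theorem~\ref{thm:convergence} and in the inexact convergence theorem. The lemma as stated only covers a fixed $x$. It would be worth stating the joint version explicitly, since your proof already delivers it.
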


\begin{lemma}\label{lemma:sq_sum}
    Let $\{\eta_k\}$ be generated by Algorithm \ref{alg:VRPGD} and suppose that $\{F_i(x_k)\}$ is bounded from below for all $i = 1,\ldots,m$. Then, it follows that 
    \begin{equation}
        \sum_{k=0}^{\infty}\left\|\eta_k\right\|_{x_k}^2<\infty.
    \end{equation}
\end{lemma}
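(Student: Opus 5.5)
The proof is a telescoping argument built on the sufficient decrease estimate of Lemma~\ref{descent}. Fix any index, say $i=1$ (any single component suffices). Lemma~\ref{descent} gives, for every $k\ge 0$,
\[
\beta\|\eta_k\|_{x_k}^2 \le F_1(x_k)-F_1(x_{k+1}), \qquad \beta=\frac{\tilde L-L}{2}>0,
\]
where $\beta>0$ because $\tilde L>L$.

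Summing this inequality from $k=0$ to $N$ telescopes the right-hand side:
\[
\beta\sum_{k=0}^{N}\|\eta_k\|_{x_k}^2 \le F_1(x_0)-F_1(x_{N+1}).
\]
Since $\{F_1(x_k)\}$ is bounded from below by some constant $c$, the right-hand side is at most $F_1(x_0)-c$, uniformly in $N$.

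The partial sums $\sum_{k=0}^{N}\|\eta_k\|_{x_k}^2$ are nonnegative and nondecreasing, and they are bounded above by $(F_1(x_0)-c)/\beta$. Letting $N\to\infty$ therefore gives $\sum_{k=0}^{\infty}\|\eta_k\|_{x_k}^2\le (F_1(x_0)-c)/\beta<\infty$, which is the claim.

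One step needs care: justifying that Lemma~\ref{descent} applies at every iteration. Its proof uses Assumption~\ref{assume2}, i.e.\ the retraction-smoothness inequality \eqref{retraction-smooth} along the step $\eta_k$. That inequality is only guaranteed when $x_k$ and $R_{x_k}(\eta_k)$ lie in $\omega_{x_0}$. I would handle this by induction, assuming Assumption~\ref{assume2} as Lemma~\ref{descent} does.
\begin{itemize}
\item Suppose $x_k\in\omega_{x_0}$.
\item Monotone decrease of every $F_i$ up to step $k$ (from the lemma applied at earlier steps) keeps $F(x_{k+1})\preceq F(x_0)$, provided the smoothness inequality is valid on the step taken.
\item Hence $x_{k+1}\in\omega_{x_0}$, closing the induction.
\end{itemize}
This yields that all iterates remain in the sublevel set. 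It also shows that, under Assumption~\ref{assume1}, the lower-boundedness hypothesis holds automatically. The rest of the argument is routine.
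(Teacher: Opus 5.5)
Your telescoping argument (apply Lemma~\ref{descent} to a single component, sum from $k=0$ to $N$, bound $F_i(x_{N+1})$ from below, and let $N\to\infty$) is exactly the paper's proof. The extra induction is not needed for the lemma as stated, since the paper simply invokes Lemma~\ref{descent}. As written it also does not close: using the retraction-smoothness inequality along $\eta_k$ already requires $R_{x_k}(\eta_k)\in\omega_{x_0}$, which is what you set out to conclude, so it merely restates the assumption already implicit in Lemma~\ref{descent}.
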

\begin{proof}
    From Lemma \ref{descent}, it follows that
    \begin{equation*}
        F_i\left(x_{k+1}\right) \leq F_i\left(x_k\right) - \beta\left\|\eta_k\right\|_{x_k}^2, \quad i=1, \ldots, m.
    \end{equation*}
    Since each $\{F_i(x_k)\}$ is bounded from below, there exists a constant $\bar{F}_i$ such that $F_i(x_k) \geq \bar{F}_i$ for all $k$. Summing the above inequality from $k = 0$ to $K$, we obtain
    \begin{equation*}
        F_i\left(x_{K+1}\right) \leq F_i\left(x_0\right) - \beta\sum_{k=0}^{K}\left\|\eta_k\right\|_{x_k}^2, \quad i=1, \ldots, m.
    \end{equation*}
    Rearranging terms gives
    \begin{equation*}
        \sum_{k=0}^{K}\left\|\eta_k\right\|_{x_k}^2 \leq \frac{1}{\beta}(F_i\left(x_0\right) - F_i(x_{K+1})) \leq \frac{1}{\beta}(F_i\left(x_0\right) - \bar{F}_i).
    \end{equation*}
    Taking the limit as $K \to \infty$, we conclude that $\sum_{k=0}^{\infty}\left\|\eta_k\right\|_{x_k}^2<\infty$.
\end{proof}

    Now we can give a global convergence analysis of Algorithm \ref{alg:VRPGD}.

\begin{theorem}%\cite{huang_riemannian_2022}
\label{thm:convergence}
    Under Assumptions \ref{assume0}, \ref{assume1}, and \ref{assume2}, the following statements hold:

    1. If $\eta_k=0$, then $x_k$ is a Pareto stationary point. 
    
    2. The sequence $\left\{x_k\right\}$ has at least one accumulation point. 
    
    3. Let $x_*$ be any accumulation point of the sequence $\left\{x_k\right\}$. Then $x_*$ is a Pareto stationary point. Furthermore, Algorithm \ref{alg:VRPGD} returns $x_k$ satisfying $\left\|\eta_k\right\|_{x_k} \leq \epsilon$ in at most $ \min_{i= 1, \ldots, m} \{\left(F_i\left(x_0\right)-F_i\left(x_*\right)\right) /\left(\beta \epsilon^2\right)$\}  iterations.
\end{theorem}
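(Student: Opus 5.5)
The plan is to prove the three parts in order, with part 3 carrying most of the weight.

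\emph{Part 1.} Suppose $\eta_k=0$. Then $0$ minimizes $p_{x_k}$ over $T_{x_k}\mathcal{M}$, so $p_{x_k}(t\eta)\ge 0$ for every $\eta\in T_{x_k}\mathcal{M}$ and every $t>0$. Expanding, this gives
\[
\max_i\Big(t\langle \operatorname{grad} f_i(x_k),\eta\rangle_{x_k}+g_i(R_{x_k}(t\eta))-g_i(x_k)\Big)+\tfrac{\tilde L}{2}t^2\|\eta\|_{x_k}^2\ge 0 .
\]
Next divide by $t$ and take $\limsup_{t\downarrow 0}$. Since $t\mapsto R_{x_k}(t\eta)$ is a smooth curve with initial velocity $\eta$, and $g_i$ is locally Lipschitz near $x_k$ (Assumption~\ref{assume0}, as $x_k\in\omega_{x_0}$ by Lemma~\ref{descent}), the difference quotient of $g_i$ along this curve is bounded above by $g_i^\circ(x_k;\eta)$. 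This uses the chart representation of the Clarke derivative. Finally, the sum rule \eqref{eq:clarke-sum-rule} yields $\max_i F_i^\circ(x_k;\eta)\ge 0$ for all $\eta$, which is exactly \eqref{stationary}.

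\emph{Part 2.} By Lemma~\ref{descent}, $F(x_{k+1})\preceq F(x_k)\preceq F(x_0)$, so every iterate lies in $\omega_{x_0}$. This set is compact by Assumption~\ref{assume1}, so an accumulation point exists.

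\emph{Part 3.} Since $F$ is bounded below, Lemma~\ref{lemma:sq_sum} gives $\|\eta_k\|_{x_k}\to0$. Fix a subsequence $x_{k_j}\to x_*$; then $x_{k_j+1}=R_{x_{k_j}}(\eta_{k_j})\to x_*$ as well. The idea is to pass the optimality of $\eta_k$ to the limit. I would try to avoid writing out the subdifferential optimality system of the minimax subproblem, because the subgradients of $g_i$ then sit at $R_{x_k}(\eta_k)$ in varying tangent spaces; that is where Lemma~\ref{lemma2} would be needed.

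The more direct route is a variational comparison. For any $\eta\in T_{x_*}\mathcal{M}$ and small $t>0$, take a continuous extension $\zeta$ of $t\eta$, i.e.\ a continuous vector field with $\zeta(x_*)=t\eta$. Optimality of $\eta_{k}$ gives $p_{x_{k}}(\zeta(x_{k}))\ge p_{x_{k}}(\eta_{k})$. We know $p_{x_k}(\eta_k)\le 0$, and I want to show $\liminf_j p_{x_{k_j}}(\eta_{k_j})\ge 0$. This should follow from
\[
p_{x_k}(\eta_k)\ge \max_i\big(F_i(x_{k+1})-F_i(x_k)\big)+\tfrac{\tilde L-L}{2}\|\eta_k\|^2,
\]
which comes from retraction-smoothness as in Lemma~\ref{descent}, together with convergence of $F_i(x_k)$: the $F_i(x_k)$ are monotone and bounded below, so $F_i(x_{k+1})-F_i(x_k)\to0$. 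Hence $p_{x_{k_j}}(\eta_{k_j})\to 0$.

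On the other side, $p_{x_{k_j}}(\zeta(x_{k_j}))\to p_{x_*}(t\eta)$ by continuity of $\operatorname{grad} f_i$, of $R$, and of $g_i$ near $x_*$ (local Lipschitz continuity). Combining the two limits gives $p_{x_*}(t\eta)\ge0$ for all $t$, and the argument of Part 1 then gives stationarity of $x_*$.

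The main obstacle is the continuity of $g_i$ at $x_*$ and at $R_{x_*}(t\eta)$. I will handle it with Assumption~\ref{assume0}, restricting to $t$ small enough that $R_{x_*}(t\eta)\in\mathcal U$.

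For the complexity bound, suppose $\|\eta_k\|>\epsilon$ for $k=0,\dots,K-1$. Summing Lemma~\ref{descent} gives, for each $i$,
\[
K\beta\epsilon^2<F_i(x_0)-F_i(x_K)\le F_i(x_0)-F_i(x_*),
\]
where the last inequality holds because $F_i(x_k)$ decreases to $F_i(x_*)$, by monotonicity and lower semicontinuity along the subsequence. Taking the minimum over $i$ gives the stated bound.
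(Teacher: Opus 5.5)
Your proof is correct. Parts~1, 2 and the iteration bound essentially match the paper's argument; the difference is in how you show that accumulation points are Pareto stationary.

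\textbf{The paper's route.} It writes the optimality system of the minimax subproblem with multipliers $\lambda^{(k_j)}\in\Delta_m$ and subgradients $\zeta_{i,x_{k_j+1}}\in\partial g_i(x_{k_j+1})$. It then inverts $\mathrm{D}R_{x_{k_j}}(\eta_{k_j})^*$ and uses Lemma~\ref{lemma2} to show that a convex combination $v_{k_j}$ of elements of $\partial F_i(x_{k_j+1})$ tends to zero. Finally it transports an arbitrary direction into $T_{x_{k_j+1}}\mathcal M$ and passes to the limit using bounded Clarke subdifferentials and upper semicontinuity of $F_i^\circ$ on $T\mathcal M$.

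\textbf{Your route.} You use a zeroth-order value-function argument, in the spirit of the optimal-value argument used in Euclidean multiobjective proximal gradient analyses. The squeeze $\max_i(F_i(x_{k+1})-F_i(x_k))\le p_{x_k}(\eta_k)\le 0$ forces $p_{x_k}(\eta_k)\to 0$. You then compare $\eta_{k_j}$ with $\zeta(x_{k_j})$, using joint continuity of $(x,\xi)\mapsto p_x(\xi)$ near $(x_*,t\eta)$. This continuity is valid because $x_*\in\omega_{x_0}\subseteq\mathcal U$ and $R_{x_*}(t\eta)\in\mathcal U$ for small $t$. The comparison gives $p_{x_*}(t\eta)\ge 0$ for all small $t>0$, after which Part~1 applies verbatim.

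\textbf{What each buys.} Your argument is shorter and sidesteps exactly the difficulty the paper highlights, namely subgradients living in varying tangent spaces. It needs no Clarke chain rule for $g_i\circ R_x$, no invertibility of the differentiated retraction, no Lemma~\ref{lemma2}, no upper semicontinuity of $F_i^\circ$, and not even $\|\eta_k\|_{x_k}\to 0$. In exchange, it uses that $\eta_k$ actually minimizes $p_{x_k}$, at least over a ball of uniform radius around $0_{x_k}$. The paper only needs the first-order condition $0\in\partial p_{x_k}(\eta_k)$, which matters because $p_{x_k}$ need not be convex for a general retraction. The paper's route also produces an explicit approximate-KKT residual $v_{k_j}$, a template it reuses for the inexact and trust-region variants.
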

\begin{proof}
    Recall that for every $k$ the vector $\eta_k$ minimizes
    \[
        p_{x_k}(\eta)=\psi_{x_k}(\eta)+\frac{\tilde L}{2}\|\eta\|_{x_k}^2,
    \]
    and $p_{x_k}(0)=0$.

    1. Assume that $\eta_k=0$. Since $0$ is a minimizer of $p_{x_k}$, for every $d\in T_{x_k}\mathcal M$ and every sufficiently small $t>0$ we have
    \[
        0=p_{x_k}(0)\le p_{x_k}(td)
        =\max_{i=1,\ldots,m}\Big(\big\langle \operatorname{grad} f_i(x_k),td\big\rangle_{x_k}
        +g_i(R_{x_k}(td))-g_i(x_k)\Big)
        +\frac{\tilde L}{2}t^2\|d\|_{x_k}^2.
    \]
    Dividing by $t>0$, taking the limit superior as $t\downarrow 0$, and using the Clarke sum rule \eqref{eq:clarke-sum-rule}~\cite{hosseini_line_2018}, we obtain
    \[
        0\le \max_{i=1,\ldots,m}\Big(\big\langle \operatorname{grad} f_i(x_k),d\big\rangle_{x_k}+g_i^{\circ}(x_k;d)\Big)
        =\max_{i=1,\ldots,m}F_i^{\circ}(x_k;d).
    \]
    % \red{(Note: $g_i^\circ(x_k; d)$ is defined as $\limsup_{t\downarrow 0, y\to x_k} (g_i(R_y(t d_y)) - g_i(y))/t$, where $d_y$ denotes a smooth extension of $d$ around $x_k$; the limit superior of both sides preserves the inequality.)}
    % Since $d\in T_{x_k}\mathcal M$ is arbitrary, \eqref{stationary} holds and therefore $x_k$ is a Pareto stationary point.
    Here, $g_i^\circ(x_k; d)$ is defined as $\limsup_{t\downarrow 0, y\to x_k} (g_i(R_y(t d_y)) - g_i(y))/t$, where $d_y$ denotes a smooth extension of $d$ in a neighborhood of $x_k$. Taking the limit superior on both sides preserves the inequality. Given that $d\in T_{x_k}\mathcal M$ is arbitrary, condition \eqref{stationary} holds, which establishes $x_k$ as a Pareto stationary point.
    
    2. By Lemma \ref{descent}, $F_i(x_{k+1})\le F_i(x_k)$ for every $i=1,\ldots,m$. Hence
    \[
        F(x_k)\preceq F(x_0)\qquad \text{for all }k,
    \]
    and consequently $x_k\in \omega_{x_0}$ for all $k$. Assumption \ref{assume1} states that $\omega_{x_0}$ is compact, so the sequence $\{x_k\}$ has at least one accumulation point.

    3. Lemma~\ref{descent} implies that, for every $i$,
    \[
    F_i(x_{k+1})
    \le
    F_i(x_k)-\beta\|\eta_k\|_{x_k}^2,
    \qquad
    \beta=\frac{\tilde L-L}{2}>0.
    \]
    % Therefore, the update $x_{k+1}=R_{x_k}(\eta_k)$ satisfies the sufficient descent condition without backtracking. In particular, $x_{k+1}$ is the retraction image of $\eta_k$.

    Let $x_*$ be any accumulation point of $\{x_k\}$ and choose a subsequence $\{x_{k_j}\}$ such that
    \[
        x_{k_j}\to x_* \qquad \text{as } j\to\infty.
    \]
    % We prove that $x_*$ is Pareto stationary using a multiobjective KKT analysis inspired by \cite{huang_riemannian_2022}.
    
    The subproblem $p_{x_k}(\eta)=\max_{i=1,\dots,m}\varphi_i(x_k,\eta)+\frac{\tilde L}{2}\|\eta\|_{x_k}^2$ involves the maximum of  $\varphi_i(x_k,\eta):=\langle\operatorname{grad} f_i(x_k),\eta\rangle_{x_k}+g_i(R_{x_k}(\eta))-g_i(x_k)$.  By the Clarke subdifferential inclusion for the pointwise maximum of locally Lipschitz functions~\cite[Proposition~2.3.12]{clarke1990optimization}, together with the Riemannian chain rule for the subdifferential of $g_i\circ R_{x_k}$~\cite{hosseini2011generalized}, the necessary optimality condition $0\in\partial p_{x_{k_j}}( \eta_{k_j})$ yields the existence of multipliers $\lambda^{(k_j)} \in \Delta_m$   and subgradients $\zeta_{i, x_{k_j+1}} \in \partial g_i(x_{k_j+1})$ such that
    \begin{equation}\label{eq:kkt-optimality}
        \sum_{i=1}^m \lambda_i^{(k_j)} \operatorname{grad} f_i(x_{k_j}) + \tilde{L} \eta_{k_j} + \sum_{i=1}^m \lambda_i^{(k_j)} \operatorname{D} R_{x_{k_j}}(\eta_{k_j})^* [\zeta_{i, x_{k_j+1}}] = 0,
    \end{equation}
    where $\operatorname{D} R_{x_{k_j}}(\eta_{k_j})^*\colon T_{x_{k_j+1}}\mathcal M\to T_{x_{k_j}}\mathcal M$ is the adjoint of the differential of the retraction, and complementary slackness holds: $\lambda_i^{(k_j)}>0$ only for those indices $i$ achieving the maximum in $\psi_{x_{k_j}}(\eta_{k_j})$.

    Now define
    \begin{equation}\label{eq:pareto-vector}
    \begin{aligned}
        v_{k_j} &:= \sum_{i=1}^m \lambda_i^{(k_j)} \operatorname{grad} f_i(x_{k_j+1}) + \sum_{i=1}^m \lambda_i^{(k_j)} \zeta_{i, x_{k_j+1}}.
    \end{aligned}
    \end{equation}
    By Assumption~\ref{assume0}, $\operatorname{grad} f_i(x_{k_j+1})+\zeta_{i,x_{k_j+1}}\in\partial F_i(x_{k_j+1})$, so $v_{k_j}$ is a convex combination of subgradients.
    
    From the KKT condition \eqref{eq:kkt-optimality} and the invertibility of $\operatorname{D} R_{x_{k_j}}(\eta_{k_j})^*$ for sufficiently large $j$, which follows from $\|\eta_{k_j}\|_{x_{k_j}}\to0$ and the fact that $\operatorname{D} R_{x_{k_j}}(0_{x_{k_j}})=\operatorname{id}$, guaranteeing that $\operatorname{D} R_{x_{k_j}}(\eta_{k_j})$ is a linear isomorphism for all large $j$ in a totally retractive neighborhood; see Lemma~\ref{lemma2}, then 
    \[
        \sum_{i=1}^m \lambda_i^{(k_j)} \zeta_{i, x_{k_j+1}} = - (\operatorname{D} R_{x_{k_j}}(\eta_{k_j})^{*})^{-1} \Bigl( \sum_{i=1}^m \lambda_i^{(k_j)} \operatorname{grad} f_i(x_{k_j}) + \tilde{L} \eta_{k_j} \Bigr).
    \]

    Substituting this expression into $v_{k_j}$ and applying Lemma~\ref{lemma2}, together with $\|\eta_{k_j}\|_{x_{k_j}}\to 0$ from Lemma~\ref{lemma:sq_sum}, yields
    \begin{equation}\label{eq:v-converges}
        \lim_{j\to\infty}\|v_{k_j}\|_{x_{k_j+1}}=0.
    \end{equation}
% \red{Here we use the continuity of $\operatorname{grad} f_i$ and the fact that
% $\operatorname{D}R_{x_{k_j}}(\eta_{k_j})^{*,-1}\to \mathrm{Id}$ as
% $\eta_{k_j}\to0$.}

    Since $\Delta_m$ is compact, there exists a further subsequence (still denoted $\{k_j\}$) and $\lambda^*\in\Delta_m$ such that $\lambda^{(k_j)}\to\lambda^*$. Set $I_*:=\{i:\lambda_i^*>0\}$; then $\sum_{i\in I_*}\lambda_i^*=1$. Writing $s_{i,j}:=\operatorname{grad} f_i(x_{k_j+1})+\zeta_{i,x_{k_j+1}}$, we decompose
    \[
        v_{k_j}= \sum_{i\in I_*}\lambda_i^* s_{i,j} \;+\; \sum_{i\in I_*}(\lambda_i^{(k_j)}-\lambda_i^*)s_{i,j} \;+\; \sum_{i\notin I_*}\lambda_i^{(k_j)} s_{i,j}.
    \]

    Under Assumption~\ref{assume0}, each $F_i$ is locally Lipschitz on a neighborhood of $\omega_{x_0}$, hence the subdifferentials $\partial F_i(x_{k_j+1})$ are uniformly bounded~\cite[Proposition~2.5]{hosseini_line_2018}. Since $\lambda_i^{(k_j)}\to\lambda_i^*$ and $\lambda_i^{(k_j)}\to0$ for $i\notin I_*$, the last two sums converge to zero, yielding
    \begin{equation}\label{eq:fixed-lambda}
        \lim_{j\to\infty} \Bigl\|\sum_{i\in I_*} \lambda_i^* s_{i,j}\Bigr\|_{x_{k_j+1}} = 0.
    \end{equation}

    Given an arbitrary direction $d\in T_{x_*}\mathcal M$, we map it to a tangent vector at $x_{k_j+1}$ utilizing the inverse adjoint of the retraction differential. Specifically, we define $d_{k_j} := (\operatorname{D} R_{x_{k_j}}(\eta_{k_j}))^{*,-1}\bigl[\mathcal{T}_{x_*, x_{k_j}}(d)\bigr]$, where $\mathcal{T}_{x_*, x_{k_j}}\colon T_{x_*}\mathcal M\to T_{x_{k_j}}\mathcal M$ denotes the vector transport induced by the differentiated retraction~\cite{huang_riemannian_2022}. As $x_{k_j}\to x_*$, Lemma~\ref{lemma2} ensures that $\mathcal{T}_{x_*, x_{k_j}}(d)\to d$ in $T\mathcal M$, thereby keeping $\|d_{k_j}\|_{x_{k_j+1}}$ uniformly bounded. Consequently, noting that $s_{i,j}\in\partial F_i(x_{k_j+1})$, we have
    \[
        F_i^\circ(x_{k_j+1}; d_{k_j}) \ge \langle s_{i,j}, d_{k_j}\rangle_{x_{k_j+1}}.
    \]
    Multiplying by $\lambda_i^*$ and summing over $I_*$ yields $\sum_{i\in I_*}\lambda_i^*F_i^\circ(x_{k_j+1};d_{k_j})\ge\langle\sum_{i\in I_*}\lambda_i^* s_{i,j}, d_{k_j}\rangle$. Letting $j\to\infty$ and applying~\eqref{eq:fixed-lambda}, we obtain
    \[
        \liminf_{j\to\infty} \sum_{i\in I_*} \lambda_i^* F_i^\circ(x_{k_j+1}; d_{k_j}) \ge 0.
    \]

    According to~\cite[Theorem~2.2]{hosseini_line_2018}, the Clarke directional derivative $F_i^\circ(\cdot;\cdot)$ is upper semicontinuous on the tangent bundle.
    Since $d_{k_j}$ is constructed via smooth vector transport and converges in $T\mathcal M$ to $d$, the upper semicontinuity of $F_i^\circ$ as a function on the tangent bundle gives
    \[
        \sum_{i\in I_*} \lambda_i^* F_i^\circ(x_*; d) \;\ge\; \limsup_{j\to\infty} \sum_{i\in I_*} \lambda_i^* F_i^\circ(x_{k_j+1}; d_{k_j}) \;\ge\; 0.
    \]

    Since $\sum_{i\in I_*}\lambda_i^*=1$ with $\lambda_i^*\ge0$, the maximum is at least their convex combination, so
    \[
        \max_{i=1,\dots,m} F_i^\circ(x_*; d) \ge \sum_{i\in I_*} \lambda_i^* F_i^\circ(x_*; d) \ge 0.
    \]
    This holds for every $d\in T_{x_*}\mathcal M$, so \eqref{stationary} is satisfied, $x_*$ is a Pareto stationary point.

    Finally, let $\epsilon>0$ and suppose that $\|\eta_k\|_{x_k}>\epsilon$ for $k=0,1,\ldots,K-1$. Then Lemma \ref{descent} implies that, for every $i=1,\ldots,m$,
    \[
        F_i(x_0)-F_i(x_K)
        =\sum_{k=0}^{K-1}\big(F_i(x_k)-F_i(x_{k+1})\big)
        \ge \beta \sum_{k=0}^{K-1}\|\eta_k\|_{x_k}^2
        >K\beta \epsilon^2.
    \]
    Let $\{x_{k_j}\}$ be a subsequence converging to $x_*$. 
    By monotonicity and lower semicontinuity, we have
    % Since $\{F_i(x_k)\}$ is monotonically non-increasing, for any fixed $K$ and any $k_j > K$, we have $F_i(x_K) \ge F_i(x_{k_j})$. Taking the limit inferior as $j\to\infty$ and using the lower semicontinuity of $F_i$,
    \[
        F_i(x_K) \ge \liminf_{j\to\infty} F_i(x_{k_j}) \ge F_i(x_*).
    \]
    Therefore,
    \[
        F_i(x_0)-F_i(x_*)\ge F_i(x_0)-F_i(x_K)>K\beta\epsilon^2,
    \]
    that is,
    \[
        K<\frac{F_i(x_0)-F_i(x_*)}{\beta\epsilon^2},\qquad i=1,\ldots,m.
    \]
    Hence Algorithm \ref{alg:VRPGD} returns an iterate $x_k$ satisfying $\|\eta_k\|_{x_k}\le \epsilon$ in at most
    \[
        \min_{i=1,\ldots,m}\left\{\frac{F_i(x_0)-F_i(x_*)}{\beta\epsilon^2}\right\}
    \]
    iterations.
\end{proof}

\begin{remark}
Part~3 establishes Pareto stationarity directly from the KKT conditions of the tangent-space subproblem, combined with vector transport and the upper semicontinuity of the Clarke generalized directional derivative. This avoids requiring a closed graph property for the Pareto subdifferential mapping, which does not follow automatically from the componentwise subdifferentials.
\end{remark}

\subsection{Convergence rate analysis}
\subsubsection{Global convergence rate analysis using retraction conversity}

For convex problems in Euclidean spaces, the multiobjective proximal gradient method achieves an $O(1/k)$ convergence rate~\cite{tanabe_convergence_2023}. To adopt the same result to the Riemannian setting, we adopt the notion of retraction-convexity introduced in~\cite{huang_riemannian_2022}.

    \begin{definition}{\cite[Definition 2]{huang_riemannian_2022}\label{def:retraction-convex}}
        A function $h: \mathcal{M} \rightarrow \mathbb{R}$ is called retraction-convex with respect to a retraction $R$ in $\mathcal{N} \subseteq \mathcal{M}$ if for any $x \in \mathcal{N}$ and any $\mathcal{S}_x \subseteq \mathrm{T}_x \mathcal{M}$ such that $R_x\left(\mathcal{S}_x\right) \subseteq \mathcal{N}$,
        the composition $q_x=h \circ R_x$ satisfies: for every $\xi \in \mathcal{S}_x$, there exists $\zeta \in \mathrm{T}_x \mathcal{M}$ such that
        \begin{equation}
            q_x(\eta) \geq q_x(\xi)+\langle\zeta, \eta-\xi\rangle_x \quad \forall \eta \in \mathcal{S}_x.
        \end{equation}
        In other words, $q_x$ admits a supporting tangent vector (subgradient) at every point of $\mathcal{S}_x$.
    \end{definition}

    Note that $\zeta=\operatorname{grad} q_x(\xi)$ if $h$ is differentiable; otherwise, $\zeta$ is any Riemannian subgradient of $q_x$ at $\xi$.

\begin{remark}

Retraction-convexity generalizes geodesic convexity by replacing the exponential map with a general retraction $R$. This allows the convergence analysis to accommodate practical retractions that are computationally cheaper than the exponential map. For conditions under which geodesically convex functions inherently satisfy retraction-convexity via second-order retractions, we refer the reader to~\cite{huang_riemannian_2022}.

\end{remark}

The following two assumptions are used in the convergence rate analysis.
    \begin{assumption}\label{assume3}
        % \red{For all $i$, there exists an open set $\Omega \supseteq \omega_{x_0}$ such that the function $f_i$ is $L$-retraction-smooth and retraction-convex with respect to the retraction $R$ in $\Omega$. The function $g_i$ is retraction-convex with respect to the retraction $R$ in $\Omega$.}
        There exists an open neighborhood $\Omega \supseteq \omega_{x_0}$ such that, for all $i=1,\dots,m$, the function $f_i$ is $L$-retraction-smooth, and both $f_i$ and $g_i$ are retraction-convex with respect to the retraction $R$ in $\Omega$.
    \end{assumption}

    \begin{assumption}\cite[Assumption 4]{huang_riemannian_2022}\label{assume4}
        For any $x, y, z \in \Omega$, there exists a constant $\kappa_{\Omega}$ such that
        \begin{equation}\label{ineq:ass4}
            \left|\left\|\xi_x-\eta_x\right\|_x^2-\left\|\zeta_y\right\|_y^2\right| \leq \kappa_{\Omega}\left\|\eta_x\right\|_x^2
        \end{equation}
        where $\eta_x=R_x^{-1}(y), \xi_x=R_x^{-1}(z), \zeta_y=R_y^{-1}(z)$, and $\kappa_{\Omega}$ is a constant.
    \end{assumption}

Assumption~\ref{assume4} holds for the exponential map on geodesically convex regions where the inverse exponential map is well defined; see~\cite[Lemma~2.4]{daniilidis2018self}. 
For general retractions, the estimate  generally holds only in a local neighborhood.

The following lemma is fundamental to the subsequent convergence rate analysis.
    \begin{lemma}
    \label{lemma4.2}
        Let $\eta_x$ be the unique minimizer of $p_{x}(\eta)$ defined in \eqref{subproblem} such that $p_x(0) \geq p_x(\eta_x)$. Suppose Assumption \ref{assume3} holds, and $x, z=R_x(\eta_x)$ are in $\Omega$. Then for any $\xi_x \in \mathrm{T}_x \mathcal{M}$ such that $y:=R_x(\xi_x) \in \Omega$, there exist multipliers $\lambda_i(x) \geq 0$ with $\sum_{i=1}^m \lambda_i(x) = 1$ such that
        $$
        \sum_{i=1}^m \lambda_i(x) F_i(z) \leq \sum_{i=1}^m \lambda_i(x) F_i(y)+\frac{\tilde{L}}{2}\left(\left\|\xi_x\right\|_x^2-\left\|\xi_x-\eta_x\right\|_x^2\right).
        $$
    \end{lemma}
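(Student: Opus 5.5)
We follow the Euclidean three-point argument of \cite{tanabe_convergence_2023}, carried out in $T_x\mathcal M$ as in \cite{huang_riemannian_2022}. The multipliers $\lambda_i(x)$ will be the KKT multipliers of the minimax subproblem at its minimizer $\eta_x$. As in part~3 of Theorem~\ref{thm:convergence}, Clarke's rule for the pointwise maximum gives $\lambda(x)\in\Delta_m$ and vectors $\zeta_i\in\partial(g_i\circ R_x)(\eta_x)$ such that
\[
\sum_{i=1}^m\lambda_i(x)\big(\operatorname{grad} f_i(x)+\zeta_i\big)+\tilde L\,\eta_x=0 .
\]
Complementary slackness also holds: $\lambda_i(x)>0$ only for indices attaining the maximum in $\psi_x(\eta_x)$. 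Under retraction-convexity of $g_i$ in $\Omega$, $g_i\circ R_x$ admits supporting vectors. I will take $\zeta_i$ to be such supporting vectors at $\eta_x$, so that $g_i(R_x(\xi))\ge g_i(z)+\langle\zeta_i,\xi-\eta_x\rangle_x$ for all relevant $\xi$. Justifying this choice is the step I expect to be the main obstacle: one must ensure the Clarke subgradient produced by the optimality condition coincides with a supporting vector. The plan is to argue that $g_i\circ R_x$ is convex on the relevant set, so its Clarke subdifferential equals the convex one.

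\textbf{Step 1: upper bound on $F_i(z)$.} By $L$-retraction-smoothness, $f_i(z)\le f_i(x)+\langle\operatorname{grad} f_i(x),\eta_x\rangle_x+\frac L2\|\eta_x\|_x^2$. Adding $g_i(z)$ and using $L<\tilde L$ gives
\[
F_i(z)\le f_i(x)+\langle\operatorname{grad} f_i(x),\eta_x\rangle_x+g_i(z)+\tfrac{\tilde L}{2}\|\eta_x\|_x^2 .
\]

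\textbf{Step 2: lower bound on $F_i(y)$.} Retraction-convexity of $f_i$, applied at $\xi=0$ with $q_x=f_i\circ R_x$ and $\operatorname{grad} q_x(0)=\operatorname{grad} f_i(x)$, gives $f_i(y)\ge f_i(x)+\langle\operatorname{grad} f_i(x),\xi_x\rangle_x$. Combined with the supporting inequality for $g_i$ from the first paragraph,
\[
F_i(y)\ge f_i(x)+\langle\operatorname{grad} f_i(x),\xi_x\rangle_x+g_i(z)+\langle\zeta_i,\xi_x-\eta_x\rangle_x .
\]

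\textbf{Step 3: combine.} Subtract the bound of Step 2 from that of Step 1 and weight by $\lambda_i(x)$:
\[
\sum_i\lambda_i\big(F_i(z)-F_i(y)\big)\le\Big\langle\sum_i\lambda_i(\operatorname{grad} f_i(x)+\zeta_i),\,\eta_x-\xi_x\Big\rangle_x+\tfrac{\tilde L}{2}\|\eta_x\|_x^2 .
\]
Substituting the KKT relation, the first term becomes $-\tilde L\langle\eta_x,\eta_x-\xi_x\rangle_x$. Then
\[
-\tilde L\langle\eta_x,\eta_x-\xi_x\rangle_x+\tfrac{\tilde L}{2}\|\eta_x\|_x^2=\tfrac{\tilde L}{2}\big(\|\xi_x\|_x^2-\|\xi_x-\eta_x\|_x^2\big),
\]
by expanding $\|\xi_x-\eta_x\|_x^2$. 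This yields the claim.

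The only delicate points are the existence and convexity of the supporting vectors $\zeta_i$, discussed in the first paragraph, and checking that $\eta_x$, $\xi_x$ and $0$ lie in a common set $\mathcal S_x$ with $R_x(\mathcal S_x)\subseteq\Omega$. The latter follows from the hypotheses that $x,y,z\in\Omega$, taking $\mathcal S_x=\{0,\eta_x,\xi_x\}$.
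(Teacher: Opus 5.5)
Your argument is essentially the paper's proof. It uses the same KKT multipliers of the minimax subproblem, the $L$-retraction-smoothness bound at $z$, retraction-convexity of $f_i$ at $0$ and of $g_i\circ R_x$ at $\eta_x$, substitution of the KKT relation, and the polarization identity. Your algebra in Steps 1--3 is correct. The only cosmetic difference is that you take $\zeta_i\in\partial(g_i\circ R_x)(\eta_x)$ directly, whereas the paper writes this vector as $\operatorname{D}R_x(\eta_x)^*[\zeta_{i,z}]$ with $\zeta_{i,z}\in\partial g_i(z)$. That chain-rule representation plays no role in this lemma, so your version is slightly leaner.

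The step that does not work as written is your choice $\mathcal{S}_x=\{0,\eta_x,\xi_x\}$. On a three-point set, retraction-convexity only provides \emph{some} vector satisfying the supporting inequality at those three points. It therefore gives neither of the two things you need:
(i) that the supporting vector of $f_i\circ R_x$ at $0$ is $\operatorname{grad} f_i(x)$, which Step 2 uses;
(ii) that the Clarke subgradient $\zeta_i$ from the optimality condition satisfies $g_i(y)\ge g_i(z)+\langle\zeta_i,\xi_x-\eta_x\rangle_x$.
It is also incompatible with your own plan of identifying Clarke and convex subdifferentials, which needs a neighborhood of $\eta_x$.

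Take instead $\mathcal{S}_x:=R_x^{-1}(\Omega)$. This set is open (by continuity of $R_x$ and openness of $\Omega$) and contains $0,\eta_x,\xi_x$ because $x,z,y\in\Omega$. Then:
(a) A supporting vector of the differentiable function $f_i\circ R_x$ at the interior point $0$ must be its gradient (let $t\to0^{\pm}$ in the supporting inequality along $tv$). This gradient is $\operatorname{grad} f_i(x)$ since $\mathrm{D}R_x(0_x)=\mathrm{id}$.
(b) Let $q_x:=g_i\circ R_x$, with supporting vectors $\zeta_\xi$ at the points $\xi\in\mathcal{S}_x$. The function $\hat q:=\sup_{\xi\in\mathcal{S}_x}\{q_x(\xi)+\langle\zeta_\xi,\cdot-\xi\rangle_x\}$ is convex and coincides with $q_x$ on $\mathcal{S}_x$, hence on a neighborhood of $\eta_x$. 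So $q_x$ is locally Lipschitz there, as Clarke's max rule requires, and its Clarke subdifferential at $\eta_x$ is the convex subdifferential of $\hat q$. The subgradient inequality for $\hat q$, evaluated at $\xi_x\in\mathcal{S}_x$, is exactly the inequality Step 2 needs. No convexity of $\mathcal{S}_x$ is required.

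The paper asserts (b) without justification, so with this repair your proof closes a point the published argument leaves implicit.
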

    \begin{proof}
        Using the optimality of $\eta_x$ for subproblem \eqref{subproblem}, the subdifferential rule for pointwise maxima of convex functions~\cite{clarke1990optimization}, and the Riemannian chain rule~\cite{hosseini2011generalized} on the tangent space $T_x\mathcal M$,there exist KKT multipliers $\lambda_i(x) \geq 0$ with $\sum_{i=1}^m \lambda_i(x) = 1$ and subgradients $\zeta_{i, z} \in \partial g_i(z)$ (where $z=R_x(\eta_x)$) such that:
        \begin{equation}\label{eq:kkt-lemma42}
              \sum_{i=1}^m \lambda_i(x) \operatorname{grad} f_i(x) + \tilde{L} \eta_x + \sum_{i=1}^m \lambda_i(x) \operatorname{D} R_x(\eta_x)^* [\zeta_{i, z}] = 0,
        \end{equation}
        where $\operatorname{D} R_x(\eta_x)^* \colon T_{z}\mathcal M \to T_{x}\mathcal M$. Since each $g_i$ is retraction-convex and $y=R_x(\xi_x), z \in \Omega$, convexity of $g_i \circ R_x$ at $\eta_x$ applied to the direction $\xi_x - \eta_x$ yields
        \begin{equation}\label{eq:g-convex-lemma42}
            g_i(y) - g_i(z) \ge \langle \operatorname{D} R_x(\eta_x)^* [\zeta_{i, z}],\; \xi_x - \eta_x \rangle_x .
        \end{equation}
        Rearranging \eqref{eq:g-convex-lemma42} gives $g_i(z) \le g_i(y) + \langle \operatorname{D} R_x(\eta_x)^* [\zeta_{i, z}],\; \eta_x - \xi_x \rangle_x$.
        Multiplying by $\lambda_i(x)$ and summing over $i$,
        \begin{equation}\label{ineq:g}
            \sum_{i=1}^m \lambda_i(x) g_i(z) \le \sum_{i=1}^m \lambda_i(x) g_i(y) + \Bigl\langle \sum_{i=1}^m \lambda_i(x) \operatorname{D} R_x(\eta_x)^* [\zeta_{i, z}],\; \eta_x - \xi_x \Bigr\rangle_x .
        \end{equation}
        Substituting the KKT condition \eqref{eq:kkt-lemma42}, namely $\sum_{i=1}^m \lambda_i(x) \operatorname{D} R_x(\eta_x)^* [\zeta_{i, z}] = -(\sum_{i=1}^m \lambda_i(x) \operatorname{grad} f_i(x) + \tilde L \eta_x)$, into \eqref{ineq:g} yields
        \begin{equation}\label{ineq:g2}
            \sum_{i=1}^m \lambda_i(x) g_i(z) \le \sum_{i=1}^m \lambda_i(x) g_i(y) + \Bigl\langle -\sum_{i=1}^m \lambda_i(x) \operatorname{grad} f_i(x) - \tilde L \eta_x,\; \eta_x - \xi_x \Bigr\rangle_x .
        \end{equation}
        Now, using the Assumption \ref{assume2} and Assumption \ref{assume3}, we can bound $\sum_{i=1}^m \lambda_i(x) F_i(z)$ as follows
        \begin{equation}\label{eq:expanded_lemma}
        \begin{aligned}
        \sum_{i=1}^m \lambda_i(x) F_i(z) & = \sum_{i=1}^m \lambda_i(x) (f_i(z) + g_i(z)) \\
        & \le \sum_{i=1}^m \lambda_i(x) \Bigl( f_i(x) + \langle \operatorname{grad} f_i(x), \eta_x \rangle_x + \frac{L}{2} \|\eta_x\|_x^2 \Bigr) + \sum_{i=1}^m \lambda_i(x) g_i(z) \\
        & \le \sum_{i=1}^m \lambda_i(x) f_i(x) + \Bigl\langle \sum_{i=1}^m \lambda_i(x) \operatorname{grad} f_i(x), \eta_x \Bigr\rangle_x + \frac{\tilde L}{2} \|\eta_x\|_x^2 \\
        & \quad + \sum_{i=1}^m \lambda_i(x) g_i(y) + \Bigl\langle -\sum_{i=1}^m \lambda_i(x) \operatorname{grad} f_i(x) - \tilde L \eta_x,\; \eta_x - \xi_x \Bigr\rangle_x \\
        & = \sum_{i=1}^m \lambda_i(x) f_i(x) + \Bigl\langle \sum_{i=1}^m \lambda_i(x) \operatorname{grad} f_i(x), \xi_x \Bigr\rangle_x + \sum_{i=1}^m \lambda_i(x) g_i(y) \\
        & \quad + \tilde L \langle \eta_x, \xi_x - \eta_x \rangle_x + \frac{\tilde L}{2} \|\eta_x\|_x^2 \\
        & \le \sum_{i=1}^m \lambda_i(x) (f_i(y) + g_i(y)) + \frac{\tilde L}{2}\bigl(2\langle \eta_x, \xi_x \rangle_x - \|\eta_x\|_x^2 \bigr) \\
        & = \sum_{i=1}^m \lambda_i(x) F_i(y) + \frac{\tilde L}{2} \bigl( \|\xi_x\|_x^2 - \|\xi_x - \eta_x\|_x^2 \bigr).
        \end{aligned}
        \end{equation}
        In the last inequality we used the retraction-convexity of $f_i$: $f_i(x) + \langle \operatorname{grad} f_i(x), \xi_x \rangle_x \le f_i(R_x(\xi_x)) = f_i(y)$. The final equality uses the polarization identity $2\langle \eta_x, \xi_x \rangle_x - \|\eta_x\|_x^2 = \|\xi_x\|_x^2 - \|\xi_x - \eta_x\|_x^2$.
    \end{proof}

To establish a global convergence rate for multiobjective optimization, we adopt the merit function from~\cite{tanabe2024new} as a stationarity measure. Given a compact neighborhood $\Omega$, define
\begin{equation}
    u_0(x)
    :=
    \sup_{y \in \Omega}
    \min_{i=1,\dots,m}
    \{F_i(x)-F_i(y)\}.
\end{equation}
Clearly, $u_0(x)\ge0$ for all $x\in\Omega$, and $u_0(x)=0$ if and only if $x$ is a weakly Pareto optimal point in $\Omega$.
For the complexity analysis, we further consider the averaged quantity
\begin{equation}
    \bar u_k
    :=
    \sup_{y\in\Omega}
    \frac1k
    \sum_{s=0}^{k-1}
    \min_i
    \{F_i(x_{s+1})-F_i(y)\},
\end{equation}
which can be viewed as an ergodic counterpart of the merit function $u_0$.

\begin{theorem}\label{thm:g_rate}
Suppose Assumptions \ref{assume0}, \ref{assume1}, \ref{assume3}, and \ref{assume4} hold. Let $\Omega$ be the compact neighborhood defined in the assumptions. Let
$
D:=\sup_{y\in\Omega}\|R_{x_0}^{-1}(y)\|_{x_0},
$
and 
$
F_i^*:=\inf_{x\in\Omega}F_i(x).
$
Then, the sequence $\{x_k\}$ generated by Algorithm \ref{alg:VRPGD} satisfies
\begin{equation}\label{eq:thmrate}
    \bar u_k
    \le
    \frac1k
    \left(
        \frac{\tilde L}{2}D^2
        +
        \frac{\tilde L\kappa_\Omega}{2\beta}
        \min_{i=1,\dots,m}
        (F_i(x_0)-F_i^*)
    \right),
\end{equation}
where $\kappa_\Omega$ and $\beta$ are defined in Assumption \ref{assume4} and Lemma \ref{descent}, respectively.
\end{theorem}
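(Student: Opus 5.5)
The plan follows the Euclidean telescoping argument of \cite{tanabe_convergence_2023}, carried out in the tangent spaces and using Assumption~\ref{assume4} to pass between them. Fix $y\in\Omega$. For each $s\ge 0$ set $\xi_s:=R_{x_s}^{-1}(y)\in T_{x_s}\mathcal M$. By Lemma~\ref{descent} all iterates lie in $\omega_{x_0}\subseteq\Omega$. Applying Lemma~\ref{lemma4.2} at $x=x_s$, with $z=x_{s+1}$, $\eta_x=\eta_s$ and $\xi_x=\xi_s$, gives multipliers $\lambda^{(s)}\in\Delta_m$ such that
\[
\sum_{i}\lambda_i^{(s)}\bigl(F_i(x_{s+1})-F_i(y)\bigr)\le \frac{\tilde L}{2}\bigl(\|\xi_s\|_{x_s}^2-\|\xi_s-\eta_s\|_{x_s}^2\bigr).
\]
The left-hand side is a convex combination, so it dominates $\min_i\{F_i(x_{s+1})-F_i(y)\}$. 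The same bound therefore holds for the min-term that appears in $\bar u_k$.

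Next I relate $\|\xi_s-\eta_s\|_{x_s}^2$ to $\|\xi_{s+1}\|_{x_{s+1}}^2$ via Assumption~\ref{assume4}, taking $x=x_s$, $y=x_{s+1}$ (so $\eta_x=\eta_s$) and $z=y$. This gives
\[
\|\xi_s-\eta_s\|_{x_s}^2\ge \|\xi_{s+1}\|_{x_{s+1}}^2-\kappa_\Omega\|\eta_s\|_{x_s}^2 .
\]
Substituting yields
\[
\min_i\{F_i(x_{s+1})-F_i(y)\}\le \frac{\tilde L}{2}\bigl(\|\xi_s\|_{x_s}^2-\|\xi_{s+1}\|_{x_{s+1}}^2\bigr)+\frac{\tilde L\kappa_\Omega}{2}\|\eta_s\|_{x_s}^2 .
\]
I then sum over $s=0,\dots,k-1$. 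The first term telescopes to at most $\frac{\tilde L}{2}\|\xi_0\|_{x_0}^2\le\frac{\tilde L}{2}D^2$.

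For the second term I use Lemma~\ref{descent} exactly as in the proof of Lemma~\ref{lemma:sq_sum}. For every index $i$,
\[
\beta\sum_{s=0}^{k-1}\|\eta_s\|_{x_s}^2\le F_i(x_0)-F_i(x_k)\le F_i(x_0)-F_i^*,
\]
where the last step uses $x_k\in\Omega$. Since this holds for every $i$, I may take the minimum over $i$. The resulting bound is independent of $y$, so dividing by $k$ and taking $\sup_{y\in\Omega}$ gives \eqref{eq:thmrate}.

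The main obstacle is well-definedness. The inverse retractions $R_{x_s}^{-1}(y)$ must exist for every $y\in\Omega$ and every iterate, and $R_{x_s}(\xi_s)=y$ must lie in the region where Lemma~\ref{lemma4.2} and Assumption~\ref{assume4} apply. I would handle this by reading Assumption~\ref{assume4} as implicitly guaranteeing that $R^{-1}$ is defined on $\Omega\times\Omega$, with $D<\infty$ by compactness. I also need to check that the multipliers from Lemma~\ref{lemma4.2} may depend on $s$. This is harmless, because I reduce to the min-term before summing, so the telescoping never involves $\lambda^{(s)}$.
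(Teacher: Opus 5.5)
Your proposal is correct and follows the paper's proof essentially step for step. Both arguments apply Lemma~\ref{lemma4.2} with $\xi_s=R_{x_s}^{-1}(y)$, reduce to the min-term through the convex combination, use the one-sided form of Assumption~\ref{assume4} to telescope, and invoke Lemma~\ref{descent} to bound $\sum_s\|\eta_s\|_{x_s}^2$; your last step, bounding $F_i(x_k)\ge F_i^*$ directly from $x_k\in\Omega$, is slightly cleaner than the paper's detour through $F_j(x_*)$ for an accumulation point $x_*$ that the theorem statement never introduces.
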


\begin{proof}
Let $\eta_s$ be the unique minimizer of the subproblem $p_{x_s}(\eta)$ at iteration $s$. By Lemma \ref{lemma4.2}, there exists a KKT multiplier vector $\lambda(x_s)\in\Delta_m$ such that for any $y\in\Omega$,
\begin{equation}\label{eq:rate_step1}
    \sum_{i=1}^m
    \lambda_i(x_s)
    \bigl(F_i(x_{s+1})-F_i(y)\bigr)
    \le
    \frac{\tilde L}{2}
    \left(
        \|R_{x_s}^{-1}(y)\|_{x_s}^2
        -
        \|R_{x_s}^{-1}(y)-\eta_s\|_{x_s}^2
    \right).
\end{equation}
Since $\lambda(x_s)\in\Delta_m$, it follows that
\begin{equation}\label{eq:rate_step2}
    \min_{i=1,\dots,m}
    \{F_i(x_{s+1})-F_i(y)\}
    \le
    \sum_{i=1}^m
    \lambda_i(x_s)
    \bigl(F_i(x_{s+1})-F_i(y)\bigr).
\end{equation}
Combining \eqref{eq:rate_step1} and \eqref{eq:rate_step2} we have
\begin{equation}\label{eq:rate_step3}
    \min_{i=1,\dots,m}
    \{F_i(x_{s+1})-F_i(y)\}
    \le
    \frac{\tilde L}{2}
    \left(
        \|R_{x_s}^{-1}(y)\|_{x_s}^2
        -
        \|R_{x_s}^{-1}(y)-\eta_s\|_{x_s}^2
    \right).
\end{equation}
Applying Assumption \ref{assume4} with $x=x_s$, $\tilde y=x_{s+1}=R_{x_s}(\eta_s)$, and $z=y$, we obtain
\[
-
\|R_{x_s}^{-1}(y)-\eta_s\|_{x_s}^2
\le
-
\|R_{x_{s+1}}^{-1}(y)\|_{x_{s+1}}^2
+
\kappa_\Omega\|\eta_s\|_{x_s}^2.
\]
Substituting this into \eqref{eq:rate_step3} gives
\begin{equation}\label{eq:rate_step4}
\begin{aligned}
    \min_{i=1,\dots,m}
    \{F_i(x_{s+1})-F_i(y)\}
    \le
    \frac{\tilde L}{2}
    \Big(
        \|R_{x_s}^{-1}(y)\|_{x_s}^2
        -
        \|R_{x_{s+1}}^{-1}(y)\|_{x_{s+1}}^2
    \Big)
    +
    \frac{\tilde L\kappa_\Omega}{2}
    \|\eta_s\|_{x_s}^2.
\end{aligned}
\end{equation}
Summing \eqref{eq:rate_step4} from $s = 0$ to $k-1$ yields
    \begin{equation}\label{eq:rate_sum}
    \begin{aligned}
        \sum_{s=0}^{k-1} \min_{i=1,\dots,m} \left\{ F_i(x_{s+1}) - F_i(y) \right\} &\leq \frac{\tilde{L}}{2} \left( \|R_{x_0}^{-1}(y)\|_{x_0}^2 - \|R_{x_k}^{-1}(y)\|_{x_k}^2 \right) + \frac{\tilde{L}\kappa_\Omega}{2} \sum_{s=0}^{k-1} \|\eta_s\|_{x_s}^2 \\
        &\leq \frac{\tilde{L}}{2}\|R_{x_0}^{-1}(y)\|_{x_0}^2 + \frac{\tilde{L}\kappa_\Omega}{2} \sum_{s=0}^{k-1} \|\eta_s\|_{x_s}^2,
    \end{aligned}
    \end{equation}
    where   $- \frac{\tilde{L}}{2} \|R_{x_k}^{-1}(y)\|_{x_k}^2 \leq 0$.

Since \eqref{eq:rate_sum} holds for every $y\in\Omega$, taking the supremum over $y\in\Omega$ on both sides yields
\begin{equation}\label{eq:rate_sup}
\sup_{y\in\Omega}
\sum_{s=0}^{k-1}
\min_i
\{F_i(x_{s+1})-F_i(y)\}
\le
\frac{\tilde L}{2}D^2
+
\frac{\tilde L\kappa_\Omega}{2}
\sum_{s=0}^{k-1}
\|\eta_s\|_{x_s}^2.
\end{equation}

By the definition of $\bar u_k$,
\begin{equation}\label{eq:rate_lhs_u}
    \bar u_k
    \le
    \frac1k
    \left(
        \frac{\tilde L}{2}D^2
        +
        \frac{\tilde L\kappa_\Omega}{2}
        \sum_{s=0}^{k-1}
        \|\eta_s\|_{x_s}^2
    \right).
\end{equation}
    Also from Lemma \ref{descent}, $F_i(x_s) - F_i(x_{s+1}) \geq \beta \|\eta_s\|_{x_s}^2$ for every $i$. Summing over $s$ and taking the minimum over $j$ gives
    \begin{equation}\label{eq:rate_rhs}
        \sum_{s=0}^{k-1} \|\eta_s\|_{x_s}^2 \leq \frac{1}{\beta} \min_{j=1,\dots,m} \left( F_j(x_0) - F_j(x_k) \right) \leq \frac{1}{\beta} \min_{j=1,\dots,m} \left( F_j(x_0) - F_j(x_*) \right).
    \end{equation}
    Substituting \eqref{eq:rate_rhs} into \eqref{eq:rate_lhs_u}  yields \eqref{eq:thmrate}.
\end{proof}

In addition to the ergodic result, we can also establish a non-ergodic convergence rate.

    \begin{theorem}\label{thm:g_rate2}
Suppose Assumptions \ref{assume0}, \ref{assume1}, \ref{assume3}, and \ref{assume4} hold. Let $x_* \in \Omega$ be any accumulation point of the sequence $\{x_k\}$ generated by Algorithm \ref{alg:VRPGD}. Then, for all $k \geq 1$,
\begin{equation}\label{eq:thm2}
\min_{i=1,\dots,m} \left( F_i(x_k) - F_i(x_*) \right)
\leq
\frac{1}{k}
\left(
\frac{\tilde{L}}{2}\|R_{x_0}^{-1}(x_*)\|_{x_0}^2
+
\frac{\tilde{L}\kappa_\Omega}{2\beta}
\min_{j=1,\dots,m}
\left( F_j(x_0)-F_j(x_*) \right)
\right),
\end{equation}
where $\kappa_\Omega$ and $\beta$ are defined in Assumption \ref{assume4} and Lemma \ref{descent}, respectively.
\end{theorem}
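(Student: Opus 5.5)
Fix $y:=x_*$ and reuse the per-iteration estimate from the proof of Theorem~\ref{thm:g_rate}. Applying Lemma~\ref{lemma4.2} at $x=x_s$ with $\xi_{x_s}=R_{x_s}^{-1}(x_*)$, and then Assumption~\ref{assume4}, gives, exactly as in \eqref{eq:rate_step4},
\[
\min_{i}\{F_i(x_{s+1})-F_i(x_*)\}\le \frac{\tilde L}{2}\Big(\|R_{x_s}^{-1}(x_*)\|_{x_s}^2-\|R_{x_{s+1}}^{-1}(x_*)\|_{x_{s+1}}^2\Big)+\frac{\tilde L\kappa_\Omega}{2}\|\eta_s\|_{x_s}^2 .
\]
For this to be legitimate we need $x_s\in\Omega$ for all $s$; this holds because Lemma~\ref{descent} keeps the iterates in $\omega_{x_0}\subseteq\Omega$. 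We also need $x_*\in\Omega$, which is given in the statement.

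Next, exploit monotonicity to pass from the ergodic bound to a bound on the last iterate. By Lemma~\ref{descent}, each sequence $F_i(x_s)$ is nonincreasing in $s$. Hence for every $s\le k-1$ and every $i$,
\[
F_i(x_k)-F_i(x_*)\le F_i(x_{s+1})-F_i(x_*).
\]
Taking the minimum over $i$ gives $\min_i\{F_i(x_k)-F_i(x_*)\}\le\min_i\{F_i(x_{s+1})-F_i(x_*)\}$. Summing this over $s=0,\dots,k-1$ yields
\[
k\,\min_i\{F_i(x_k)-F_i(x_*)\}\le\sum_{s=0}^{k-1}\min_i\{F_i(x_{s+1})-F_i(x_*)\}.
\]

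Then sum the displayed per-step inequality. The first term telescopes to $\frac{\tilde L}{2}\big(\|R_{x_0}^{-1}(x_*)\|^2-\|R_{x_k}^{-1}(x_*)\|^2\big)\le\frac{\tilde L}{2}\|R_{x_0}^{-1}(x_*)\|_{x_0}^2$. For the second term, use \eqref{eq:rate_rhs}:
\[
\sum_{s=0}^{k-1}\|\eta_s\|_{x_s}^2\le\frac1\beta\min_j\big(F_j(x_0)-F_j(x_k)\big).
\]
Replacing $F_j(x_k)$ by $F_j(x_*)$ requires $F_j(x_k)\ge F_j(x_*)$. This follows as in the last part of the proof of Theorem~\ref{thm:convergence}: $F_j(x_k)\ge\liminf_l F_j(x_{k_l})\ge F_j(x_*)$, using monotonicity along a subsequence converging to $x_*$ and lower semicontinuity of $F_j$ (closedness). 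Dividing by $k$ then gives \eqref{eq:thm2}.

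The main obstacle is this replacement of $F_j(x_k)$ by $F_j(x_*)$, together with checking that Lemma~\ref{lemma4.2} and Assumption~\ref{assume4} apply with $z=x_*$. Both rely on $x_*$ lying in $\Omega$ and being a limit of the monotone sequence, so I would state these membership and lower-semicontinuity facts explicitly before carrying out the telescoping.
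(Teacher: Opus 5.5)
Your proposal follows the paper's proof essentially step for step. Both use the per-iteration inequality from Lemma~\ref{lemma4.2} with $y=x_*$ combined with Assumption~\ref{assume4}, telescope it, use monotonicity to get $k\min_i\{F_i(x_k)-F_i(x_*)\}\le\sum_{s=0}^{k-1}\min_i\{F_i(x_{s+1})-F_i(x_*)\}$, and bound $\sum_{s}\|\eta_s\|_{x_s}^2$ via Lemma~\ref{descent}. The only difference is cosmetic: you justify $F_j(x_k)\ge F_j(x_*)$ through monotonicity plus lower semicontinuity, whereas the paper uses continuity of $F_i$ to get $F_i(x_k)\to F_i(x_*)$; both arguments are valid here.
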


\begin{proof}
Following the proof of Theorem~\ref{thm:g_rate}, for any accumulation point $x_*$ and all $s\ge0$, we obtain
\begin{equation}\label{eq:rate_step22}
\min_{i=1,\dots,m}
\bigl(F_i(x_{s+1})-F_i(x_*)\bigr)
\leq
\frac{\tilde L}{2}
\left(
\|R_{x_s}^{-1}(x_*)\|_{x_s}^2
-
\|R_{x_s}^{-1}(x_*)-\eta_s\|_{x_s}^2
\right).
\end{equation}
Applying Assumption~\ref{assume4} with $x=x_s$, $\tilde y=x_{s+1}=R_{x_s}(\eta_s)$, and $z=x_*$, and summing from $s=0$ to $k-1$, yields
\begin{equation}\label{eq:rate_sum2}
\begin{aligned}
\sum_{s=0}^{k-1}
\min_i
\bigl(F_i(x_{s+1})-F_i(x_*)\bigr)
\leq
\frac{\tilde L}{2}
\|R_{x_0}^{-1}(x_*)\|_{x_0}^2
+
\frac{\tilde L\kappa_\Omega}{2}
\sum_{s=0}^{k-1}
\|\eta_s\|_{x_s}^2.
\end{aligned}
\end{equation}

By Lemma~\ref{descent}, each sequence $\{F_i(x_k)\}$ is monotonically non-increasing and convergent. Since $x_*$ is an accumulation point and $F_i$ is continuous, then for all $i$, $F_i(x_k)\to F_i(x_*)$.
% \[
% F_i(x_k)\to F_i(x_*), \qquad i=1,\dots,m.
% \]
Hence,
\[
F_i(x_{s+1})\ge F_i(x_k)\ge F_i(x_*),
\qquad \forall s\le k-1,
\]
which implies
\begin{equation}\label{eq:rate_lhs}
\sum_{s=0}^{k-1}
\min_i
\bigl(F_i(x_{s+1})-F_i(x_*)\bigr)
\ge
k
\min_i
\bigl(F_i(x_k)-F_i(x_*)\bigr).
\end{equation}

Combining \eqref{eq:rate_sum2}, \eqref{eq:rate_lhs}, and the estimate on $\sum_{s=0}^{k-1}\|\eta_s\|_{x_s}^2$ from the proof of Theorem~\ref{thm:g_rate} yields the desired result.
\end{proof}

\subsubsection{Solving the Riemannian Proximal Mapping}
\label{sec: subproblem_computation}

The subproblem \eqref{eq:pg_step} requires minimizing $p_x(\eta) = \psi_x(\eta) + \frac{\tilde{L}}{2}\|\eta\|_x^2$ over $T_x\mathcal{M}$, where $\psi_x$ involves the composition $g_i \circ R_x$. For general retractions, this composition may be difficult to handle directly. Following the approach of~\cite{huang_riemannian_2022}, we develop an iterative solver that transfers the subproblem to successively updated tangent spaces.

\begin{assumption}\label{assump6}
    (i) The manifold $\mathcal{M}$ is an embedded submanifold of $\mathbb{R}^n$.
    (ii) Each $g_i$ is Lipschitz continuous with constant $L_g$ and convex in $\mathbb{R}^n$.
    (iii) Each $g_i$ is bounded from below on $\mathcal{M}$.
\end{assumption}

Fix $x \in \mathcal{M}$ and define the subproblem objective
\begin{equation}\label{eq:ell-def}
    \ell_x(\eta) := \max_{i=1, \ldots, m} \Big( \langle \operatorname{grad} f_i(x), \eta \rangle_x + g_i(R_x(\eta)) - g_i(x) \Big) + \frac{\tilde{L}}{2}\|\eta\|_x^2.
\end{equation}
Given a current estimate $\eta_k \in T_x\mathcal{M}$, we seek a correction $\tilde{\xi}_k \in T_x\mathcal{M}$ that decreases $\ell_x$. Set $y_k := R_x(\eta_k)$ and let $\xi_k := \mathcal{T}_{R_{\eta_k}} \tilde{\xi}_k \in T_{y_k}\mathcal{M}$ denote the transported correction, where $\mathcal{T}_{R_{\eta_k}}$ is the vector transport associated with $R$. Expanding $\ell_x(\eta_k + \tilde{\xi}_k)$ for any index $i$ attaining the maximum at $\eta_k$:
\begin{align*}
    \ell_x(\eta_k + \tilde{\xi}_k) &= \langle \operatorname{grad} f_i(x), \eta_k \rangle_x + \frac{\tilde{L}}{2}\|\eta_k\|_x^2 \\
    &\quad + \langle \operatorname{grad} f_i(x) + \tilde{L}\eta_k, \tilde{\xi}_k \rangle_x + \frac{\tilde{L}}{2}\|\tilde{\xi}_k\|_x^2 + g_i(R_x(\eta_k + \tilde{\xi}_k)) - g_i(x).
\end{align*}
Since $R$ is smooth, $R_x(\eta_k + \tilde{\xi}_k) = y_k + \xi_k + O(\|\xi_k\|^2)$. The Lipschitz continuity of $g_i$ (Assumption~\ref{assump6}(ii)) gives
$|g_i(y_k + \xi_k + O(\|\xi_k\|^2)) - g_i(y_k + \xi_k)| \le L_g \cdot O(\|\xi_k\|^2) = O(\|\xi_k\|^2)$.
Since $\|\mathcal{T}_{R_{\eta_k}}^{-1}\xi_k\|_x$ and $\|\xi_k\|_{y_k}$ are equivalent up to second-order terms on a compact embedded submanifold (by the smoothness of the metric and the transport), we obtain the local model
\begin{equation}\label{eq:transferred-subproblem}
    \ell_x(\eta_k + \tilde{\xi}_k) = \ell_x(\eta_k) + \tilde{\ell}_{y_k}(\xi_k) + O(\|\xi_k\|_{y_k}^2),
\end{equation}
where the transferred subproblem at $y_k \in \mathcal{M}$ is
\begin{equation}\label{sub_tran}
    \tilde{\ell}_{y_k}(\xi) := \max_{i \in I(\eta_k)} \Big( \langle \mathcal{T}_{R_{\eta_k}}^{-\sharp}(\operatorname{grad} f_i(x) + \tilde{L}\eta_k), \xi \rangle_{y_k} + g_i(y_k + \xi) - g_i(y_k) \Big) + \frac{\tilde{L}}{2}\|\xi\|_{y_k}^2,
\end{equation}
with $I(\eta_k) := \{j \in \{1,\ldots,m\} : \varphi_j(x,\eta_k) = \psi_x(\eta_k)\}$ the active index set, and $\mathcal{T}_{R_{\eta_k}}^{-\sharp}$ the adjoint of $\mathcal{T}_{R_{\eta_k}}^{-1}$.

The key observation is that \eqref{sub_tran} involves $g_i(y_k + \xi)$ evaluated in the ambient space, which preserves the convexity of $g_i$. For embedded submanifolds where $T_{y_k}\mathcal{M}$ is a linear subspace of $\mathbb{R}^n$, subproblem~\eqref{sub_tran} reduces to a finite-dimensional convex program that can be solved by standard methods.
% (e.g., SOCP solvers when $g_i = \mu\|\cdot\|_1$).

The descent direction $\xi_k^* := \arg\min_{\xi \in T_{y_k}\mathcal{M}} \tilde{\ell}_{y_k}(\xi)$ is then pulled back to $T_x\mathcal{M}$ via $\mathcal{T}_{R_{\eta_k}}^{-1}\xi_k^*$. An Armijo line search on the exact objective $\ell_x$ ensures sufficient decrease. The complete procedure is given in Algorithm~\ref{alg: mapping}.

\begin{algorithm}[ht]
  \caption{Iterative Solver for the Riemannian Proximal Mapping}
  \label{alg: mapping}
  \noindent \textbf{Input:} Base point $x \in \mathcal{M}$; parameter $\tilde{L} > L$; initial estimate $\eta_0 = 0_{x} \in T_x\mathcal{M}$; line search parameter $\sigma \in (0, 1)$.\\
  \textbf{for} $k = 0, 1, \ldots$ \textbf{do}\\
  \quad 1. Set $y_k = R_x(\eta_k)$.\\
  \quad 2. Compute $\xi_k^* = \arg\min_{\xi \in T_{y_k}\mathcal{M}} \tilde{\ell}_{y_k}(\xi)$ by solving \eqref{sub_tran}.\\
  \quad 3. Set step size $\alpha = 1$.\\
  \quad 4. \textbf{while} $\ell_x(\eta_k + \alpha \mathcal{T}_{R_{\eta_k}}^{-1}\xi_k^*) > \ell_x(\eta_k) - \sigma \alpha \|\xi_k^*\|_{y_k}^2$ \textbf{do} $\alpha \leftarrow \alpha / 2$. \\
  \quad 5. Update $\eta_{k+1} = \eta_k + \alpha \mathcal{T}_{R_{\eta_k}}^{-1}\xi_k^*$.\\
  \textbf{end for}
\end{algorithm}

\begin{remark}
    The Armijo line search in Step~4 evaluates the exact objective $\ell_x$, so that the sequence $\{\ell_x(\eta_k)\}$ generated by Algorithm~\ref{alg: mapping} is monotonically decreasing. The transferred subproblem~\eqref{sub_tran}, based on the ambient evaluation $g_i(y_k+\xi)$, serves only as a local model for the computation of the search direction.
    % This design maintains full consistency with the convergence theory of Sections~\ref{sec: MRPGD method}--\ref{sec: trust region}, which assumes the exact retraction-based subproblem.
\end{remark}

\section{Inexact version of the proposed method}
\label{sec: Inexact method}
The inexact method is proposed in Algorithm \ref{alg:inexact}.
Recall that the search direction at the $k$-th iteration minimizes the proximal mapping
    \begin{equation*}
    p_{x_{k}}(\eta) = \max_{i=1, \ldots, m} \Big( \left\langle\operatorname{grad} f_i\left(x_k\right), \eta\right\rangle_{x_k} + g_{i}(R_{x_k}(\eta))-g_{i}(x_k) \Big) + \frac{\tilde{L}}{2} \big\|\eta \big\|_{x_k}^{2}.
    \end{equation*}
Instead of solving this subproblem to exact optimality, which may be computationally expensive or admit no closed-form solution, we introduce a relaxed criterion that accepts an approximate step, as detailed in Algorithm~\ref{alg:inexact}. The proposed inexact framework ensures convergence even when the subproblems are solved only approximately.
    \begin{algorithm}[ht]
      \caption{Inexact Riemannian Multiobjective Proximal Gradient Method (Inexact-RMPGM)}
      \label{alg:inexact}
      \noindent Require: A constant $\tilde{L}>L$; an initial iterate $x_0$; \\
        1: for $k=0, 1, \ldots$, do\\
        2: \quad Find an inexact step $\hat{\eta}_k \in T_{x_k}\mathcal{M}$ such that $p_{x_k}(\hat{\eta}_k) \leq p_{x_k}(0)$ and there exists a subgradient $v_k \in \partial p_{x_k}(\hat{\eta}_k)$ satisfying:
        $$
        \|v_k\|_{x_k} \leq \varepsilon_k \|\hat{\eta}_k\|_{x_k},
        $$
        \quad \quad where $\varepsilon_k > 0$ is a forcing sequence with $\lim_{k\to\infty}\varepsilon_k = 0$; \\
        3: \quad $x_{k+1} = R_{x_k}(\hat{\eta}_k)$; \\
        4: end for
    \end{algorithm}

    % % \red{ Alternative practical remark removed as duplicate; see below.}
    % The condition $\|v_k\|_{x_k}\le\varepsilon_k\|\hat{\eta}_k\|_{x_k}$ is a theoretical device employed for convergence analysis. In practice, the full Clarke subdifferential $\partial p_{x_k}(\hat{\eta}_k)$ cannot be computed; practical implementations often employ surrogate stopping rules, such as bounding the objective gap $\|p_{x_k}(\hat{\eta}_k)-p_{x_k}(\eta^*_k)\|\le\varepsilon_k$ or monitoring the decrease of $p_{x_k}$ across inner iterations. Note that $\varepsilon_k$ governs the asymptotic accuracy: the rapid decay of the residual as $\|\hat{\eta}_k\|_{x_k}\to0$ and $\varepsilon_k\to0$ requires solving the subproblems to increasingly high precision near convergence. Such a trade-off is intrinsic to all inexact proximal methods~\cite{tanabe_proximal_2019,huang_inexact_2023}.
% The condition $\|v_k\|_{x_k}\le\varepsilon_k\|\hat{\eta}_k\|_{x_k}$ is mainly introduced for convergence analysis. 
In practice, the full Clarke subdifferential $\partial p_{x_k}(\hat{\eta}_k)$ is typically unavailable, and one instead uses practical stopping criteria, such as objective-gap conditions or monitoring the decrease of $p_{x_k}$ during the inner iterations. The parameter $\varepsilon_k$ controls the asymptotic accuracy: as $\|\hat{\eta}_k\|_{x_k}\to0$ and $\varepsilon_k\to0$, the subproblems must be solved more accurately near convergence, as is common in inexact proximal methods~\cite{tanabe_proximal_2019,huang_inexact_2023}.

To guarantee that all descent steps remain inside the region where the retraction-convexity assumption is valid,
we establish the following localization result.

\begin{lemma}
\label{lem:localization}
Suppose Assumptions~\ref{assume0}--\ref{assume3} hold.
Let $\omega_{x_0}$ be the compact level set in Assumption~\ref{assume1}$,$ and let $\Omega \supseteq \omega_{x_0}$ be the associated open neighborhood. Then there exists a radius $r_\Omega>0$ such that
\[
R_x\bigl(B(0_x,r_\Omega)\bigr)\subset \Omega,
\qquad
\forall x\in\omega_{x_0},
\]
where
$B(0_x,r_\Omega)
:=
\{\eta\in T_x\mathcal M:\|\eta\|_x\le r_\Omega\}.$
Moreover, there exists a constant $M>0$ such that for all $x\in\omega_{x_0}$ and all $\eta\in T_x\mathcal M$ satisfying  $\|\eta\|_x \le r_\Omega$,
\begin{equation}
\label{eq:localization_bound}
\max_{i=1,\dots,m}
\Bigl(
\langle \operatorname{grad} f_i(x),\eta\rangle_x
+
g_i(R_x(\eta))
-
g_i(x)
\Bigr)
\ge
-M\|\eta\|_x.
\end{equation}
Consequently, if $\tilde L\ge \frac{2M}{r_\Omega}$, then every step $\eta\in T_x\mathcal M$ satisfying $p_x(\eta)\le0$ necessarily belongs to $B(0_x,r_\Omega)$.
\end{lemma}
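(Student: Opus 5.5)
The proof has three parts, matching the three claims of Lemma~\ref{lem:localization}: a uniform radius from compactness, a linear lower bound from Lipschitz estimates, and a one-line consequence.

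\emph{Part 1: the uniform radius.} Since $\omega_{x_0}$ is compact by Assumption~\ref{assume1} and $\Omega$ is open, I will use a tube-lemma argument on the tangent bundle. The retraction $R\colon T\mathcal M\to\mathcal M$ is continuous, so $R^{-1}(\Omega)$ is an open subset of $T\mathcal M$. It contains the zero section over $\omega_{x_0}$, because $R_x(0_x)=x\in\omega_{x_0}\subset\Omega$. The set $\{(x,\eta):x\in\omega_{x_0},\ \|\eta\|_x\le r\}$ is compact, and these sets shrink to the zero section as $r\downarrow0$. Suppose no $r_\Omega$ works. Then there are $x_j\in\omega_{x_0}$ and $\|\eta_j\|_{x_j}\le 1/j$ with $R_{x_j}(\eta_j)\notin\Omega$. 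After passing to a subsequence, $(x_j,\eta_j)\to(\bar x,0_{\bar x})$. Continuity of $R$ then gives $R_{x_j}(\eta_j)\to\bar x\in\Omega$, which contradicts the openness of $\Omega$. I may shrink $r_\Omega$ further so that $R_x(B(0_x,r_\Omega))$ lies in the neighbourhood $\mathcal U$ of Assumption~\ref{assume0}, by applying the same argument to $\mathcal U\cap\Omega$.

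\emph{Part 2: the bound \eqref{eq:localization_bound}.} It suffices to bound a single index, say $i=1$, since the maximum dominates each term.
\begin{itemize}
\item \emph{Gradient term.} By continuity of $\operatorname{grad} f_1$ on the compact set $\omega_{x_0}$, $G:=\max_i\sup_{x\in\omega_{x_0}}\|\operatorname{grad} f_i(x)\|_x<\infty$. Hence $\langle\operatorname{grad} f_1(x),\eta\rangle_x\ge -G\|\eta\|_x$.
\item \emph{Nonsmooth term.} I need $g_1(R_x(\eta))-g_1(x)\ge -C\|\eta\|_x$ uniformly. The set $K:=R(\{(x,\eta):x\in\omega_{x_0},\|\eta\|_x\le r_\Omega\})$ is compact and lies in $\mathcal U$. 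A locally Lipschitz function is Lipschitz, with some constant $L_g'$ with respect to the Riemannian distance, on a compact set; at worst, shrinking $r_\Omega$ makes this hold on a compact neighbourhood. So $|g_1(R_x(\eta))-g_1(x)|\le L_g'\,\mathrm{dist}(R_x(\eta),x)$.
\item \emph{Comparing distance and length.} I then need $\mathrm{dist}(R_x(\eta),x)\le c\|\eta\|_x$. Bound the distance by the length of the curve $t\mapsto R_x(t\eta)$, which is at most $\sup\|\mathrm D R_x(t\eta)\|\,\|\eta\|_x$. That supremum is finite by smoothness of $R$ on the compact set of $(x,\eta)$ above.
\end{itemize}
Setting $M:=G+L_g'c$ gives \eqref{eq:localization_bound}.

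\emph{Part 3: the consequence.} Let $\tilde L\ge 2M/r_\Omega$ and suppose $p_x(\eta)\le0$ with $\|\eta\|_x>r_\Omega$. The difficulty is that \eqref{eq:localization_bound} is only known inside the ball. I will handle this with a rescaling argument that uses the geodesic convexity of $g_i$ to make $\psi_x$ (hence $p_x$) behave convexly along the ray $t\mapsto t\eta$. If $p_x$ is convex along this ray with $p_x(0)=0$, then $p_x(\eta)\le0$ implies $p_x(s\eta)\le0$ for $s\in[0,1]$. Choose $s$ with $\|s\eta\|_x=r_\Omega$. The bound \eqref{eq:localization_bound} then gives
\[
p_x(s\eta)\ge -M r_\Omega+\tfrac{\tilde L}{2}r_\Omega^2\ge0 .
\]
Equality throughout is excluded by passing to a strict version: take $\tilde L>2M/r_\Omega$, or observe that $\tilde L=2M/r_\Omega$ forces the boundary case.

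This ray-convexity step is the main obstacle, because $g_i\circ R_x$ need not be convex for a general retraction. If it fails, the fallback is to invoke retraction-convexity (Assumption~\ref{assume3}) on $\Omega$. Alternatively, I would restrict attention to the steps actually produced by the algorithm, which are minimizers or descent iterates reached from $0_x$. For those, a continuity argument along the path shows the norm cannot cross the sphere $\|\eta\|_x=r_\Omega$, since $p_x>0$ on that sphere.
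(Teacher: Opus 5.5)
Your Parts 1 and 2 are correct and follow the paper's route, with more detail than the paper gives. The paper uses a compactness argument for $r_\Omega$, then a bound $C_f$ from continuity of $\operatorname{grad} f_i$ on $\omega_{x_0}$, and a Lipschitz bound for $g_i\circ R_x$ after shrinking $r_\Omega$. The gap is in Part 3, and none of your three routes closes it. First, geodesic convexity of $g_i$ says nothing about the curves $t\mapsto R_x(t\eta)$ for a general retraction. Second, retraction-convexity (Assumption~\ref{assume3}) only applies to sets $\mathcal S_x$ with $R_x(\mathcal S_x)\subseteq\Omega$. Convexity along the whole segment from $0_x$ to $\eta$ would need $R_x(t\eta)\in\Omega$ for all $t\in[0,1]$, which is essentially what you are trying to prove, so invoking it is circular. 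Third, the path-continuity argument only reaches points of $\{p_x\le 0\}$ connected to $0_x$ inside that set. The lemma, however, is claimed for every $\eta$ with $p_x(\eta)\le 0$. Neither the exact minimizer nor an arbitrary inexact step of Algorithm~\ref{alg:inexact} is known to lie in that component.

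You were right to single this step out, because the paper's proof does not fill it either. After proving the bound on the ball, it writes $p_x(\eta)\ge -M\|\eta\|_x+\frac{\tilde L}{2}\|\eta\|_x^2$ for arbitrary $\eta$ and deduces $\|\eta\|_x\le 2M/\tilde L\le r_\Omega$. That uses the inequality outside the ball where it was established, which is circular. In fact the last claim is false under Assumptions~\ref{assume0}--\ref{assume3} alone. Take $\mathcal M=\mathbb R$ with $m=1$ (or two copies of one objective), $f(y)=y^2$, $g(y)=-y$, and $x_0=1$, so $\omega_{x_0}=[0,1]$. Let $\Omega=(-\tfrac12,\tfrac32)$ and $R_x(\eta)=x+\eta+\varphi(\eta)$, where $\varphi\ge0$ is smooth, $\varphi\equiv0$ on $(-\infty,2]$, and $\varphi(\eta)=\eta^4$ for $\eta\ge3$. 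For $x\in\Omega$, every $\mathcal S_x$ with $R_x(\mathcal S_x)\subseteq\Omega$ lies in $(-\infty,2]$, where $R_x$ is a translation. Hence all the assumptions hold with $L=2$. Yet for $\eta\ge3$ one has $p_x(\eta)=(2x-1)\eta-\eta^4+\frac{\tilde L}{2}\eta^2\to-\infty$ as $\eta\to+\infty$, for every $\tilde L$.

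What is missing is a global lower bound on $\psi_x$. Suppose, for example, each $g_i$ is bounded below, as in Assumption~\ref{assump6}(iii) or on a compact $\mathcal M$ such as the sphere. Then for $\|\eta\|_x>r_\Omega$ one has $g_i(R_x(\eta))-g_i(x)\ge -C\ge -(C/r_\Omega)\|\eta\|_x$, with $C$ uniform over $x\in\omega_{x_0}$. Replacing $M$ by $C_f+\max\{C_g,C/r_\Omega\}$ then makes the bound valid on all of $T_x\mathcal M$, and the paper's one-line conclusion becomes correct. Alternatively, if $R=\operatorname{Exp}$ and each $g_i$ is convex along geodesics, your ray argument goes through. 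The boundary case is then excluded by strong convexity along the ray: $p_x(s\eta)\le s\,p_x(\eta)-\frac{\tilde L}{2}s(1-s)\|\eta\|_x^2<0$ for $s\in(0,1)$, while the bound on the ball gives $p_x(s\eta)\ge0$ at $\|s\eta\|_x=r_\Omega$.
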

\begin{proof}
Since $\omega_{x_0}$ is compact and $\Omega$ is an open neighborhood containing $\omega_{x_0}$, a standard compactness argument yields the existence of a uniform radius $r_\Omega>0$ such that
\[
R_x(B(0_x,r_\Omega))
\subset
\Omega,
\qquad
\forall x\in\omega_{x_0}.
\]

By continuity of $\operatorname{grad} f_i$ on the compact set $\omega_{x_0}$, there exists $C_f>0$ such that
\[
\|\operatorname{grad} f_i(x)\|_x
\le
C_f,
\qquad
\forall x\in\omega_{x_0},\ i=1,\dots,m.
\]

Since each $g_i$ is locally Lipschitz on $\Omega$, shrinking $r_\Omega$ if necessary, there exists $C_g>0$ such that
\[
|g_i(R_x(\eta))-g_i(x)|
\le
C_g\|\eta\|_x
\]
for all $x\in\omega_{x_0}$ and $\eta\in B(0_x,r_\Omega)$.
Therefore,
\[
\langle \operatorname{grad} f_i(x),\eta\rangle_x
+
g_i(R_x(\eta))
-
g_i(x)
\ge
-(C_f+C_g)\|\eta\|_x.
\]

Setting $M:=C_f+C_g$ yields \eqref{eq:localization_bound}.
Then, 
\[
p_x(\eta)
\ge
-M\|\eta\|_x
+
\frac{\tilde L}{2}\|\eta\|_x^2.
\]
Hence, if $p_x(\eta)\le0$, then
\[
\frac{\tilde L}{2}\|\eta\|_x^2
\le
M\|\eta\|_x.
\]
For $\eta\neq0$, this implies
\[
\|\eta\|_x
\le
\frac{2M}{\tilde L}.
\]
Therefore, if $\tilde L\ge \frac{2M}{r_\Omega}$, then $\|\eta\|_x\le r_\Omega$.
\end{proof}
\begin{theorem}
    Suppose that Assumptions \ref{assume0}, \ref{assume1}, \ref{assume2}, and \ref{assume3} hold. Let $\{\varepsilon_k\}$ be a positive sequence such that $\lim_{k \to \infty} \varepsilon_k = 0$. Then the sequence $\{x_k\}$ has at least one Pareto accumulation point.
    Let $x_*$ be any accumulation point of the sequence $\{x_k\}$. Then $x_*$ is a Pareto stationary point.
\end{theorem}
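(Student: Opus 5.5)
The plan is to mirror the proof of Theorem~\ref{thm:convergence}. The exact KKT condition \eqref{eq:kkt-optimality} gets replaced by a perturbed one that carries the inexactness residual $v_k$. The argument has three stages: a descent inequality for the inexact step, membership of the iterates in $\omega_{x_0}$ together with summability of the steps, and a limiting argument for stationarity in which $v_k$ vanishes.

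\textbf{Step 1 (descent).} Since Algorithm~\ref{alg:inexact} enforces $p_{x_k}(\hat\eta_k)\le p_{x_k}(0)=0$, the proof of Lemma~\ref{descent} goes through verbatim, because it used only $p_{x_k}(\eta_k)\le 0$ and the $L$-retraction-smoothness. This gives
\[
F_i(x_k)-F_i(x_{k+1})\ge \beta\|\hat\eta_k\|_{x_k}^2,\qquad i=1,\dots,m .
\]
Retraction-smoothness must be applied at $R_{x_k}(\hat\eta_k)$. To justify this I use Lemma~\ref{lem:localization}: taking $\tilde L\ge 2M/r_\Omega$ (enlarging $\tilde L$ if necessary), every step with $p_{x_k}(\hat\eta_k)\le0$ lies in $B(0_{x_k},r_\Omega)$, so $x_{k+1}\in\Omega$. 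Then by induction $F(x_k)\preceq F(x_0)$, hence $x_k\in\omega_{x_0}$ for all $k$. Compactness (Assumption~\ref{assume1}) gives an accumulation point. The argument of Lemma~\ref{lemma:sq_sum} gives $\sum_k\|\hat\eta_k\|^2<\infty$, so in particular $\|\hat\eta_k\|_{x_k}\to0$.

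\textbf{Step 2 (perturbed KKT).} Fix a subsequence $x_{k_j}\to x_*$. From $v_{k_j}\in\partial p_{x_{k_j}}(\hat\eta_{k_j})$, the max-rule and chain rule used in Theorem~\ref{thm:convergence} give multipliers $\lambda^{(k_j)}\in\Delta_m$ and subgradients $\zeta_{i,x_{k_j+1}}\in\partial g_i(x_{k_j+1})$ such that
\[
\sum_{i}\lambda_i^{(k_j)}\operatorname{grad} f_i(x_{k_j})+\tilde L\hat\eta_{k_j}+\sum_i\lambda_i^{(k_j)}\mathrm{D}R_{x_{k_j}}(\hat\eta_{k_j})^*[\zeta_{i,x_{k_j+1}}]=v_{k_j}.
\]
The residual is controlled by $\|v_{k_j}\|\le\varepsilon_{k_j}\|\hat\eta_{k_j}\|\to0$. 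In fact it vanishes even without $\varepsilon_k\to0$, since $\|\hat\eta_{k_j}\|\to 0$.

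\textbf{Step 3 (limit).} Define $w_{k_j}:=\sum_i\lambda_i^{(k_j)}\bigl(\operatorname{grad} f_i(x_{k_j+1})+\zeta_{i,x_{k_j+1}}\bigr)$. Solving the perturbed KKT for the $\zeta$-sum, using invertibility of $\mathrm{D}R_{x_{k_j}}(\hat\eta_{k_j})^*$ for large $j$ and Lemma~\ref{lemma2}, gives
\[
w_{k_j}=\sum_i\lambda_i^{(k_j)}\operatorname{grad} f_i(x_{k_j+1})-(\mathrm{D}R^*)^{-1}\Bigl(\sum_i\lambda_i^{(k_j)}\operatorname{grad} f_i(x_{k_j})+\tilde L\hat\eta_{k_j}-v_{k_j}\Bigr).
\]
This tends to $0$ because $\hat\eta_{k_j}\to0$, $v_{k_j}\to0$, and $(\mathrm{D}R^*)^{-1}$ stays uniformly bounded. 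From $w_{k_j}\to0$ onward, the rest copies Theorem~\ref{thm:convergence}. Extract $\lambda^{(k_j)}\to\lambda^*$ and use the uniform boundedness of $\partial F_i$ on $\mathcal U$ (Assumption~\ref{assume0}) to replace $\lambda^{(k_j)}$ by $\lambda^*$. Transport an arbitrary $d\in T_{x_*}\mathcal M$ to $d_{k_j}$, and bound $\langle s_{i,j},d_{k_j}\rangle\le F_i^\circ(x_{k_j+1};d_{k_j})$. Upper semicontinuity of $F_i^\circ$ then yields $\max_iF_i^\circ(x_*;d)\ge\sum\lambda_i^*F_i^\circ(x_*;d)\ge0$.

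\textbf{Main obstacle.} The delicate point is Step 1's localization: ensuring $x_{k+1}\in\Omega\cap\omega_{x_0}$ so that retraction-smoothness and the local Lipschitz bounds apply. This is also where Assumption~\ref{assume3} enters. I would handle it through Lemma~\ref{lem:localization}, with the condition $\tilde L\ge 2M/r_\Omega$ imposed. A secondary technical point is that $\|v_{k_j}\|_{x_{k_j}}$ lives in $T_{x_{k_j}}\mathcal M$ and must be pushed through $(\mathrm{D}R^*)^{-1}$. Uniform bounds on this operator follow from compactness of $\omega_{x_0}$, continuity of $\mathrm{D}R$, and $\|\hat\eta_k\|\le r_\Omega$.
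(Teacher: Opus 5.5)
Your proposal is correct, but it reaches stationarity by a genuinely different route from the paper.

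\textbf{The paper's route.} The paper never writes a KKT system at the inexact step. It introduces the exact minimizer $\eta_k^*$ and uses Lemma~\ref{lem:localization} together with retraction-convexity to make $p_{x_k}$ $\tilde L$-strongly convex on $B(0_{x_k},r_\Omega)$. From this it derives $\|\hat\eta_k-\eta_k^*\|_{x_k}\le\|v_k\|_{x_k}/\tilde L$, hence $\|\eta_k^*\|_{x_k}\le(1+\varepsilon_k/\tilde L)\|\hat\eta_k\|_{x_k}\to0$. It then runs the exact KKT argument at the auxiliary points $y_{k_j}=R_{x_{k_j}}(\eta_{k_j}^*)$, which are not iterates. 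A three-term decomposition with the differentiated-retraction transport shows that the weighted subgradient at $y_{k_j}$ vanishes, and the proof closes by contradiction.

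\textbf{Your route.} You apply the Clarke max and chain rules directly to the residual $v_k\in\partial p_{x_k}(\hat\eta_k)$. This gives a perturbed KKT system at the actual next iterate $x_{k_j+1}$. The proof of Theorem~\ref{thm:convergence} then carries over with one extra term $(\mathrm{D}R_{x_{k_j}}(\hat\eta_{k_j})^*)^{-1}v_{k_j}$, which vanishes.

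\textbf{What each buys.} Your route is shorter. It needs neither existence and uniqueness of $\eta_k^*$ nor convexity of the subproblem; retraction-convexity is used only to localize steps and invoke smoothness on $\Omega$. It also makes clear that only $\|v_k\|_{x_k}\to0$ matters, so bounded $\varepsilon_k$ indeed suffices. The paper's estimate for $\|\eta_k^*\|_{x_k}$ likewise uses only boundedness. The paper's route buys a quantitative comparison between inexact and exact steps, and it reuses the exact KKT condition verbatim.

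Your Step~1 is also more careful than the paper's. The paper applies smoothness in $\omega_{x_0}$ (Assumption~\ref{assume2}) at $R_{x_k}(\hat\eta_k)$ before knowing that this point lies in $\omega_{x_0}$; you localize first.

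One caveat applies equally to both routes. Both need $\tilde L\ge 2M/r_\Omega$, which is not among the theorem's hypotheses. Both also rely on the last assertion of Lemma~\ref{lem:localization}, whose proof bounds $p_x(\eta)$ from below only for $\|\eta\|_x\le r_\Omega$. So that lemma does not by itself rule out admissible steps outside the ball. A lower bound on the $g_i$ would close this, at the cost of a larger threshold on $\tilde L$.
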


\begin{proof}
Let $\eta_k^*$ be the exact minimizer of $p_{x_k}(\eta)$. By the algorithm's acceptance condition, we have $p_{x_k}(\hat{\eta}_k) \le p_{x_k}(0) = 0$, it follows that $\psi_{x_k}(\hat{\eta}_k) \le -\frac{\tilde{L}}{2}\|\hat{\eta}_k\|_{x_k}^2$. By the definition of $\psi_{x_k}$ as a pointwise maximum~\eqref{psi}, this implies that  every component satisfies the bound that
    \begin{equation}\label{eq:inexact-per-component}
        \langle \operatorname{grad} f_i(x_k), \hat{\eta}_k \rangle_{x_k} + g_i(R_{x_k}(\hat{\eta}_k)) - g_i(x_k) \le \psi_{x_k}(\hat{\eta}_k) \le -\frac{\tilde{L}}{2}\|\hat{\eta}_k\|_{x_k}^2, \quad i = 1,\ldots,m.
    \end{equation}
    Combining~\eqref{eq:inexact-per-component} with the $L$-retraction-smoothness of each $f_i$ (Assumption~\ref{assume2}) yields, for every $i = 1,\ldots,m$:
    \begin{equation}
    \begin{aligned}
        F_i(x_{k+1}) &\le F_i(x_k) - \frac{\tilde{L}}{2}\|\hat{\eta}_k\|_{x_k}^2 + \frac{L}{2}\|\hat{\eta}_k\|_{x_k}^2 = F_i(x_k) - \beta\|\hat{\eta}_k\|_{x_k}^2,
    \end{aligned}
    \end{equation}
    where $\beta = (\tilde{L} - L)/2 > 0$. This establishes $F_i(x_k) - F_i(x_{k+1}) \ge \beta\|\hat{\eta}_k\|_{x_k}^2$ for all $i$.

    Fixing any component $i\in\{1,\dots,m\}$, summing the descent inequality from $k=0$ to $N$ and using Assumption~\ref{assume1} gives
    \[
        \sum_{k=0}^{\infty} \|\hat{\eta}_k\|_{x_k}^2 < \infty, \qquad\text{hence}\qquad \lim_{k \to \infty} \|\hat{\eta}_k\|_{x_k} = 0.
    \]

    % To establish the relationship between the inexact step $\hat{\eta}_k$ and the exact minimizer $\eta_k^*$, we must first ensure they are confined to the retraction-convex neighborhood so that the strong convexity of $p_{x_k}$ is rigorously justified. 
    
    Provided that the penalty parameter is chosen such that $\tilde{L} \geq 2M/r_\Omega$, Lemma \ref{lem:localization} guarantees that both $\hat{\eta}_k$ and $\eta_k^*$ are strictly contained within the ball $B(0_{x_k}, r_\Omega)$. Within this localized ball, the mapped points $R_{x_k}(\eta)$ remain in $\Omega$, ensuring that the composition $g_i \circ R_{x_k}$ is a convex function. Since $\psi_{x_k}(\eta)$ is the pointwise maximum of convex functions on this ball, it is also convex. Consequently, the subproblem objective $p_{x_k}(\eta) = \psi_{x_k}(\eta) + \frac{\tilde{L}}{2}\|\eta\|_{x_k}^2$ is well-defined and strictly $\tilde{L}$-strongly convex on the domain $B(0_{x_k}, r_\Omega)$. 
    
    Evaluating the subgradients $v_k \in \partial p_{x_k}(\hat{\eta}_k)$ and $0 \in \partial p_{x_k}(\eta_k^*)$, and applying the Cauchy-Schwarz inequality, we obtain
    \begin{equation}
        \tilde{L}\|\hat{\eta}_k - \eta_k^*\|_{x_k}^2 \leq \langle v_k - 0, \hat{\eta}_k - \eta_k^* \rangle_{x_k} \leq \|v_k\|_{x_k} \|\hat{\eta}_k - \eta_k^*\|_{x_k}.
    \end{equation}
    Dividing by $\tilde{L}\|\hat{\eta}_k - \eta_k^*\|_{x_k}$ yields
    \begin{equation}\label{eq:sc-bound}
        \|\hat{\eta}_k - \eta_k^*\|_{x_k} \leq \frac{1}{\tilde{L}}\|v_k\|_{x_k}.
    \end{equation}

    Using the inexactness criterion $\|v_k\|_{x_k} \leq \varepsilon_k \|\hat{\eta}_k\|_{x_k}$,
    \[
        \|\eta_k^*\|_{x_k} \leq \|\hat{\eta}_k\|_{x_k} + \|\hat{\eta}_k - \eta_k^*\|_{x_k} \leq \|\hat{\eta}_k\|_{x_k} \Bigl( 1 + \frac{\varepsilon_k}{\tilde{L}} \Bigr) \to 0 \quad\text{as } k\to\infty.
    \]

    Let $\{x_{k_j}\}$ be a subsequence converging to $x_*$. Define the auxiliary points $y_{k_j} := R_{x_{k_j}}(\eta_{k_j}^*)$. Since $x_{k_j}\to x_*$ and $\|\eta_{k_j}^*\|_{x_{k_j}}\to0$, we have $y_{k_j}\to x_*$. For large $j$, $y_{k_j}$ lies in a compact neighborhood of $x_*\subseteq\omega_{x_0}$, so $\partial F_i(y_{k_j})$ is uniformly bounded  by Assumption~\ref{assume0} and~\cite[Proposition~2.5]{hosseini_line_2018}.

    At the exact minimizer $\eta_{k_j}^*$, the optimality condition $0\in\partial p_{x_{k_j}}(\eta_{k_j}^*)$ and the Riemannian chain rule for convex functions~\cite{hosseini2011generalized} give multipliers $\lambda^{(k_j)}\in\Delta_m$ and subgradients $\zeta_{i,y_{k_j}}\in\partial g_i(y_{k_j})$ such that
    \begin{equation}\label{eq:kkt-inexact}
        \sum_{i=1}^m \lambda_i^{(k_j)}\operatorname{grad} f_i(x_{k_j}) + \tilde L\eta_{k_j}^* + \sum_{i=1}^m \lambda_i^{(k_j)} (\operatorname{D} R_{x_{k_j}}(\eta_{k_j}^*))^*[\zeta_{i,y_{k_j}}] = 0.
    \end{equation}
    The chain rule $\partial(g_i\circ R_{x_{k_j}})(\eta) \subseteq (\operatorname{D} R_{x_{k_j}}(\eta))^*[\partial g_i(R_{x_{k_j}}(\eta))]$ is valid because $g_i$ is locally Lipschitz and $R_{x_{k_j}}$ is smooth.

    Define that
    \[
        v_{k_j} := \sum_{i=1}^m \lambda_i^{(k_j)}\bigl(\operatorname{grad} f_i(y_{k_j}) + \zeta_{i,y_{k_j}}\bigr) \in T_{y_{k_j}}\mathcal{M}.
    \]
    Let $\mathcal{T}_{k_j}:=\operatorname{D} R_{x_{k_j}}(\eta_{k_j}^*)\colon T_{x_{k_j}}\mathcal{M}\to T_{y_{k_j}}\mathcal{M}$ be the vector transport by differentiated retraction. Applying $\mathcal{T}_{k_j}$ to both sides of~\eqref{eq:kkt-inexact} and subtracting the result from $v_{k_j}$ yields the exact decomposition
    \begin{equation}\label{eq:vk-decomposition-inexact}
    \begin{aligned}
        v_{k_j} &= \sum_{i=1}^m \lambda_i^{(k_j)}\Bigl(\operatorname{grad} f_i(y_{k_j}) - \mathcal{T}_{k_j}\operatorname{grad} f_i(x_{k_j})\Bigr) \\
        &\quad + \sum_{i=1}^m \lambda_i^{(k_j)}\Bigl(\zeta_{i,y_{k_j}} - \mathcal{T}_{k_j}(\operatorname{D} R_{x_{k_j}}(\eta_{k_j}^*))^*[\zeta_{i,y_{k_j}}]\Bigr)
        \;-\; \tilde L\,\mathcal{T}_{k_j}\eta_{k_j}^*.
    \end{aligned}
    \end{equation}
    Now we estimate each term as $j\to\infty$:
    \begin{itemize}
        \item \textbf{First term.} Since $\operatorname{grad} f_i$ is a continuous vector field (by Assumption~\ref{assume2} on the compact set $\omega_{x_0}$) and $\mathcal{T}_{k_j}$ depends smoothly on its argument with $\mathcal{T}_{k_j}\to\operatorname{id}$ as $\|\eta_{k_j}^*\|\to0$, Lemma~\ref{lemma2} gives $\|\operatorname{grad} f_i(y_{k_j})-\mathcal{T}_{k_j}\operatorname{grad} f_i(x_{k_j})\|_{y_{k_j}}\to0$ for each $i$. The convex combination with weights $\lambda_i^{(k_j)}\in[0,1]$ also tends to $0$.
        \item \textbf{Second term.} Because $R_{x_{k_j}}$ is smooth and $\operatorname{D} R_{x_{k_j}}(0_{x_{k_j}})=\operatorname{id}$, we have $(\operatorname{D} R_{x_{k_j}}(\eta_{k_j}^*))^* = \operatorname{id} + \mathcal{O}(\|\eta_{k_j}^*\|)$ and $\mathcal{T}_{k_j}=\operatorname{id}+\mathcal{O}(\|\eta_{k_j}^*\|)$. Consequently $\mathcal{T}_{k_j}(\operatorname{D} R_{x_{k_j}}(\eta_{k_j}^*))^*=\operatorname{id}+\mathcal{O}(\|\eta_{k_j}^*\|)$. Hence
        \[
            \bigl\|\zeta_{i,y_{k_j}}-\mathcal{T}_{k_j}(\operatorname{D} R_{x_{k_j}}(\eta_{k_j}^*))^*[\zeta_{i,y_{k_j}}]\bigr\|_{y_{k_j}}
            \le C\|\eta_{k_j}^*\|_{x_{k_j}}\|\zeta_{i,y_{k_j}}\|_{y_{k_j}}
        \]
        for some constant $C>0$ depending only on the retraction. Since $\|\eta_{k_j}^*\|\to0$ and the subgradients $\zeta_{i,y_{k_j}}$ are uniformly bounded on the compact neighborhood of $x_*$, Term~B converges to $0$.
        \item \textbf{Third term.} $\|\mathcal{T}_{k_j}\eta_{k_j}^*\|_{y_{k_j}}\le\|\mathcal{T}_{k_j}\|\,\|\eta_{k_j}^*\|_{x_{k_j}}\to0$ as $\|\eta_{k_j}^*\|\to0$.
    \end{itemize}
    Therefore, we obtain
    \begin{equation}\label{eq:v-inexact-zero}
        \lim_{j\to\infty}\|v_{k_j}\|_{y_{k_j}} = 0.
    \end{equation}

    For contradiction, suppose that $x_*$ is not Pareto stationary. Then there exist a direction $d\in T_{x_*}\mathcal{M}$ and $\delta>0$ such that $\max_{i=1,\dots,m}F_i^\circ(x_*;d)<-2\delta$.
    Transport $d$ to $y_{k_j}$ via $d_{k_j}:=(\operatorname{D} R_{x_{k_j}}(\eta_{k_j}^*))^{*,-1}\bigl[\mathcal{T}_{x_*,x_{k_j}}(d)\bigr]\in T_{y_{k_j}}\mathcal{M}$ as in Theorem~\ref{thm:convergence}, so that $d_{k_j}\to d$ in the tangent bundle $T\mathcal{M}$ and $\|d_{k_j}\|$ stays bounded. By the upper semicontinuity of Clarke directional derivatives on $T\mathcal{M}$~\cite[Theorem~2.2]{hosseini_line_2018}, for sufficiently large $j$,
    \[
        \max_{i=1,\dots,m} F_i^\circ(y_{k_j}; d_{k_j}) \le -\delta.
    \]
    Since $\operatorname{grad} f_i(y_{k_j})+\zeta_{i,y_{k_j}}\in\partial F_i(y_{k_j})$, each component of $v_{k_j}$ satisfies $\langle\operatorname{grad} f_i(y_{k_j})+\zeta_{i,y_{k_j}},d_{k_j}\rangle\le F_i^\circ(y_{k_j};d_{k_j})\le -\delta$. Weighting and summing with $\lambda_i^{(k_j)}\ge0$ gives
    \[
        \langle v_{k_j}, d_{k_j}\rangle_{y_{k_j}} \le \sum_{i=1}^m \lambda_i^{(k_j)}F_i^\circ(y_{k_j};d_{k_j}) \le -\delta.
    \]
    However,~\eqref{eq:v-inexact-zero} and the boundedness of $\|d_{k_j}\|$ imply $|\langle v_{k_j},d_{k_j}\rangle|\le\|v_{k_j}\|\|d_{k_j}\|\to0$, contradicting the constant negative bound $-\delta$. Therefore $\max_i F_i^\circ(x_*;d)\ge0$ for every $d\in T_{x_*}\mathcal{M}$, i.e., $x_*$ is a Pareto stationary point.
\end{proof}
\begin{remark}
Huang and Wei~\cite{huang_inexact_2023} studied an inexact Riemannian proximal gradient method for single-objective composite optimization, using function-value and subgradient-based inexactness criteria and establishing local convergence rates under the Riemannian Kurdyka--{\L}ojasiewicz property. 
In contrast, we adopt the simpler condition
$
\|v_k\|_{x_k}\le\varepsilon_k\|\hat{\eta}_k\|_{x_k},
$
which suffices to guarantee global Pareto stationarity in the multiobjective setting. Discussing more general inexactness criteria and KL-type local convergence analysis to the multiobjective case is left for future work.
\end{remark}

\section{Method with trust region scheme}
\label{sec: trust region}

In this section, we develop an adaptive multiobjective Riemannian proximal gradient method. Existing Riemannian multiobjective trust-region methods, such as Eslami et al.~\cite{eslami2023trust}, mainly focus on smooth optimization problems with explicit trust-region constraints $\|\eta\|_{x_k}\le\Delta_k$. In contrast, our method addresses nonsmooth composite objectives and adopts a penalty-based trust-region strategy as Zhao et al.~\cite{zhao_proximal_2023}, where a dynamic regularization parameter $\sigma_k$ plays a role analogous to the inverse trust-region radius.

At each iteration $k$, the search direction $\eta_k$ is generated by minimizing the strongly convex quadratic trust-region subproblem on the tangent space $T_{x_k}\mathcal{M}$:
\begin{equation}\label{subproblem2}
    m_{x_k}(\eta, \sigma_k) := \psi_{x_k}(\eta) + \frac{\sigma_k}{2} \big\|\eta \big\|_{x_k}^{2},
\end{equation}
where $\sigma_k > 0$ is the trust-region parameter, and $\psi_{x_k}$ is the multiobjective first-order approximation defined in \eqref{psi}. Since $m_{x_k}(\eta, \sigma_k)$ is a $\sigma_k$-strongly convex function, it admits a unique exact minimizer $\eta_k$.
Once a trial step $\eta_k$ is computed by minimizing $m_{x_k}(\eta, \sigma_k)$, the quality of the model is assessed by the ratio of the actual reduction to the predicted reduction. 
In the multiobjective trust-region scheme, we first solve the subproblem to obtain the search direction $\eta_k$. If $\eta_k = 0$, Lemma \ref{lemma: P-sta} guarantees that $x_k$ is a Pareto stationary point, and the algorithm terminates immediately. Otherwise, assuming $\eta_k \neq 0$, to ensure Pareto descent without degrading any individual objective, the ratio at the $k$-th iteration is evaluated based on the least-decreased component:
\begin{equation}\label{eq rhok}
\rho_k = \frac{\min_{i=1,\dots,m} \{F_i(x_k) - F_i(R_{x_k}(\eta_k))\}}{-m_{x_k}(\eta_k, \sigma_k)}.
\end{equation}
Since $\eta_k$ is the exact minimizer of the $\sigma_k$-strongly convex local model $m_{x_k}$, evaluating the strong convexity inequality at the origin $0 \in T_{x_k}\mathcal{M}$ yields 
\begin{equation}\label{eq:predicted-reduction} 
-m_{x_k}(\eta_k, \sigma_k) \ge \frac{\sigma_k}{2}\|\eta_k\|_{x_k}^2.
\end{equation}
Given that the algorithm only proceeds to evaluate $\rho_k$ when $\eta_k \neq 0$, we have strictly $\|\eta_k\|_{x_k} > 0$. This guarantees that the predicted reduction is well-defined and strictly positive.

    \begin{algorithm}[H]
      \caption{Trust-Region Riemannian Multiobjective Proximal Gradient Method (TR-RMPGM)}
      \label{alg:ts-pg}
        \noindent \textbf{Input:} Initial iterate $x_0 \in \mathcal{M}$; initial penalty parameter $\sigma_0 > \sigma_{\min} > 0$; parameters $0 < \tau_1 < 1 < \tau_2 < \tau_3$ and $0 < s_1 < s_2 < 1$. \\
        1: for $k=0, 1, \ldots$ do\\
        2: \quad Compute $\eta_k = \operatorname{argmin}_{\eta \in T_{x_k}\mathcal{M}} m_{x_k}(\eta, \sigma_k)$; \\
        3: \quad Calculate the ratio $\rho_k$ using~\eqref{eq rhok}; \\
        4: \quad if $\rho_k \ge s_1$ then $x_{k+1} = R_{x_k}(\eta_k)$; else $x_{k+1} = x_k$; \\
        5: \quad Update $\sigma_{k+1}$: \\
            \qquad if $\rho_k \ge s_2$ (very successful), $\sigma_{k+1} \in [\max(\sigma_{\min}, \tau_1 \sigma_k), \sigma_k]$;\\
            \quad else if $\rho_k < s_1$ (unsuccessful), $\sigma_{k+1} \in [\tau_2 \sigma_k, \tau_3 \sigma_k]$;\\
            \quad else (successful), $\sigma_{k+1} \in [\sigma_k, \tau_2 \sigma_k]$. \\
        6: end for
    \end{algorithm}

\subsection{Convergence analysis}
For the convergence analysis of the trust-region scheme, it is convenient to employ the absolute-value formulation of $L$-retraction-smoothness. Under the combination of Assumptions~\ref{assume2} and~\ref{assume3}, the bilateral bound
    \begin{equation}\label{eq: L-retrac-smo}
            \left|f_i(R_x(\eta)) - f_i(x) - \langle\operatorname{grad} f_i(x), \eta\rangle_x \right| \leq \frac{L}{2}\|\eta\|_x^2
    \end{equation}
holds for each $i$. Indeed, Assumption~\ref{assume2} provides the upper estimate $f_i(R_x(\eta)) - f_i(x) - \langle\operatorname{grad} f_i(x),\eta\rangle_x \le \frac{L}{2}\|\eta\|_x^2$, while retraction-convexity gives the lower bound $f_i(R_x(\eta)) - f_i(x) - \langle\operatorname{grad} f_i(x),\eta\rangle_x \ge 0$. Together they imply the two-sided inequality~\eqref{eq: L-retrac-smo}. 

The following lemma establishes the connection between the trust-region subproblem and Pareto stationarity. We omit the proof here, as it closely parallels the previous arguments.
\begin{lemma}\label{lemma: P-sta}
    Let $\{\eta_k\}$ and $\{x_k\}$ be generated by Algorithm \ref{alg:ts-pg}. If $\eta_k = 0$ for some $k$, then $x_k$ is a Pareto stationary point.
\end{lemma}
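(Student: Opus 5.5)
I will follow the proof of part~1 of Theorem~\ref{thm:convergence} essentially verbatim, with $\tilde L$ replaced by the current trust-region parameter $\sigma_k$. The argument there never used the specific value of $\tilde L$ beyond its positivity and finiteness, so the same reasoning applies to $m_{x_k}(\cdot,\sigma_k)$.

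\textbf{Step 1: minimality of the zero step.} Suppose $\eta_k=0$. Since $\eta_k$ is the exact minimizer of $m_{x_k}(\cdot,\sigma_k)$ and $m_{x_k}(0,\sigma_k)=\psi_{x_k}(0)=0$, we have, for every $d\in T_{x_k}\mathcal M$ and every $t>0$,
\[
0\le m_{x_k}(td,\sigma_k)=\max_{i=1,\ldots,m}\Bigl(t\langle \operatorname{grad} f_i(x_k),d\rangle_{x_k}+g_i(R_{x_k}(td))-g_i(x_k)\Bigr)+\frac{\sigma_k}{2}t^2\|d\|_{x_k}^2 .
\]

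\textbf{Step 2: pass to the directional limit.} Divide by $t>0$ and note that the quadratic term becomes $\frac{\sigma_k}{2}t\|d\|^2_{x_k}\to0$. Then take the limit superior as $t\downarrow0$, using that a maximum of finitely many terms commutes with $\limsup$ up to the inequality $\limsup\max\le\max\limsup$. For each $i$, the quotient $(g_i(R_{x_k}(td))-g_i(x_k))/t$ has $\limsup$ bounded above by $g_i^\circ(x_k;d)$. This holds because $R_{x_k}(td)$ is a curve through $x_k$ with initial velocity $d$; we use the Clarke derivative defined through retraction curves, as in the proof of Theorem~\ref{thm:convergence}, together with the local Lipschitz continuity from Assumption~\ref{assume0}, noting $x_k\in\omega_{x_0}$ because accepted steps are descent steps and rejected steps keep $x_{k+1}=x_k$. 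This yields
\[
0\le\max_i\bigl(\langle\operatorname{grad} f_i(x_k),d\rangle_{x_k}+g_i^\circ(x_k;d)\bigr)=\max_i F_i^\circ(x_k;d)
\]
by the sum rule \eqref{eq:clarke-sum-rule}. Since $d$ is arbitrary, \eqref{stationary} holds, and $x_k$ is Pareto stationary.

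\textbf{Main obstacle.} The only delicate point is the inequality $\limsup_{t\downarrow0}\bigl(g_i(R_{x_k}(td))-g_i(x_k)\bigr)/t\le g_i^\circ(x_k;d)$. I would handle it exactly as in Theorem~\ref{thm:convergence}: take $y=x_k$ fixed in the Clarke $\limsup$. Alternatively, I would use geodesic convexity of $g_i$, under which its directional derivative along retraction curves exists and agrees to first order with the geodesic one, since the two curves share the same velocity and $g_i$ is locally Lipschitz. In either case it is bounded by the Clarke derivative.
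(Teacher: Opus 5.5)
Your proposal is correct and matches the paper's approach: the paper omits this proof as paralleling part~1 of Theorem~\ref{thm:convergence}, and your Steps~1--2 reproduce that argument with $\tilde L$ replaced by $\sigma_k$, where the quadratic term vanishes after dividing by $t$. Your extra check that $x_k\in\omega_{x_0}$ is a useful detail the paper leaves implicit. It holds because accepted steps satisfy $\rho_k\ge s_1>0$ with $-m_{x_k}(\eta_k,\sigma_k)\ge 0$, while rejected steps leave $x_k$ unchanged, and it is what makes Assumption~\ref{assume0} and the sum rule \eqref{eq:clarke-sum-rule} available at $x_k$.
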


  We can now establish that whenever $\sigma_k > \sigma_{\text{succ}}$ for a fixed constant $\sigma_{\text{succ}}$, the iteration is necessarily successful.
    \begin{theorem}
    Let $\{x_k\}$ be the sequence generated by Algorithm \ref{alg:ts-pg}. Suppose Assumptions \ref{assume1}, \ref{assume2}, and \ref{assume3} hold, and let $\sigma_{succ} = L / (1 - s_2) > 0$. If $x_k$ is not a Pareto stationary point and $\sigma_k \geq \sigma_{succ}$, then the $k$-th iteration is very successful, i.e., $\rho_k \geq s_2$.
    \end{theorem}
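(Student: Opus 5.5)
We show $1-\rho_k \le 1-s_2$, i.e.\ that the actual reduction is at least $s_2$ times the predicted reduction $-m_{x_k}(\eta_k,\sigma_k)$. Since $x_k$ is not Pareto stationary, Lemma~\ref{lemma: P-sta} gives $\eta_k\neq 0$. Then \eqref{eq:predicted-reduction} makes the predicted reduction strictly positive, with
\[
-m_{x_k}(\eta_k,\sigma_k)\ge \tfrac{\sigma_k}{2}\|\eta_k\|_{x_k}^2,
\]
so $\rho_k$ is well defined.

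\textbf{Step 1: compare actual decrease with the model.} Fix an index $i$ and write
\[
\varphi_i(x_k,\eta_k)=\langle \operatorname{grad} f_i(x_k),\eta_k\rangle_{x_k}+g_i(R_{x_k}(\eta_k))-g_i(x_k).
\]
The $g_i$ terms cancel exactly, so
\[
F_i(x_k)-F_i(R_{x_k}(\eta_k)) = -\varphi_i(x_k,\eta_k) - \bigl(f_i(R_{x_k}(\eta_k))-f_i(x_k)-\langle\operatorname{grad} f_i(x_k),\eta_k\rangle_{x_k}\bigr).
\]
The bracket is at most $\tfrac L2\|\eta_k\|_{x_k}^2$ by Assumption~\ref{assume2} (or by \eqref{eq: L-retrac-smo}). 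Since $\varphi_i(x_k,\eta_k)\le \psi_{x_k}(\eta_k)$, this yields
\[
F_i(x_k)-F_i(R_{x_k}(\eta_k)) \ge -\psi_{x_k}(\eta_k)-\tfrac L2\|\eta_k\|^2 = -m_{x_k}(\eta_k,\sigma_k)+\tfrac{\sigma_k-L}{2}\|\eta_k\|^2 .
\]
Taking the minimum over $i$ gives the same lower bound for the numerator of \eqref{eq rhok}. A subtlety is that Assumption~\ref{assume2} is stated only on $\omega_{x_0}$, so $R_{x_k}(\eta_k)$ must lie in the region where the smoothness inequality is valid. I would handle this as in Lemma~\ref{lem:localization}: since $m_{x_k}(\eta_k,\sigma_k)\le 0$, the step satisfies $\|\eta_k\|\le 2M/\sigma_k$. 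For $\sigma_k$ large this step stays in the region where Assumption~\ref{assume3} gives \eqref{eq: L-retrac-smo}. I would note that $\sigma_{succ}$ may need to be enlarged to accommodate this localization, or else assume the smoothness holds on $\Omega$ as in Assumption~\ref{assume3}. This localization is the main technical obstacle; the algebra itself is routine.

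\textbf{Step 2: bound the ratio.} Dividing by the predicted reduction gives
\[
1-\rho_k \le \frac{\tfrac{L-\sigma_k}{2}\|\eta_k\|^2}{-m_{x_k}(\eta_k,\sigma_k)}.
\]
If $\sigma_k\ge L$, the right-hand side is nonpositive, so $\rho_k\ge 1\ge s_2$ already. To stay robust, I would instead use the cruder estimate that avoids the sign cancellation. Dropping the favourable term gives $1-\rho_k\le \tfrac{L}{2}\|\eta_k\|^2/(-m_{x_k})$. Combining this with \eqref{eq:predicted-reduction} gives
\[
1-\rho_k\le \frac{L}{\sigma_k}\le \frac{L}{\sigma_{succ}}=1-s_2 .
\]
This is exactly why $\sigma_{succ}=L/(1-s_2)$ appears. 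Hence $\rho_k\ge s_2$, and the iteration is very successful.
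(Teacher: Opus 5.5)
Your proof is correct and follows essentially the same route as the paper. Both arguments use the componentwise bound $F_i(x_k)-F_i(R_{x_k}(\eta_k))\ge-\psi_{x_k}(\eta_k)-\frac L2\|\eta_k\|_{x_k}^2$, which comes from the upper retraction-smoothness inequality and $\varphi_i\le\psi_{x_k}$, and then the predicted-reduction bound \eqref{eq:predicted-reduction} to get $\rho_k\ge 1-L/\sigma_k\ge s_2$; your remark that $\rho_k\ge1$ once $\sigma_k\ge L$ is just the excess the paper discards when it notes $-\psi_{x_k}(\eta_k)/(-m_{x_k}(\eta_k,\sigma_k))>1$. The localization issue you flag is real, but the paper does not address it either and simply applies \eqref{eq: L-retrac-smo} at $R_{x_k}(\eta_k)$; your proposed fix via Lemma~\ref{lem:localization} would need more care, because its bound $\psi_x(\eta)\ge -M\|\eta\|_x$ is only established for $\|\eta\|_x\le r_\Omega$, so concluding $\|\eta_k\|_{x_k}\le 2M/\sigma_k$ from it is circular.
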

    \begin{proof}
        % Given that $\eta_k$ is the exact minimizer of $m_{x_k}$, evaluating the strong convexity inequality at the origin $0 \in T_{x_k}\mathcal{M}$ directly yields:
        % \begin{equation}
        %     m_{x_k}(0, \sigma_k) \ge m_{x_k}(\eta_k, \sigma_k) + \frac{\sigma_k}{2}\|\eta_k - 0\|_{x_k}^2.
        % \end{equation}
        % Since $m_{x_k}(0, \sigma_k) = \psi_{x_k}(0) = 0$, this directly bounds the predicted reduction:
        % \begin{equation}\label{eq:pred_red}
        %     \red{-m_{x_k}(\eta_k, \sigma_k) \ge \frac{\sigma_k}{2}\|\eta_k\|_{x_k}^2.}
        % \end{equation}

        By Lemma \ref{lemma: P-sta}, $\eta_k \neq 0$ if $x_k$ is not stationary. 
        For each individual component $i$,  form~\eqref{eq: L-retrac-smo}, we have the lower bound
        \[
            f_i(x_k) - f_i(R_{x_k}(\eta_k)) \ge -\langle \operatorname{grad} f_i(x_k), \eta_k\rangle_{x_k} - \frac{L}{2}\|\eta_k\|_{x_k}^2.
        \]
        Adding $g_i(x_k) - g_i(R_{x_k}(\eta_k))$ to both sides, we obtain for \emph{every} $i = 1,\dots,m$:
        \begin{equation}\label{eq:component-wise-lower}
            F_i(x_k) - F_i(R_{x_k}(\eta_k)) \ge -\Bigl(\langle \operatorname{grad} f_i(x_k), \eta_k\rangle_{x_k} + g_i(R_{x_k}(\eta_k)) - g_i(x_k)\Bigr) - \frac{L}{2}\|\eta_k\|_{x_k}^2.
        \end{equation}
        From the definition of $\psi_{x_k}(\eta_k)$ in~\eqref{psi}, we have for every $i$: $\langle \operatorname{grad} f_i(x_k), \eta_k\rangle_{x_k} + g_i(R_{x_k}(\eta_k)) - g_i(x_k) \le \psi_{x_k}(\eta_k)$. Therefore the bracketed term in~\eqref{eq:component-wise-lower} is bounded above by $\psi_{x_k}(\eta_k)$, then
        \begin{equation}\label{eq:component-wise-uniform}
            F_i(x_k) - F_i(R_{x_k}(\eta_k)) \ge -\psi_{x_k}(\eta_k) - \frac{L}{2}\|\eta_k\|_{x_k}^2, \qquad \forall\, i = 1,\dots,m.
        \end{equation}
        Taking the minimum over all $i$ preserves the inequality:
        \begin{equation}\label{eq:actual-lower}
            \min_{i=1,\dots,m}\{F_i(x_k) - F_i(R_{x_k}(\eta_k))\} \ge -\psi_{x_k}(\eta_k) - \frac{L}{2}\|\eta_k\|_{x_k}^2.
        \end{equation}
        
        Now evaluate the ratio $\rho_k$ using the actual reduction bound~\eqref{eq:actual-lower} and the predicted reduction $-m_{x_k}(\eta_k, \sigma_k)$, recall~\eqref{eq:predicted-reduction} that $-m_{x_k}(\eta_k, \sigma_k) \ge \frac{\sigma_k}{2}\|\eta_k\|_{x_k}^2>0$, we have
        \begin{equation}\label{eq:ratio-one-sided}
        \begin{aligned}
            \rho_k \;=\; \frac{\min_i\{F_i(x_k) - F_i(R_{x_k}(\eta_k))\}}{-m_{x_k}(\eta_k, \sigma_k)}
            &\;\ge\; \frac{-\psi_{x_k}(\eta_k) - \frac{L}{2}\|\eta_k\|_{x_k}^2}{-m_{x_k}(\eta_k, \sigma_k)} \\
            &\;=\; \frac{-\psi_{x_k}(\eta_k)}{-m_{x_k}(\eta_k, \sigma_k)} - \frac{L}{2} \frac{\|\eta_k\|_{x_k}^2}{-m_{x_k}(\eta_k, \sigma_k)}.
        \end{aligned}
        \end{equation}
        Since $-m_{x_k}(\eta_k, \sigma_k) = -\psi_{x_k}(\eta_k) - \frac{\sigma_k}{2}\|\eta_k\|_{x_k}^2 \le -\psi_{x_k}(\eta_k)$, the first term $\frac{-\psi_{x_k}(\eta_k)}{-m_{x_k}(\eta_k, \sigma_k)} > 1$. For the second term, $x_k$ is not Pareto stationary, so Lemma~\ref{lemma: P-sta} implies $\eta_k\neq0$, rendering the denominator strictly positive. 
        % Substituting the strong convexity bound~\eqref{eq:predicted-reduction} (which asserts $-m_{x_k}(\eta_k, \sigma_k) \ge \frac{\sigma_k}{2}\|\eta_k\|_{x_k}^2$), we obtain
        Then,
        \begin{equation}
            \rho_k \ge 1 - \frac{L}{2} \frac{2}{\sigma_k} = 1 - \frac{L}{\sigma_k}.
        \end{equation}
        Thus, whenever $\sigma_k \ge \frac{L}{1-s_2}$, we are guaranteed that $\rho_k \ge 1 - (1-s_2) = s_2$, meaning the $k$-th iteration is very successful.
    \end{proof}

    This theorem ensures that Algorithm \ref{alg:ts-pg} prevents infinite cycling within its inner loop. We now show that the algorithm globally converges to a Pareto stationary point.

    \begin{theorem}\label{thm:tr-finite}
        Let $\{x_k\}$ be the sequence generated by Algorithm \ref{alg:ts-pg}. Suppose Assumptions \ref{assume0}, \ref{assume1}, \ref{assume2}, and \ref{assume3} are satisfied. If Algorithm \ref{alg:ts-pg} generates only finitely many successful iterations, then $x_k = x^*$ for all sufficiently large $k$, where $x^*$ is a Pareto stationary point.
    \end{theorem}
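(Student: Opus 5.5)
If only finitely many iterations are successful, there is an index $K$ such that every iteration $k\ge K$ is unsuccessful, i.e.\ $\rho_k<s_1$. The proof then proceeds in four steps.

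\textbf{Step 1: the iterates freeze.} Step~4 of Algorithm~\ref{alg:ts-pg} sets $x_{k+1}=x_k$ for every $k\ge K$. Hence $x_k=x_K=:x^*$ for all $k\ge K$, which is the first claim.

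\textbf{Step 2: the penalty parameter grows.} Step~5 of the update rule gives $\sigma_{k+1}\ge\tau_2\sigma_k$ for $k\ge K$, with $\tau_2>1$. Therefore $\sigma_k\ge\tau_2^{\,k-K}\sigma_K\to\infty$.

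\textbf{Step 3: stationarity by contradiction.} Suppose $x^*$ is not Pareto stationary. By Lemma~\ref{lemma: P-sta}, the minimizer $\eta_k$ of $m_{x^*}(\cdot,\sigma_k)$ is nonzero for each $k\ge K$, so $\rho_k$ is well defined. All the ingredients of the preceding theorem are now available at the fixed point $x_k=x^*$:
\begin{itemize}
\item the two-sided bound \eqref{eq: L-retrac-smo}, which follows from Assumptions~\ref{assume2} and~\ref{assume3};
\item the predicted-reduction bound \eqref{eq:predicted-reduction}.
\end{itemize}
The preceding theorem then gives $\rho_k\ge 1-L/\sigma_k$. Since $\sigma_k\to\infty$, there is a $k\ge K$ with $\sigma_k\ge\sigma_{succ}=L/(1-s_2)$, and for that $k$ we get $\rho_k\ge s_2>s_1$. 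The iteration is then very successful, contradicting the choice of $K$. Hence $x^*$ is Pareto stationary.

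\textbf{Step 4: the main obstacle.} The delicate point is checking that the hypotheses of the preceding theorem hold at the frozen point. Two things must be confirmed.
\begin{itemize}
\item \emph{$x^*$ lies in the sublevel set.} Iterates change only on successful steps, where $\rho_k\ge s_1>0$. This forces $F_i(x_{k+1})<F_i(x_k)$ for every $i$, so $F(x^*)\preceq F(x_0)$ and $x^*\in\omega_{x_0}$.
\item \emph{The smoothness inequalities apply to $R_{x^*}(\eta_k)$.} The estimates \eqref{eq: L-retrac-smo} must be valid at the trial point, which requires $R_{x^*}(\eta_k)\in\Omega$. The plan is to argue as in Lemma~\ref{lem:localization}: $m_{x^*}(\eta_k,\sigma_k)\le 0$ together with the linear lower bound \eqref{eq:localization_bound} gives $\|\eta_k\|_{x^*}\le 2M/\sigma_k$. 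As $\sigma_k\to\infty$ this tends to $0$, so eventually $\eta_k\in B(0_{x^*},r_\Omega)$ and $R_{x^*}(\eta_k)\in\Omega$.
\end{itemize}
With these two facts in hand, the contradiction argument of Step~3 is rigorous, and $x^*$ is Pareto stationary.
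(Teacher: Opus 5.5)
Your proof follows the paper's argument: the iterate freezes at $x^*$, $\sigma_k\to\infty$ geometrically, and once $\sigma_k\ge\sigma_{succ}$ the preceding theorem forces a very successful step, which together with Lemma~\ref{lemma: P-sta} contradicts non-stationarity. Your choice of $K$ as the index after which every step fails is also cleaner than the paper's, which sets $x^*=x_K$ with $K$ the last \emph{successful} index.

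Your Step~4 adds checks the paper omits, but it is circular as written. The bound \eqref{eq:localization_bound} is only established for $\|\eta\|_x\le r_\Omega$, so you cannot use it to derive $\|\eta_k\|_{x^*}\le 2M/\sigma_k$ and then conclude $\eta_k\in B(0_{x^*},r_\Omega)$. Instead, cite the stated final conclusion of Lemma~\ref{lem:localization} with $\tilde L$ replaced by $\sigma_k\ge 2M/r_\Omega$. Alternatively, use a global lower bound such as $g_i$ bounded below, which gives $\|\eta_k\|_{x^*}=\mathcal{O}(\sigma_k^{-1/2})$ directly.
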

    \begin{proof}
        Suppose the last successful iteration occurs at index $K$. Then for all $k \ge K$, we have $x_k = x^* := x_K$. Since every iteration $k \ge K$ is unsuccessful, the update rule dictates $\sigma_{k+1} \in [\tau_2 \sigma_k, \tau_3 \sigma_k]$ with $\tau_2 > 1$, hence $\sigma_k \to \infty$.
        Suppose for contradiction that $x^*$ is not Pareto stationary. Then $\eta_k \neq 0$ for all $k \ge K$ by Lemma~\ref{lemma: P-sta}. Since $\sigma_k \to \infty$, eventually $\sigma_k \ge \sigma_{\text{succ}} = L/(1-s_2)$. By the preceding theorem, this iteration must be very successful, contradicting the assumption that all iterations $k \ge K$ are unsuccessful. Therefore, $x^*$ must be a Pareto stationary point.
    \end{proof}

    We now consider the case of infinitely many successful iterations. The following result ensures the global convergence of the algorithm to a Pareto stationary point.

    \begin{proposition}[Uniform boundedness of $\sigma_k$]\label{prop:sigma-bound}
        Under the conditions of Algorithm~\ref{alg:ts-pg}, the penalty parameter satisfies $\sigma_k \le \sigma_{\max}$ for all $k$, where $\sigma_{\max} := \max\{\sigma_0,\, \tau_3 \sigma_{\text{succ}}\}$ and $\sigma_{\text{succ}} = L/(1-s_2)$.
    \end{proposition}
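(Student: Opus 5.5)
The plan is a straightforward induction on $k$ that tracks the update rule in Step 5 of Algorithm~\ref{alg:ts-pg}. The key input is the preceding theorem: whenever $x_k$ is not Pareto stationary and $\sigma_k \ge \sigma_{\text{succ}}$, iteration $k$ is very successful, so $\sigma_{k+1}\le\sigma_k$. If some $x_k$ is Pareto stationary, then $\eta_k=0$ (the strongly convex model $m_{x_k}(\cdot,\sigma_k)$ has the unique minimizer $0$, since stationarity gives $\max_i F_i^\circ(x_k;d)\ge 0$ for all $d$). The algorithm then terminates by the convention stated before \eqref{eq rhok}, so no further updates of $\sigma$ occur and there is nothing to prove beyond index $k$.

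The induction hypothesis is $\sigma_k\le\sigma_{\max}$; the base case $\sigma_0\le\sigma_{\max}$ holds by definition. For the inductive step I split into two cases according to the size of $\sigma_k$.

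\emph{Case $\sigma_k\ge\sigma_{\text{succ}}$.} The preceding theorem gives $\rho_k\ge s_2$, so $\sigma_{k+1}\in[\max(\sigma_{\min},\tau_1\sigma_k),\sigma_k]$. Hence $\sigma_{k+1}\le\sigma_k\le\sigma_{\max}$.

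\emph{Case $\sigma_k<\sigma_{\text{succ}}$.} Whatever the value of $\rho_k$, every branch of Step 5 gives $\sigma_{k+1}\le\tau_3\sigma_k$: the successful branch gives at most $\tau_2\sigma_k\le\tau_3\sigma_k$, and the very successful branch gives at most $\sigma_k\le\tau_3\sigma_k$. Since $\tau_3>1$, this yields $\sigma_{k+1}<\tau_3\sigma_{\text{succ}}\le\sigma_{\max}$.

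Combining both cases closes the induction and proves the bound for all $k$.

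The only delicate point is justifying that the preceding theorem applies. It requires Assumptions~\ref{assume1}--\ref{assume3}, in particular the two-sided bound \eqref{eq: L-retrac-smo} at $x_k$. This in turn needs $x_k\in\omega_{x_0}$ (or $\Omega$), which holds by induction: accepted steps satisfy $\rho_k\ge s_1>0$, so every $F_i$ decreases and the iterates stay in the sublevel set. I would state this explicitly, taking the proposition's hypotheses to be those of the preceding theorem, and otherwise treat the argument as a routine case check.
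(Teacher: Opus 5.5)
Your induction is correct and is essentially the paper's argument. The paper splits on the outcome of $\rho_k$ (very successful / successful / unsuccessful) and uses the preceding theorem contrapositively to get $\sigma_k<\sigma_{\text{succ}}$ in the last two cases, whereas you split directly on whether $\sigma_k\ge\sigma_{\text{succ}}$, which is the same logic reorganized. Your treatment of the stationary edge case goes beyond what the paper gives, but it can be simplified: the proof of the preceding theorem only uses $\eta_k\neq0$, and the algorithm stops when $\eta_k=0$, so you do not need the converse of Lemma~\ref{lemma: P-sta}.
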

    \begin{proof}
        We show by induction that $\sigma_k \le \sigma_{\max}$ for all $k$. The base case $\sigma_0 \le \sigma_{\max}$ holds by definition. For the inductive step, suppose $\sigma_k \le \sigma_{\max}$. There are three cases:
        \begin{itemize}
            \item \textbf{Very successful} ($\rho_k \ge s_2$): $\sigma_{k+1} \le \sigma_k \le \sigma_{\max}$.
            \item \textbf{Successful} ($s_1 \le \rho_k < s_2$): $\sigma_{k+1} \le \tau_2 \sigma_k$. If $\sigma_k < \sigma_{\text{succ}}$, this is allowed. If $\sigma_k \ge \sigma_{\text{succ}}$, the preceding theorem guarantees $\rho_k \ge s_2$, contradicting $\rho_k < s_2$. So this case only occurs when $\sigma_k < \sigma_{\text{succ}}$, and $\sigma_{k+1} \le \tau_2 \sigma_{\text{succ}} < \tau_3 \sigma_{\text{succ}} \le \sigma_{\max}$.
            \item \textbf{Unsuccessful} ($\rho_k < s_1$): $\sigma_{k+1} \le \tau_3 \sigma_k$. Similarly, $\sigma_k < \sigma_{\text{succ}}$, otherwise the iteration would be very successful, so $\sigma_{k+1} \le \tau_3 \sigma_{\text{succ}} \le \sigma_{\max}$.
        \end{itemize}
        In all cases, $\sigma_{k+1} \le \sigma_{\max}$, completing the induction.
    \end{proof}
\begin{lemma}\label{lemma:monotonicity}
    Let $\psi_x \colon T_x\mathcal{M} \to \mathbb{R} \cup \{+\infty\}$ be a proper, lower semicontinuous, and convex function. Let $\eta(\sigma)$ be the unique minimizer of $m_x(\eta, \sigma) := \psi_x(\eta) + \frac{\sigma}{2}\|\eta\|_x^2$ for $\sigma > 0$. Then for any $0 < \sigma_1 \le \sigma_2$, we have
    \begin{equation}
        \|\eta(\sigma_1)\|_x \le \frac{\sigma_2}{\sigma_1} \|\eta(\sigma_2)\|_x.
    \end{equation}
\end{lemma}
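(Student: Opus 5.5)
The plan is to compare the first-order optimality conditions of the two strongly convex subproblems through the monotonicity of $\partial\psi_x$, and then reduce everything to a scalar quadratic inequality in the norms. Throughout, write $\eta_1:=\eta(\sigma_1)$, $\eta_2:=\eta(\sigma_2)$, $a:=\|\eta_1\|_x$ and $b:=\|\eta_2\|_x$; all objects live in the single inner-product space $T_x\mathcal M$, so no transport is needed.

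\textbf{Step 1 (optimality conditions).} Since $\psi_x$ is proper, lower semicontinuous and convex, and $\frac{\sigma}{2}\|\cdot\|_x^2$ is smooth and convex, the convex sum rule gives
\[
-\sigma_1\eta_1\in\partial\psi_x(\eta_1),\qquad -\sigma_2\eta_2\in\partial\psi_x(\eta_2).
\]

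\textbf{Step 2 (monotonicity).} The subdifferential of a convex function is a monotone operator, so
\[
\langle -\sigma_1\eta_1+\sigma_2\eta_2,\;\eta_1-\eta_2\rangle_x\ge 0 .
\]
Expanding the inner product and rearranging gives
\[
\sigma_1 a^2+\sigma_2 b^2\le(\sigma_1+\sigma_2)\langle\eta_1,\eta_2\rangle_x .
\]
Applying Cauchy--Schwarz to the right-hand side then yields
\[
\sigma_1 a^2-(\sigma_1+\sigma_2)ab+\sigma_2 b^2\le 0 .
\]

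\textbf{Step 3 (scalar conclusion).} The quadratic on the left factors exactly as $(\sigma_1 a-\sigma_2 b)(a-b)\le 0$. Hence $a$ lies between $b$ and $\frac{\sigma_2}{\sigma_1}b$. Because $\sigma_2/\sigma_1\ge1$, this gives
\[
a\le \max\Bigl\{b,\tfrac{\sigma_2}{\sigma_1}b\Bigr\}=\tfrac{\sigma_2}{\sigma_1}b,
\]
which is the claim. The same argument also shows the companion bound $b\le a$, although it is not needed here.

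\textbf{Expected difficulty.} The only delicate point is justifying Step 1 when $\psi_x$ is extended-valued. In that case one uses the Moreau--Rockafellar sum rule, which applies because the quadratic term is finite and continuous everywhere. In the paper's setting $\psi_x$ is convex only on the localized ball of Lemma~\ref{lem:localization}. There, however, the lemma's hypothesis of convexity on all of $T_x\mathcal M$ is simply assumed, so this causes no problem. The factorization in Step 3 is the key observation that makes the rest routine.
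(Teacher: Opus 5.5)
Your proof is correct and follows the paper's argument essentially line for line: the optimality conditions $-\sigma_j\eta_j\in\partial\psi_x(\eta_j)$, monotonicity of $\partial\psi_x$, Cauchy--Schwarz, and the factorization $(\|\eta_1\|_x-\|\eta_2\|_x)(\sigma_1\|\eta_1\|_x-\sigma_2\|\eta_2\|_x)\le 0$. Your explicit case analysis, which also yields $\|\eta_2\|_x\le\|\eta_1\|_x$, spells out the final step that the paper compresses into ``since $\sigma_1\le\sigma_2$''.
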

\begin{proof}
    For $j\in\{1,2\}$, let $\eta_j=\eta(\sigma_j)$. The optimality condition gives
    $
        0\in \partial\psi_x(\eta_j)+\sigma_j\eta_j,
    $
    so there exists $u_j\in\partial\psi_x(\eta_j)$ such that
    $
        u_j=-\sigma_j\eta_j.
    $
    By the monotonicity of the convex subdifferential,
    \[
        \langle u_1-u_2,\eta_1-\eta_2\rangle_x\ge0.
    \]
    Substituting the expressions for $u_1$ and $u_2$ yields
    \[
        \langle -\sigma_1\eta_1+\sigma_2\eta_2,\eta_1-\eta_2\rangle_x\ge0,
    \]
    and therefore
    \[
        \sigma_1\|\eta_1\|_x^2+\sigma_2\|\eta_2\|_x^2
        \le
        (\sigma_1+\sigma_2)\|\eta_1\|_x\|\eta_2\|_x.
    \]
    Equivalently,
    \[
        (\|\eta_1\|_x-\|\eta_2\|_x)
        (\sigma_1\|\eta_1\|_x-\sigma_2\|\eta_2\|_x)
        \le0.
    \]
    Since $\sigma_1\le\sigma_2$, it follows that
    \[
        \|\eta_1\|_x
        \le
        \frac{\sigma_2}{\sigma_1}\|\eta_2\|_x.
    \]
\end{proof}
    \begin{theorem}\label{thm:tr-infinite}
        Let $\{x_k\}$ be the sequence generated by Algorithm \ref{alg:ts-pg}. Suppose Assumptions \ref{assume0}, \ref{assume1}, \ref{assume2}, and \ref{assume3} are satisfied. If Algorithm \ref{alg:ts-pg} generates infinitely many successful iterations, then
        $$
        \lim _{k \rightarrow \infty}\left\|\eta_k\right\|_{x_k}=0,
        $$
        and any accumulation point $x_*$ of $\{x_k\}$ is a Pareto stationary point.
    \end{theorem}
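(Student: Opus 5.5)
The plan is to combine a sufficient-decrease estimate on successful iterations with Proposition~\ref{prop:sigma-bound} and Lemma~\ref{lemma:monotonicity}. These together show that $\|\eta_k\|_{x_k}\to0$ along \emph{all} iterations. Stationarity of accumulation points then follows by repeating the KKT/Clarke argument of Theorem~\ref{thm:convergence}, Part~3, with $\tilde L$ replaced by the bounded parameter $\sigma_k$.

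\textbf{Step 1 (decrease on successful steps).} First note that $\sigma_k\ge\sigma_{\min}$ for all $k$. This holds because $\sigma_0>\sigma_{\min}$, very successful updates keep $\sigma_{k+1}\ge\max(\sigma_{\min},\tau_1\sigma_k)$, and the other two cases do not decrease $\sigma$. On a successful iteration ($\rho_k\ge s_1$), the definition \eqref{eq rhok} and \eqref{eq:predicted-reduction} give, for every $i$,
\[
F_i(x_k)-F_i(x_{k+1})\ \ge\ s_1\bigl(-m_{x_k}(\eta_k,\sigma_k)\bigr)\ \ge\ \tfrac{s_1\sigma_{\min}}{2}\|\eta_k\|_{x_k}^2 .
\]
On unsuccessful iterations $x_{k+1}=x_k$. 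Hence each $F_i(x_k)$ is nonincreasing, so $x_k\in\omega_{x_0}$, and by Assumption~\ref{assume1} the sum of $\|\eta_k\|_{x_k}^2$ over the successful indices is finite. In particular $\|\eta_k\|_{x_k}\to0$ along successful iterations.

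\textbf{Step 2 (unsuccessful iterations).} Take an unsuccessful index $k$. Let $k'>k$ be the next successful index, which exists because there are infinitely many successful iterations. Between $k$ and $k'$ the iterate is frozen, $x_k=x_{k'}$, and $\sigma$ strictly increases, so $\sigma_k\le\sigma_{k'}$. Applying Lemma~\ref{lemma:monotonicity} at the fixed point $x_k$ gives
\[
\|\eta_k\|_{x_k}\le \frac{\sigma_{k'}}{\sigma_k}\|\eta_{k'}\|_{x_{k'}}\le\frac{\sigma_{\max}}{\sigma_{\min}}\|\eta_{k'}\|_{x_{k'}}.
\]
The lemma needs $\psi_{x_k}$ convex. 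I expect this to be the main technical obstacle, and I would handle it through Assumption~\ref{assume3} together with the localization of Lemma~\ref{lem:localization}, which keeps the relevant steps in the ball where $g_i\circ R_{x_k}$ is convex.

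Next, the length of a run of unsuccessful iterations is uniformly bounded. Each such step multiplies $\sigma$ by at least $\tau_2>1$, while Proposition~\ref{prop:sigma-bound} caps $\sigma$ at $\sigma_{\max}$. Therefore $k'-k\le N$ for a fixed $N$, so $k'\to\infty$ as $k\to\infty$, and Step~1 yields $\|\eta_k\|_{x_k}\to0$ over the whole sequence.

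\textbf{Step 3 (stationarity).} Let $x_*$ be an accumulation point. Since the iterates are constant on unsuccessful runs, $x_*$ is also the limit of a subsequence $\{x_{k_j}\}$ of successful iterates. From here I would replay the proof of Theorem~\ref{thm:convergence}, Part~3, verbatim with $\tilde L$ replaced by $\sigma_{k_j}$. The KKT condition gives multipliers $\lambda^{(k_j)}\in\Delta_m$ and subgradients $\zeta_{i,x_{k_j+1}}$ satisfying
\[
\sum_i\lambda_i^{(k_j)}\operatorname{grad} f_i(x_{k_j})+\sigma_{k_j}\eta_{k_j}+\sum_i\lambda_i^{(k_j)}\mathrm{D}R_{x_{k_j}}(\eta_{k_j})^*[\zeta_{i,x_{k_j+1}}]=0 .
\]
Because $\sigma_{k_j}\le\sigma_{\max}$, the term $\sigma_{k_j}\eta_{k_j}$ tends to $0$. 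Lemma~\ref{lemma2} then gives that $v_{k_j}$, the corresponding convex combination of elements of $\partial F_i(x_{k_j+1})$, tends to $0$. Passing to a limit $\lambda^*$ of the multipliers and using upper semicontinuity of $F_i^\circ$ on $T\mathcal M$ gives $\max_i F_i^\circ(x_*;d)\ge0$ for every $d\in T_{x_*}\mathcal M$, which is Pareto stationarity.
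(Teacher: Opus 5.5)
Your proposal is correct and follows essentially the paper's own proof: sufficient decrease with $\sigma_{\min}$ on successful steps, then Lemma~\ref{lemma:monotonicity} at the frozen iterate with $\sigma_k\le\sigma_{k'}\le\sigma_{\max}$ to carry $\|\eta_k\|_{x_k}\to0$ over to unsuccessful steps, and finally the argument of Theorem~\ref{thm:convergence}, Part~3, with $\tilde L$ replaced by the bounded $\sigma_{k_j}$ (restricting to successful iterates so that $x_{k_j+1}=R_{x_{k_j}}(\eta_{k_j})$ is a small refinement). On the obstacle you flag, the paper simply assumes convexity of $\psi_{x_k}$ (it asserts that $m_{x_k}(\cdot,\sigma_k)$ is $\sigma_k$-strongly convex), and Lemma~\ref{lem:localization} could not supply it unconditionally, because its confinement conclusion needs $\sigma_k\ge 2M/r_\Omega$ while the algorithm only guarantees $\sigma_k\ge\sigma_{\min}$.
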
 
    \begin{proof}

        Let $k \in \mathbb{S}$ be a successful iteration, i.e., $\rho_k \ge s_1$. By the definition of $\rho_k$ and the strong convexity of the local model, we have
$$
\min_{i=1,\ldots,m} \{F_i(x_k) - F_i(x_{k+1})\} \ge s_1 ( -m_{x_k}(\eta_k, \sigma_k) ) \ge \frac{s_1 \sigma_k}{2} \|\eta_k\|_{x_k}^2 \ge \frac{s_1 \sigma_{\min}}{2} \|\eta_k\|_{x_k}^2.
$$

Hence, for every $i=1,\dots,m$ and every successful iteration $k\in\mathbb S$,
\[
F_i(x_k)-F_i(x_{k+1})
\ge
\frac{s_1\sigma_{\min}}{2}\|\eta_k\|_{x_k}^2.
\]
Since each objective function $F_i$ is bounded from below by $F_i^{low}$ (Assumption \ref{assume1}), summing the above inequality over all iterations from $k=0$ to $K$ and letting $K \to \infty$ yields
$$
F_i(x_0) - F_i^{low} \ge \sum_{k=0}^\infty (F_i(x_k) - F_i(x_{k+1})) \ge \frac{s_1 \sigma_{\min}}{2} \sum_{k \in \mathbb{S}} \|\eta_k\|_{x_k}^2.
$$
Therefore, it immediately follows that $\sum_{k \in \mathbb{S}} \|\eta_k\|_{x_k}^2 < \infty$. Consequently, we obtain $\lim_{k \to \infty, k \in \mathbb{S}} \|\eta_k\|_{x_k} = 0$.

Next, we control the unsuccessful iterations $k \notin \mathbb{S}$. Because $x_{k+1} = x_k$, the base point remains unchanged. Let $\ell(k) \in \mathbb{S}$ be the first successful iteration after $k$. Since the number of consecutive unsuccessful iterations is strictly bounded by $\lceil \log_{\tau_2}(\sigma_{\max}/\sigma_{\min})\rceil$, we have $\sigma_k \le \sigma_{\ell(k)} \le \sigma_{\max}$ and $x_k = x_{\ell(k)}$. 
Applying Lemma~\ref{lemma:monotonicity} gives
\[
\|\eta_k\|_{x_k}
\le
\frac{\sigma_{\ell(k)}}{\sigma_k}
\|\eta_{\ell(k)}\|_{x_{\ell(k)}}
\le
\frac{\sigma_{\max}}{\sigma_{\min}}
\|\eta_{\ell(k)}\|_{x_{\ell(k)}}.
\]
Since $\ell(k)\to\infty$ as $k\to\infty$ and
$\|\eta_\ell\|_{x_\ell}\to0$ along successful iterations,
we conclude that
\[
\lim_{k\to\infty}\|\eta_k\|_{x_k}=0.
\]

        Let $x_*$ be an accumulation point of $\{x_k\}$ and let $\{x_{k_j}\}$ be a subsequence converging to $x_*$. Since $\|\eta_{k_j}\|_{x_{k_j}}\to0$, the remainder of the proof follows the argument of Theorem~\ref{thm:convergence}, with $\tilde L$ replaced by $\sigma_{k_j}$. Since $\{\sigma_k\}$ is uniformly bounded above, the additional term $\sigma_{k_j}\eta_{k_j}$ vanishes as $j\to\infty$. Therefore, $x_*$ is a Pareto stationary point.
    \end{proof}
 
\subsection{Iteration complexity analysis}
In this section, we establish the iteration complexity of Algorithm \ref{alg:ts-pg}. Let $\epsilon \in (0, 1)$ be a given tolerance. We define $\|\eta_k\|_{x_k}$ as our measure of Pareto stationarity, and the algorithm terminates when $\|\eta_k\|_{x_k} \le \epsilon$. 
Let $\mathbb{S} := \{ k \in \mathbb{N} \mid \rho_k \ge s_1 \}$ denote the set of successful iterations. We seek to bound the number of successful iterations $\mathbb{S}(\epsilon)$ and unsuccessful iterations $\mathbb{F}(\epsilon)$ performed before the stopping criterion is met.

\begin{lemma}[Complexity of successful iterations]
    Suppose Assumptions \ref{assume1}, \ref{assume2}, and \ref{assume3} hold. If $\{F_i(x_k)\}$ is bounded from below by $F_i^{low}$ for all $i = 1, \dots, m$, then the number of successful iterations is bounded by:
    \begin{equation}
        |\mathbb{S}(\epsilon)| \le \frac{2}{\epsilon^2 s_1 \sigma_{\min}} \min_{i=1,\dots,m} \left( F_i(x_0) - F_i^{low} \right) = \mathcal{O}(\epsilon^{-2}).
    \end{equation}
\end{lemma}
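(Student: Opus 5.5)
The plan is a telescoping argument over successful iterations, in the same spirit as the proof of Theorem~\ref{thm:tr-infinite}. Here $\mathbb{S}(\epsilon)$ is the set of successful iterations $k$ performed before termination, so $\|\eta_k\|_{x_k} > \epsilon$ for each $k \in \mathbb{S}(\epsilon)$.

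\textbf{Step 1: per-iteration decrease.} Fix $k \in \mathbb{S}(\epsilon)$. From the definition \eqref{eq rhok} of $\rho_k$ and the acceptance condition $\rho_k \ge s_1$, I will combine the strong-convexity bound \eqref{eq:predicted-reduction} with $\sigma_k \ge \sigma_{\min}$ to get
\[
\min_{i=1,\dots,m}\{F_i(x_k)-F_i(x_{k+1})\} \ge s_1\bigl(-m_{x_k}(\eta_k,\sigma_k)\bigr) \ge \frac{s_1\sigma_k}{2}\|\eta_k\|_{x_k}^2 \ge \frac{s_1\sigma_{\min}}{2}\epsilon^2 .
\]
This holds for every component $i$ simultaneously.

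The lower bound $\sigma_k \ge \sigma_{\min}$ needs a short justification from the update rule. The very successful branch explicitly keeps $\sigma_{k+1} \ge \sigma_{\min}$. The other two branches never decrease $\sigma$, and $\sigma_0 > \sigma_{\min}$. A one-line induction therefore gives $\sigma_k \ge \sigma_{\min}$ for all $k$.

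\textbf{Step 2: no increase on the remaining iterations.} On unsuccessful iterations $x_{k+1} = x_k$, so $F_i(x_k) - F_i(x_{k+1}) = 0$. On successful iterations the decrease is nonnegative by Step 1. Hence each sequence $\{F_i(x_k)\}$ is nonincreasing.

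\textbf{Step 3: telescoping.} For a fixed $i$, I will sum $F_i(x_k) - F_i(x_{k+1})$ over all iterations up to the last one performed before termination. Discarding the nonnegative contributions of iterations outside $\mathbb{S}(\epsilon)$ gives
\[
F_i(x_0) - F_i^{low} \ge |\mathbb{S}(\epsilon)| \, \frac{s_1\sigma_{\min}\epsilon^2}{2}.
\]
Since this holds for every $i$, taking the minimum over $i$ yields the stated bound, and hence $\mathcal{O}(\epsilon^{-2})$.

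The only mildly delicate point is Step 1, specifically the uniform lower bound $\sigma_k \ge \sigma_{\min}$. Everything else is the same telescoping as in Lemma~\ref{lemma:sq_sum}.
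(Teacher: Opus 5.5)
Your proposal is correct and matches the paper's proof. On each successful iteration you chain $\rho_k\ge s_1$, the strong-convexity bound $-m_{x_k}(\eta_k,\sigma_k)\ge\frac{\sigma_k}{2}\|\eta_k\|_{x_k}^2$, $\sigma_k\ge\sigma_{\min}$ and $\|\eta_k\|_{x_k}>\epsilon$, then sum over $\mathbb{S}(\epsilon)$ and take the minimum over $i$, exactly as the paper does; your induction for $\sigma_k\ge\sigma_{\min}$ and the remark that unsuccessful iterations leave each $F_i$ unchanged are steps the paper uses only implicitly, so spelling them out tightens the telescoping rather than changing the route.
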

\begin{proof}
For any successful iteration $k\in\mathbb S(\epsilon)$,
\[
\min_i\{F_i(x_k)-F_i(x_{k+1})\}
\ge
s_1(-m_{x_k}(\eta_k,\sigma_k))
\ge
\frac{s_1\sigma_{\min}}{2}\|\eta_k\|_{x_k}^2
\ge
\frac{s_1\sigma_{\min}}{2}\epsilon^2.
\]
Hence, for every $i=1,\dots,m$,
\[
F_i(x_k)-F_i(x_{k+1})
\ge
\frac{s_1\sigma_{\min}}{2}\epsilon^2.
\]
Summing over all successful iterations gives
\[
F_i(x_0) - F_i^{low} \ge \sum_{k \in \mathbb{S}(\epsilon)} \left( F_i(x_k) - F_i(x_{k+1}) \right) \ge |\mathbb{S}(\epsilon)| \frac{s_1 \sigma_{\min}}{2} \epsilon^2.
\]
Taking the minimum over $i$ yields the result.
\end{proof}

To bound the unsuccessful iterations, we invoke Proposition~\ref{prop:sigma-bound}, which establishes that $\sigma_k \le \sigma_{\max} := \max\{\sigma_0, \tau_3 \sigma_{\text{succ}}\}$ for all $k$.

\begin{lemma}[Complexity of unsuccessful iterations]
    Under the same assumptions, the number of unsuccessful iterations before reaching $\epsilon$-stationarity satisfies:
    \begin{equation}
        |\mathbb{F}(\epsilon)| \le \log_{\tau_2} \left( \frac{\sigma_{\max}}{\sigma_0} \right) + |\mathbb{S}(\epsilon)| | \log_{\tau_2}(\tau_1) | = \mathcal{O}(\epsilon^{-2}).
    \end{equation}
\end{lemma}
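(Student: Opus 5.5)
The argument is the usual log-potential one for adaptive regularization schemes: track how $\log\sigma_k$ changes, and use that $\sigma_k$ is trapped in $[\sigma_{\min},\sigma_{\max}]$. Proposition~\ref{prop:sigma-bound} supplies the upper bound. The lower bound comes from the update rule itself, since every branch keeps $\sigma_{k+1}\ge\sigma_{\min}$: in the very successful branch this is enforced by the $\max(\sigma_{\min},\tau_1\sigma_k)$, and in the other two branches $\sigma$ does not decrease.

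Fix the index $K$ at which the stopping test $\|\eta_K\|_{x_K}\le\epsilon$ is first met. Split the iterations $k<K$ into $\mathbb S(\epsilon)$ and $\mathbb F(\epsilon)$, and record the per-iteration change of the potential $\log_{\tau_2}\sigma_k$:
\begin{itemize}
\item if $k\in\mathbb F(\epsilon)$, then $\sigma_{k+1}\ge\tau_2\sigma_k$, so the potential increases by at least $1$;
\item if $k$ is successful but not very successful, then $\sigma_{k+1}\ge\sigma_k$, so the increase is at least $0$;
\item if $k$ is very successful, then $\sigma_{k+1}\ge\tau_1\sigma_k$ (the maximum with $\sigma_{\min}$ only helps), so the increase is at least $\log_{\tau_2}\tau_1=-|\log_{\tau_2}\tau_1|$.
\end{itemize}

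Summing these increments from $k=0$ to $K-1$ telescopes to
\[
\log_{\tau_2}\frac{\sigma_K}{\sigma_0}\;\ge\;|\mathbb F(\epsilon)|-|\mathbb S(\epsilon)|\,|\log_{\tau_2}\tau_1|.
\]
Applying $\sigma_K\le\sigma_{\max}$ from Proposition~\ref{prop:sigma-bound} bounds the left side by $\log_{\tau_2}(\sigma_{\max}/\sigma_0)$, which rearranges to the claimed inequality. Substituting the bound $|\mathbb S(\epsilon)|=\mathcal O(\epsilon^{-2})$ from the preceding lemma gives $|\mathbb F(\epsilon)|=\mathcal O(\epsilon^{-2})$, because $\log_{\tau_2}(\sigma_{\max}/\sigma_0)$ and $|\log_{\tau_2}\tau_1|$ are constants independent of $\epsilon$.

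The only step requiring care is the decrease bound in very successful iterations. The lower limit of the interval is $\max(\sigma_{\min},\tau_1\sigma_k)$, which is always at least $\tau_1\sigma_k$, so the decrement is never worse than $|\log_{\tau_2}\tau_1|$. We also do not need $\sigma_{\max}\ge\sigma_0$ to make the first term nonnegative: the telescoping inequality holds as written, and $\sigma_{\max}\ge\sigma_0$ is true anyway by the definition of $\sigma_{\max}$.
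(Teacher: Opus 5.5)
Your proof is correct and is essentially the paper's argument. The paper writes the same bound multiplicatively as $\sigma_{k(\epsilon)}\ge\sigma_0\tau_1^{|\mathbb S(\epsilon)|}\tau_2^{|\mathbb F(\epsilon)|}$ and takes logarithms against $\sigma_{\max}$ from Proposition~\ref{prop:sigma-bound}, which is exactly your telescoped $\log_{\tau_2}\sigma_k$ potential. Splitting off the merely successful iterations is a harmless refinement, since the paper just uses $\sigma_{k+1}\ge\sigma_k\ge\tau_1\sigma_k$ there, and your lower bound $\sigma_k\ge\sigma_{\min}$ is not needed for this lemma.
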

\begin{proof}
By the update rules, we have
\[
\sigma_{k+1}\ge\tau_1\sigma_k,
\qquad k\in\mathbb S(\epsilon),
\]
and
\[
\sigma_{k+1}\ge\tau_2\sigma_k,
\qquad k\in\mathbb F(\epsilon).
\]
Hence,
\[
\sigma_{k(\epsilon)}
\ge
\sigma_0
\tau_1^{|\mathbb S(\epsilon)|}
\tau_2^{|\mathbb F(\epsilon)|}.
\]
Since $\sigma_k\le\sigma_{\max}$ for all $k$,
\[
\sigma_{\max}
\ge
\sigma_0
\tau_1^{|\mathbb S(\epsilon)|}
\tau_2^{|\mathbb F(\epsilon)|}.
\]
Taking logarithms yields
\[
|\mathbb F(\epsilon)|
\le
\log_{\tau_2}\left(\frac{\sigma_{\max}}{\sigma_0}\right)
+
|\mathbb S(\epsilon)|
|\log_{\tau_2}(\tau_1)|.
\]
The conclusion follows from the previous lemma.
\end{proof}

Combining the two lemmas immediately yields the global iteration complexity for the multiobjective trust-region scheme.
\begin{theorem}
    Suppose Assumptions \ref{assume1}, \ref{assume2}, and \ref{assume3} hold. Algorithm \ref{alg:ts-pg} requires at most $\mathcal{O}(\epsilon^{-2})$ total iterations to return an iterate satisfying the Pareto stationarity measure $\|\eta_k\|_{x_k} \le \epsilon$.
\end{theorem}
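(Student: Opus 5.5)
The plan is to count the iterations performed before the first index $k(\epsilon)$ with $\|\eta_k\|_{x_k}\le\epsilon$. Every iteration $k<k(\epsilon)$ is either successful ($k\in\mathbb S(\epsilon)$) or unsuccessful ($k\in\mathbb F(\epsilon)$). The total is therefore $k(\epsilon)=|\mathbb S(\epsilon)|+|\mathbb F(\epsilon)|$, and I will bound each term with the two preceding lemmas.

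First I fix the lower bounds $F_i^{low}$ required by the successful-iteration lemma. By Lemma~\ref{descent}-type reasoning, or directly from $\rho_k\ge s_1>0$ on successful steps, each $F_i(x_k)$ is nonincreasing along the iterates. Hence $x_k\in\omega_{x_0}$ for all $k$. Assumption~\ref{assume1} then gives that $F$ is bounded below, so $F_i^{low}:=\inf_{x\in\omega_{x_0}}F_i(x)>-\infty$ is a valid choice. With this, the successful-iteration lemma yields
\[
|\mathbb S(\epsilon)|\le \frac{2}{\epsilon^2 s_1\sigma_{\min}}\min_{i}\bigl(F_i(x_0)-F_i^{low}\bigr).
\]

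Second, I bound the unsuccessful iterations. This uses Proposition~\ref{prop:sigma-bound}, which gives $\sigma_k\le\sigma_{\max}$ with $\sigma_{\max}$ independent of $\epsilon$, together with the unsuccessful-iteration lemma:
\[
|\mathbb F(\epsilon)|\le\log_{\tau_2}(\sigma_{\max}/\sigma_0)+|\mathbb S(\epsilon)|\,|\log_{\tau_2}\tau_1|.
\]

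Adding the two bounds gives
\[
k(\epsilon)\le \bigl(1+|\log_{\tau_2}\tau_1|\bigr)\frac{2\min_i(F_i(x_0)-F_i^{low})}{s_1\sigma_{\min}\epsilon^2}+\log_{\tau_2}\frac{\sigma_{\max}}{\sigma_0}.
\]
Since $\epsilon<1$, the constant term is absorbed, and the whole bound is $\mathcal O(\epsilon^{-2})$.

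The only delicate point is the multiplicative bookkeeping in the unsuccessful-iteration lemma. On very successful steps $\sigma$ may decrease, but only by a factor of at least $\tau_1$; on successful steps it does not decrease; on unsuccessful steps it increases by at least $\tau_2$. Combined with the clamp at $\sigma_{\min}$, this gives $\sigma_{k+1}\ge\tau_1\sigma_k$ on every successful iteration, so the product estimate is valid. I would check this explicitly against the update rule. I would also note that if $\eta_k=0$ occurs, the algorithm stops at a Pareto stationary point by Lemma~\ref{lemma: P-sta}, and in that case the criterion is already met.
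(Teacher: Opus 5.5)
Your proposal is correct and is essentially the paper's proof, which simply combines the successful-iteration bound $|\mathbb{S}(\epsilon)|=\mathcal{O}(\epsilon^{-2})$ with the unsuccessful-iteration bound derived from $\sigma_0\tau_1^{|\mathbb{S}(\epsilon)|}\tau_2^{|\mathbb{F}(\epsilon)|}\le\sigma_{k(\epsilon)}\le\sigma_{\max}$ and Proposition~\ref{prop:sigma-bound}. Your extra checks are the details the paper leaves implicit: monotonicity of each $F_i$ from $\rho_k\ge s_1$ to justify $F_i^{low}$, and $\sigma_{k+1}\ge\tau_1\sigma_k$ on every successful step. One small point: apply the same count to any horizon $K$ with $\|\eta_k\|_{x_k}>\epsilon$ for all $k<K$, which shows that $k(\epsilon)$ exists rather than presupposing it.
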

\begin{remark}
The trust-region ratio in \eqref{eq rhok} employs the full regularized model reduction
$
-m_{x_k}(\eta_k,\sigma_k)
=
-\psi_{x_k}(\eta_k)
-\frac{\sigma_k}{2}\|\eta_k\|_{x_k}^2
$
in the denominator, following the classical trust-region framework.
Alternatively, inspired by Zhao et al.~\cite{zhao_proximal_2023}, one may define the ratio using only the first-order model reduction:
\[
\widehat{\rho}_k
=
\frac{
\min_i\{F_i(x_k)-F_i(R_{x_k}(\eta_k))\}
}{
-\psi_{x_k}(\eta_k)
}.
\]
Using
$
-\psi_{x_k}(\eta_k)
\ge
\frac{\sigma_k}{2}\|\eta_k\|_{x_k}^2,
$
one obtains the same lower bound
$
\widehat{\rho}_k
\ge
1-\frac{L}{\sigma_k},
$
and therefore the same global convergence and iteration complexity results.
However, the present formulation based on the full model reduction aligns more naturally with the classical trust-region philosophy and leads to a slightly simpler analysis in the multiobjective setting.
\end{remark}

\section{Numerical experiments}
\label{sec: expreriment}

All experiments were implemented in Python~3.12. on a Macmini equipped with an Apple M4 chip and 16\,GB of RAM, running macOS~15. 
We evaluate the three RMPGM variants together with the Riemannian Multiobjective Steepest Descent (RMSD) baseline~\cite{fliege2000steepest} with subgradient.

% \subsection{Experiment 1: Bi-Objective Sparse Signal Recovery on the Sphere}

In applications such as robust sensor localization and multimodal signal recovery, one often seeks a sparse signal that simultaneously fits multiple measurement models. Let $A_1,A_2 \in \mathbb{R}^{m\times n}$ be two measurement matrices with corresponding noisy observations $b_1,b_2 \in \mathbb{R}^m$.

We formulate the bi-objective sparse signal recovery problem as follows
\begin{equation}
\label{eq:biobjective_sphere}
\min _{x \in \mathbb{S}^{n-1}} \left( \frac{1}{2}\left\|A_1 x - b_1\right\|_2^2 + \lambda_1\|x\|_1, \quad \frac{1}{2}\left\|A_2 x - b_2\right\|_2^2 + \lambda_2\|x\|_1 \right),
\end{equation}
where $\mathbb{S}^{n-1}=\left\{x \in \mathbb{R}^{n} \mid \|x\|_2=1\right\}$ denotes the unit sphere manifold. Here, $f_1(x) = \frac{1}{2}\|A_1 x - b_1\|_2^2$ and $f_2(x) = \frac{1}{2}\|A_2 x - b_2\|_2^2$ are smooth data fidelity functions, and the nonsmooth terms $g_1(x) = \lambda_1\|x\|_1$ and $g_2(x) = \lambda_2\|x\|_1$ enforce sparsity on the recovered signal.

\textbf{Experimental Setup.} 
We considered three problem sizes $(n,m)\in\{(128,50),(256,100),(512,200)\}$ with sparsity level $5\%$. To induce a clear conflict between objectives, the ground-truth signals $x_1^*$ and $x_2^*$ were generated with disjoint supports. The matrices $A_1,A_2$ were sampled from $\mathcal{N}(0,1/\sqrt{m})$, and observations were generated by $b_i=A_i x_i^*+\epsilon_i$ with small Gaussian noise. We set $\lambda_1=\lambda_2=0.05$.
All methods used $\mathrm{max\_iter}=500$ and tolerance $10^{-4}$. RMPGM and Inexact-RMPGM used $\tilde{L}=L$, while TR-RMPGM used $\sigma_0=L$. The inexact variant adopted $\varepsilon_k=1/(k+1)$. For TR-RMPGM, we set $s_1=0.1$, $s_2=0.75$, $\tau_1=0.5$, $\tau_2=2$, $\tau_3=4$, and $\sigma_{\min}=10^{-6}$. RMSD used the diminishing step size $\alpha_k=1/\sqrt{k+1}$.

% \textbf{Pareto Front Tracing Strategy.} 
% The Pareto front was traced by varying the initialization
% \[
% x_0=\mathrm{Proj}_{\mathbb{S}^{n-1}}
% \bigl(\gamma x_1^*+(1-\gamma)x_2^*+\xi\bigr),
% \]
% where $\gamma\in[0,1]$ controls the initialization bias and $\xi\sim\mathcal{N}(0,0.05^2I)$ is a small perturbation. Ten uniformly spaced values of $\gamma$ were tested. We compared the three RMPGM variants with the Riemannian Multiobjective Steepest Descent (RMSD) method~\cite{fliege2000steepest} using subgradients.

\begin{figure}[htpb]
    \centering
    \includegraphics[width=0.5\textwidth]{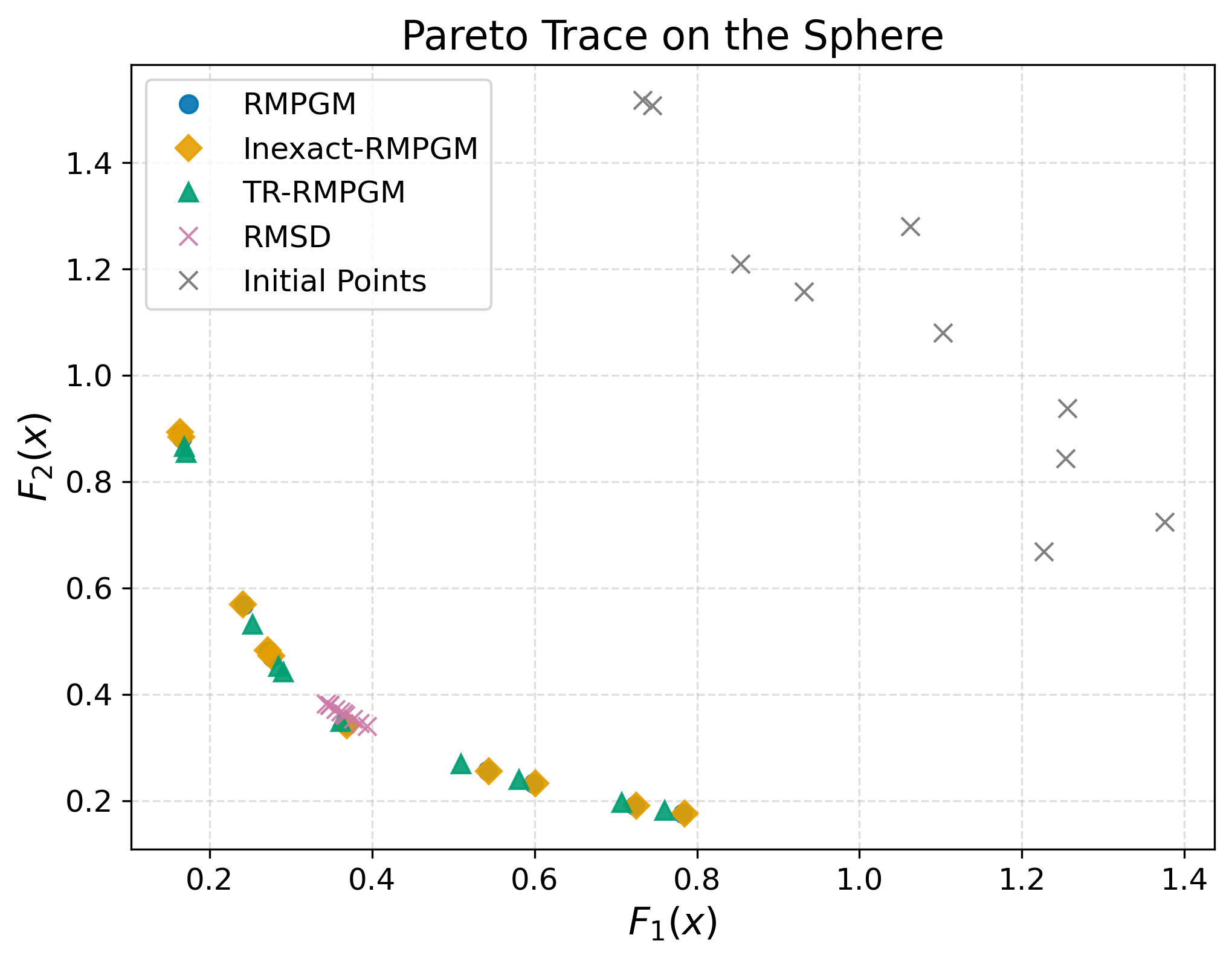}
    \caption{Pareto front obtained for the Bi-Objective Sparse Signal Recovery problem on the sphere with $n=256$ and $m=100$. }
    \label{fig:pareto_front}
\end{figure}

% \begin{figure}[htpb]
%     \centering
%     \includegraphics[width=0.5\textwidth]{Sphere_convergence.png}
%     \caption{Convergence behavior of the optimality measure for Experiment~1. }
%     \label{fig:convergence}
% \end{figure}

% \begin{figure}[htpb]
%     \centering
%     \includegraphics[width=0.5\textwidth]{Sphere_time_comparison.png}
%     \caption{Optimality measure versus wall-clock time for Experiment~1. }
%     \label{fig:time_comparison}
% \end{figure}

\begin{figure}[htpb]
    \centering
    \begin{minipage}{0.48\textwidth}
        \centering
        \includegraphics[width=\textwidth]{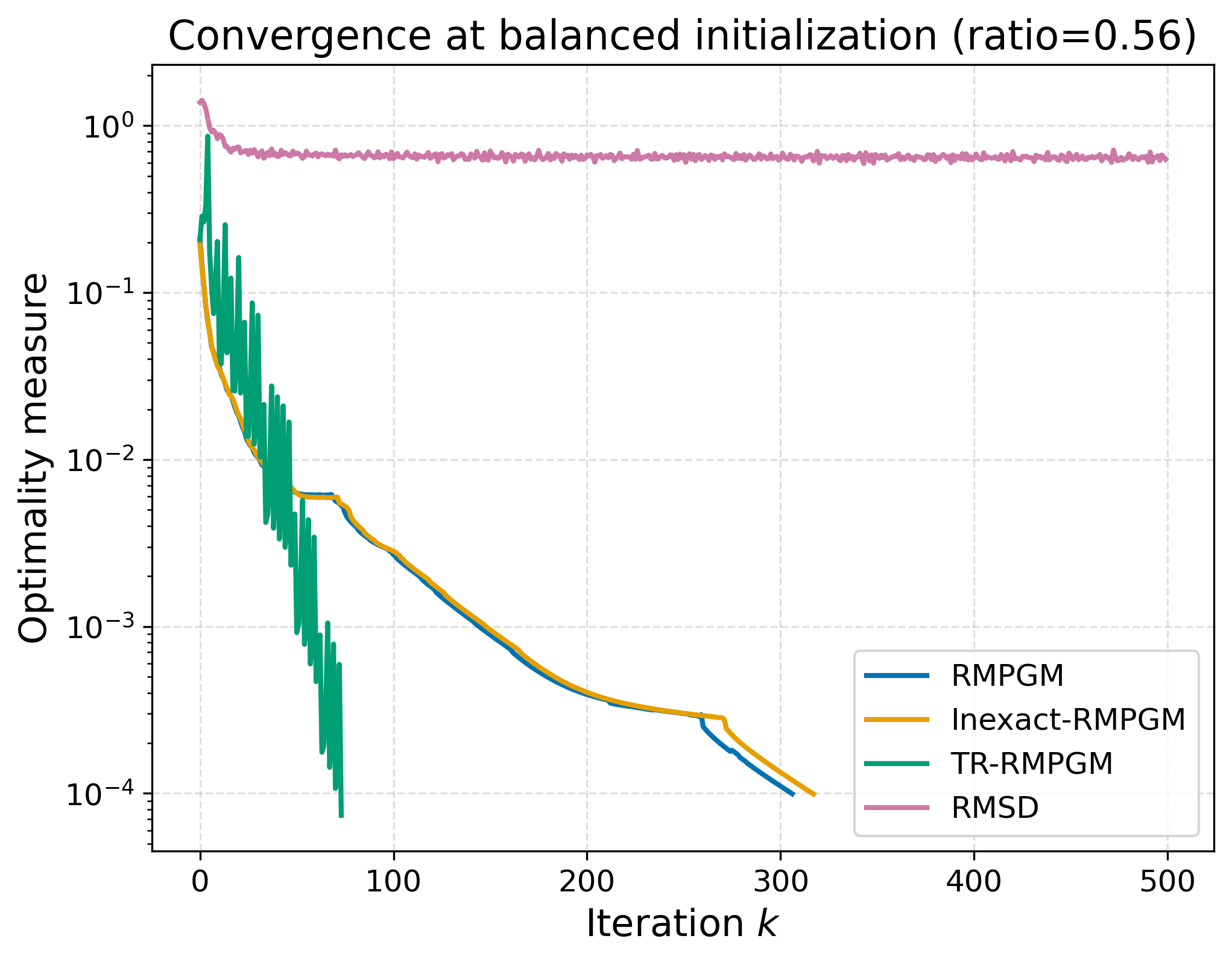}
        \caption{Convergence behavior of the optimality measure for Experiment~1 with $n=256$ and $m=100$.}
        \label{fig:convergence}
    \end{minipage}
    \hfill
    \begin{minipage}{0.48\textwidth}
        \centering
        \includegraphics[width=\textwidth]{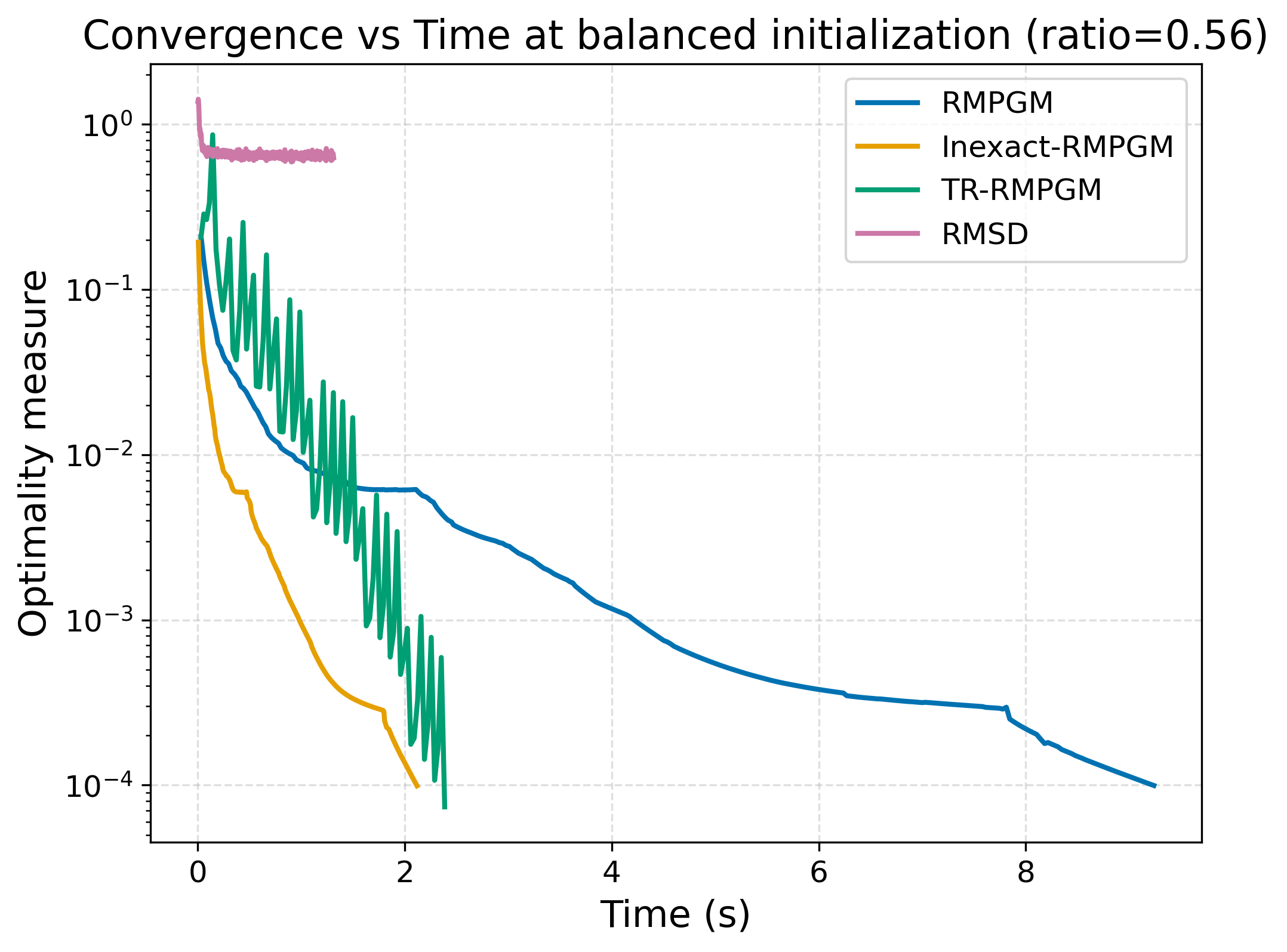}
        \caption{Optimality measure versus wall-clock time for Experiment~1 with $n=256$ and $m=100$.}
        \label{fig:time_comparison}
    \end{minipage}
\end{figure}

\begin{table}[htbp]
\centering
\small
\caption{Summary of Experiment~1 (Sphere) across different problem sizes.}
\label{tab:summary_sphere}

\begin{tabular}{lcccccc}
\hline
\textbf{Method}
& \multicolumn{2}{c}{$n=128,\ m=50$}
& \multicolumn{2}{c}{$n=256,\ m=100$}
& \multicolumn{2}{c}{$n=512,\ m=200$} \\

% \hline

& \textbf{Iters} & \textbf{Time(s)}
& \textbf{Iters} & \textbf{Time(s)}
& \textbf{Iters} & \textbf{Time(s)} \\

\hline

RMPGM & 250.1 & 6.260 & 196.5 & 7.752 & 357.7 & 20.620 \\

Inexact-RMPGM & 237.4 & 1.725 & 195.4 & 2.048 & 353.9 & 5.957 \\

TR-RMPGM & 76.4 & 1.506 & 56.5 & 1.742 & 56.6 & 2.778 \\

RMSD & 500.0 & 1.356 & 500.0 & 1.495 & 500.0 & 1.837 \\

\hline
\end{tabular}
\end{table}

\textbf{Results and Discussion.} 
Figure~\ref{fig:pareto_front} shows that the three RMPGM variants successfully handle both the sphere geometry and the nonsmooth $\ell_1$ regularization, producing well-distributed Pareto fronts across all problem sizes. In contrast, the RMSD baseline fails to approach the Pareto front and stagnates above the prescribed tolerance. As further shown in Table~\ref{tab:summary_sphere}, RMSD consistently exhausts the iteration budget, whereas all RMPGM variants converge reliably, highlighting the advantage of the proximal framework for nonsmooth multiobjective manifold optimization.
The table also indicates that Inexact-RMPGM requires a similar number of outer iterations to RMPGM, while achieving substantially lower computational time due to the reduced cost of solving the proximal subproblems inexactly.

Figures~\ref{fig:convergence} and~\ref{fig:time_comparison} illustrate the convergence behavior and runtime performance for the intermediate problem size $(n,m)=(256,100)$. RMPGM exhibits stable monotone descent, while TR-RMPGM converges substantially faster through adaptive updates of the penalty parameter $\sigma_k$. Inexact-RMPGM achieves convergence behavior comparable to RMPGM with reduced computational cost due to approximate subproblem solves. Overall, TR-RMPGM requires the fewest iterations and achieves the best runtime performance across all tested configurations.

\section{Conclusion}
\label{sec: Conclusion}
% We developed RMPGM, a first-order proximal gradient framework for composite multiobjective optimization on Riemannian manifolds. Global convergence to Pareto stationary points was established for general retractions, and an $\mathcal{O}(1/k)$ rate was proved. An inexact variant preserving global convergence and a trust-region variant that adapts the penalty parameter without prior knowledge of the Lipschitz constant were also analyzed. Future work includes local convergence analysis under the Riemannian KL property, sharper complexity results for the trust-region scheme, and extending the rate analysis to broader classes of retractions.

We developed RMPGM, a first-order proximal gradient framework for composite multiobjective optimization on Riemannian manifolds. Global convergence to Pareto stationary points was established under general retraction mappings, together with an $\mathcal{O}(1/k)$ convergence rate. An inexact variant preserving global convergence guarantees and a trust-region variant that adaptively adjusts the penalty parameter without requiring prior knowledge of the Lipschitz constant were also developed and analyzed.
Numerical results demonstrate that the proposed methods are robust and efficient. In particular, the trust-region variant consistently improves practical performance in terms of iteration and computational time, while maintaining stable convergence behavior.
Future work includes local convergence analysis under the Riemannian Kurdyka--Łojasiewicz property, sharper complexity bounds for the trust-region scheme, and extending the analysis to broader classes of retractions and more general nonsmooth structures.

\backmatter

\bmhead{Acknowledgments}
The author thanks Shiming Zhao and Tao Yan for helpful discussions on trust-region methods and for sharing reference code for single-objective optimization algorithms.

\section*{Declarations}
% \textbf{Funding} This work was supported by JST, the establishment of university fellowships towards the creation of science technology innovation (JPMJFS2123), and Japan Society for the Promotion of Science, Grant-in-Aid for Scientific Research (C) (19K11840) and Grant-in-Aid for Early-Career Scientists (20K14359).

\textbf{Conflict of interest} The author declares no competing interests.

% \textbf{Code availability} The code will be made available upon acceptance.

\bibliography{reference}

\end{document}